\renewcommand*{\baselinestretch}{1.2}
\newtheorem{theorem}{Theorem}[section]
\newtheorem{lemma}{Lemma}[section]
\newtheorem{proposition}{Proposition}[section]
\newtheorem{corollary}{Corollary}[section]
\theoremstyle{definition}
\newtheorem*{rmk*}{Remark}
\newtheorem{rmk}{Remark}[section]
\DeclareMathOperator{\variance}{Var}
\DeclareMathOperator{\trace}{tr}
\DeclareMathOperator{\diag}{diag}
\DeclareMathOperator{\E}{E}
\DeclareMathOperator{\rank}{rank}
\numberwithin{equation}{section}
    \renewcommand*{\section}{\@startsection{section}{1}{\z@}%
    {6pt}{3pt}{\reset@font\normalsize\bfseries}}
    \renewcommand*{\subsection}{\@startsection{subsection}{2}{\z@}%
    {3pt}{3pt}{\reset@font\normalsize\mdseries\itshape}}
    \renewcommand*{\subsubsection}{\@startsection{subsubsection}{3}{\z@}%
    {3pt}{3pt}{\reset@font\normalsize\mdseries\itshape}}
\def\@seccntformat#1{\csname the#1\endcsname.\quad}
\def\@listi{\leftmargin\leftmargini
  \topsep=.5\baselineskip 
  \partopsep=0pt \parsep=0pt \itemsep=0pt}
\let\@listI\@listi
\def\@listii{\leftmargin\leftmarginii
  \labelwidth\leftmarginii \advance\labelwidth-\labelsep
  \topsep=0pt \partopsep=0pt \parsep=0pt \itemsep=0pt}
\def\@listiii{\leftmargin\leftmarginiii
  \labelwidth\leftmarginiii \advance\labelwidth-\labelsep
  \topsep=0pt \partopsep=0pt \parsep=0pt \itemsep=0pt}
\def\@listiv{\leftmargin\leftmarginiv
  \labelwidth\leftmarginiv \advance\labelwidth-\labelsep
  \topsep=0pt \partopsep=0pt \parsep=0pt \itemsep=0pt}
\newcommand{\opnorm}{\@ifstar\@opnorms\@opnorm}
\newcommand{\@opnorms}[1]{%
  \left|\mkern-1.5mu\left|\mkern-1.5mu\left|
   #1
  \right|\mkern-1.5mu\right|\mkern-1.5mu\right|
}
\newcommand{\@opnorm}[2][]{%
  \mathopen{#1|\mkern-1.5mu#1|\mkern-1.5mu#1|}
  #2
  \mathclose{#1|\mkern-1.5mu#1|\mkern-1.5mu#1|}
}
\def\be#1{\begin{equation*}#1\end{equation*}}
\def\ben#1{\begin{equation}#1\end{equation}}
\def\besn#1{\begin{equation}\begin{split}#1\end{split}\end{equation}}
\def\bm#1{\begin{multline*}#1\end{multline*}}
\def\ba#1{\begin{align*}#1\end{align*}}
\def\ban#1{\begin{align}#1\end{align}}
\newcommand{\lpa}{\left(}
\newcommand{\rpa}{\right)}
\newcommand{\eps}{\varepsilon}
\newcommand{\sch}{\mathcal S}
\newcommand{\levy}{\lambda}
\newcommand{\bs}[1]{\boldsymbol{#1}}
\newcommand{\ol}[1]{\overline{#1}}
\newcommand{\ul}[1]{\underline{#1}}
\newcommand{\wt}[1]{\widetilde{#1}}
\newcommand{\mf}[1]{\mathfrak{#1}}
\newcommand{\mcl}[1]{\mathcal{#1}}
\newcommand{\expe}[1]{\mathrm{E}[#1]}
\newcommand{\norm}[1]{\left\|#1\right\|}
\newcommand{\bra}[1]{\left(#1\right)}
\newcommand{\cbra}[1]{\left\{#1\right\}}
\newcommand{\sbra}[1]{\left[#1\right]}
\newcommand{\abs}[1]{\left|#1\right|}
\newcommand{\pushright}[1]{\ifmeasuring@#1\else\omit\hfill$\displaystyle#1$\fi\ignorespaces}
\newcommand{\pushleft}[1]{\ifmeasuring@#1\else\omit$\displaystyle#1$\hfill\fi\ignorespaces}
\title{Spectral norm bounds for high-dimensional realized covariance matrices and application to weak factor models}
\author{Yuta Koike
\thanks{Graduate School of Mathematical Sciences, University of Tokyo}
\thanks{CREST, Japan Science and Technology Agency}
}
\begin{document}

\maketitle

\begin{abstract}

Motivated by statistical analysis of latent factor models for high-frequency financial data, we develop  sharp upper bounds for the spectral norm of the realized covariance matrix of a high-dimensional It\^o semimartingale with possibly infinite activity jumps. 
For this purpose, we develop Burkholder--Gundy type inequalities for matrix martingales with the help of the theory of non-commutative $L^p$ spaces. 
The obtained bounds are applied to estimating the number of (relevant) common factors in a continuous-time latent factor model from high-frequency data in the presence of weak factors. 
\vspace{3mm}

\noindent \textit{Keywords}: high-frequency data; infinite activity jumps; non-commutative $L^p$ space; number of factors. 

\end{abstract}

\section{Introduction}\label{sec:intro}

Given a stochastic basis $\mf B=(\Omega,\mcl F,(\mcl F_t)_{t\in[0,1]},P)$, we consider a $d$-dimensional semimartingale $Z=(Z_t)_{t\in[0,1]}$ defined on $\mf B$. The aim of this paper is to develop an upper bound for the spectral norm of the realized covariance matrix of $Z$,
\[
{[Z,Z]}^n_1=\sum_{i=1}^n(Z_{i/n}-Z_{(i-1)/n})(Z_{i/n}-Z_{(i-1)/n})^\top,
\]
when the dimension $d$ is comparable to or larger than $n$. 
This is motivated by the following statistical problem in high-frequency financial econometrics. 
Let us consider the situation where the log-price process of $d$ assets $Y=(Y_t)_{t\in[0,1]}$ is modeled as the following continuous-time latent factor model:
\ben{\label{eq:factor-model}
Y_t=\beta F_t+Z_t,
}
where $\beta$ is a non-random $d\times r$ matrix consisting of factor loadings and $F=(F_t)_{t\in[0,1]}$ is an $r$-dimensional semimartingale consisting of factor processes, with $r$ assumed to be much smaller than $d$. The process $Z$ corresponds to the idiosyncratic component. 
We observe the process $Y$ at equidistant times $t_i=i/n$, $i=0,1,\dots,n$. 
Then, we are interested in extracting information about the systematic component, $\beta F_t$, from the observation data $(Y_{t_i})_{i=0}^n$. 
Principal component analysis (PCA) is a standard tool to accomplish this purpose. 
This is based on the idea that the first $r$ largest eigenvalues of ${[Y,Y]}^n_1$ is inflated by those of $\beta{[F,F]}^n_1\beta^\top$ while the remaining ones are dominated by $\|{[Z,Z]}^n_1\|$, the spectral norm of ${[Z,Z]}^n_1$. 
Therefore, we could extract information about the systematic component as long as $\|{[Z,Z]}^n_1\|$ has a moderate size compared to eigenvalues of $\beta{[F,F]}^n_1\beta^\top$. 
In econometrics, it has been conventionally assumed that factors are pervasive in the sense that all the eigenvalues of $\beta{[F,F]}^n_1\beta^\top$ have the same order as the number of assets $d$. 
However, researchers have increasingly recognized that it is more realistic to consider the situation that some eigenvalues of $\beta{[F,F]}^n_1\beta^\top$ diverge at slower rates than $d$. 
Factor models allowing such a situation are referred to as \textit{weak factor models} and have been actively studied in recent years. 
For backgrounds and recent developments in this area, see \cite{BaNg23,Fr22,UeYa23a,UeYa23b,CoKo22,GXZ21} and references therein. 
Also, we refer to \cite{Tr86}, \cite[Section 6.4]{UeYa23a} and \cite[Section 5]{CNP23} for empirical evidence of weak factors in financial data. 
For weak factor models, it becomes more important to establish a sharper bound for $\|{[Z,Z]}^n_1\|$ in order to justify PCA based estimation. 
In view of Bai--Yin's law (cf.~\cite[Theorem 5.31]{Ve12}), we attempt to establish a bound of order $O_p(1+d/n)$ for $\|{[Z,Z]}^n_1\|$ without any extra structural conditions on the cross-sectional dependence of $Z$.

Statistical analysis of a high-dimensional continuous-time latent factor model using high-frequency data has been extensively studied in recent years. 
\cite{AX2017,DLX2019,FK2017} are examples where a continuous-time latent factor model is analyzed in the context of estimating high-dimensional quadratic covariation matrices. 
Estimation for the systematic component of a high-dimensional continuous-time model has been developed in, among others, \cite{Pe19,SXZ23,KLZ19,LCL23} for the constant loading case and \cite{Kong17,Kong18,KLLL23,CMZ20} for the general case. 
To the author's knowledge, however, none of them has formally considered weak factors in the above sense. We remark that \citet{LCL23} consider a different type of weak factors that are caused by the difference between the magnitudes of efficient log-prices and microstructure noise. 
Besides, jumps have been rarely incorporated into the model, especially for the idiosyncratic process where the high-dimensionality particularly matters. 
Two exceptions are the work by \citet{Pe19} and \citet{SXZ23}, both of which allow for finite activity jumps in the idiosyncratic process. 
By contrast, we allow the idiosyncratic process to have infinite activity jumps with possibly infinite variation. 
This is empirically important since there is some evidence that a significant number of individual stocks have infinite activity idiosyncratic jumps; see \cite[pp.~410--412 and 435--437]{AJ2014}.

For a discrete-time model, spectral distribution theory of random matrices is a traditional tool to derive a spectral norm bound for the empirical covariance matrix of the idiosyncratic process; see e.g.~\cite{BaNg06,On10,AhHo13}. 
However, for a continuous-time model observed on a fixed time interval, the corresponding theory is available only under rather restrictive assumptions; see \cite{ZL2011,HP2014}. 
\citet{MoWe17} have developed an alternative way to obtain such bounds for various discrete-time models using \citet{La05}'s inequality for the spectral norm of a random matrix with independent entries, but it is again non-trivial to extend this approach to the continuous-time setting. 
For this reason, to establish bounds for $\|{[Z,Z]}^n_1\|$, most articles rely on bounding the spectral norm by the Frobenius norm or the $\ell^\infty$-operator norm; see the proofs of \cite[Theorem 2]{AX2017}, \cite[Lemma A.1]{Kong17}, \cite[Lemma P.2]{Pe19}, \cite[Lemma 1]{SXZ23} and \cite[Lemma A.1]{LCL23} for instance. 
This scheme yields only coarse bounds of order $O_p(1+d/\sqrt n)$ unless we impose a non-trivial restriction on the cross-sectional dependence of $Z$.  
One exception is the work by \citet{CNP23}, where a Bernstein type inequality for matrix martingales has been developed to tackle the problem. While their approach gives an almost satisfactory bound for $\|{[Z,Z]}^n_1\|$ when $Z$ is continuous, this is not the case when $Z$ has jumps. 
The main aim of this paper is to fill in this gap. 


In view of the decomposition
\[
{[Z,Z]}^n_1-[Z,Z]_1=\sum_{i=1}^n\{(\Delta^n_iZ)(\Delta^n_iZ)^\top-\Delta^n_i[Z,Z]\},
\]
where $\Delta^n_iZ=Z_{t_i}-Z_{t_{i-1}}$ and $\Delta^n_i[Z,Z]=[Z,Z]_{t_i}-[Z,Z]_{t_{i-1}}$, our problem can be classified into the problem of bounding the spectral norm of a sum of random matrices, and this problem has been extensively studied in the literature. We refer to \cite{Ve12,vH17,Tr15} for an overview of this topic. In this context, roughly speaking, a major difficulty specific to our problem is caused by the fact that the magnitude of the summands
\[
\max_{i=1,\dots,n}\|(\Delta^n_iZ)(\Delta^n_iZ)^\top-\Delta^n_i[Z,Z]\|
\]
does not vanish as $n\to\infty$ due to the presence of infinite activity jumps. 
This fact seems to prevent us from using most standard techniques available in the literature. 
In particular, it is unclear whether $[Z,Z]^n_1$ concentrates around $[Z,Z]_1$ in the presence of infinite activity jumps. 
Instead, we directly bound $\|[Z,Z]^n_1\|$ by revisiting the classical approach of \citet{Ru99}. 
Since \citet{Ru99}'s original proof essentially uses the independence of summands, we resolve this issue by developing Burkholder--Gundy type inequalities for matrix martingales with the help of the theory of non-commutative $L^p$ spaces. 
A special case of these inequalities follows from the Burkholder--Gundy inequalities for non-commutative martingales but this is not the case in general (see Remark \ref{rem:BG}), so they would be of independent interest. 

This paper is organized as follows. 
Section \ref{sec:main} presents the main theoretical results of the paper. 
In Section \ref{sec:factor}, we apply the developed theory to determine the number of relevant factors from high-frequency data. 
Section \ref{sec:simulation} conducts a simulation study.
Mathematical proofs are given in Section \ref{sec:proof}. 
In the Appendix, we give additional proofs and simulation results. 
\vspace{-2mm}

\paragraph{Notation} 
Throughout the paper, we assume $d\geq3$. 
For a vector $x\in\mathbb R^n$, $x^j$ denotes the $j$-th component of $x$. 
Also, we write $|x|=\sqrt{\sum_{j=1}^n(x^j)^2}$ and $|x|_\infty=\max_{j=1,\dots,n}|x^j|$. 
$\mathbb R^{m\times n}$ denotes the set of $m\times n$ (real) matrices. 
For a matrix $A\in\mathbb R^{m\times n}$, $A^{jk}$ denotes the $(j,k)$-th entry of $A$. 
Also, we write $\|A\|$ and $\|A\|_F$ for the spectral norm and the Frobenius norm of $A$, respectively. 
If $m=n$, $\diag(A)$ denotes the diagonal matrix with the same diagonal entries as $A$. 
If $A$ is symmetric, we denote by $\lambda_1(A)\geq\cdots\geq\lambda_m(A)$ the eigenvalues of $A$. 
Given two sequences of non-negative numbers $a_n$ and $b_n$, we write $a_n\asymp b_n$ if $a_n=O(b_n)$ and $b_n=O(a_n)$ as $n\to\infty$. 
Similarly, given two sequences of non-negative random variables $\xi_n$ and $\eta_n$, we write $\xi_n\asymp_p \eta_n$ if $\xi_n=O_p(\eta_n)$ and $\eta_n=O_p(\xi_n)$ as $n\to\infty$.

\section{Spectral norm bounds for high-dimensional realized covariance matrices}\label{sec:main}

We assume that $Z$ is a $d$-dimensional It\^o semimartingale with the following Grigelionis decomposition:
\begin{equation}\label{eq:ito}
Z_t=Z_0+\int_0^tb_sds+\int_0^t\sigma_sdW_s
+\kappa(\delta)\star(\mu-\nu)_t
+\kappa'(\delta)\star\mu_t,
\end{equation}
where $W=(W_s)_{s\in[0,1]}$ is a $d'$-dimensional standard Wiener process, $\mu$ is a Poisson random measure on $[0,1]\times E$ with $E$ a Polish space, $\nu$ is the intensity measure of $\mu$ of the form $\nu(dt,dz)=dt\otimes\lambda(dz)$ with $\lambda$ a $\sigma$-finite measure on $E$, $b$ is a progressively measurable $\mathbb{R}^d$-valued process, $\sigma$ is a progressively measurable $\mathbb{R}^{d\times d'}$-valued process, $\delta$ is a predictable $\mathbb{R}^d$-valued function on $\Omega\times[0,1]\times E$, and $\kappa(z)=(z^11_{\{|z^1|\leq1\}},\dots,z^d1_{\{|z^d|\leq1\}})^\top$ and $\kappa'(z)=z-\kappa(z)$ for $z\in\mathbb R^d$. 
Also, $\star$ denotes the integral (either stochastic or ordinary) with respect to some (integer-valued) random measure. 
Here and below we use standard concepts and notation in stochastic calculus, which are described in detail in e.g.~Chapter 2 of \cite{JP2012}. 
Also, we denote by $\delta_j$ the $j$-th component function of $\delta$. 
\begin{rmk}[Choice of the truncation function]
A standard Grigelionis decomposition in the literature uses the function $x\mapsto x1_{\{|x|\leq1\}}$ to separate ``big'' and ``small'' jumps (cf.~Eq.(2.1.30) of \cite{JP2012}). 
We can rewrite our decomposition \eqref{eq:ito} in this standard form as
\[
Z_t=Z_0+\int_0^tb'_sds+\int_0^t\sigma_sdW_s
+(\delta1_{\{|\delta|\leq1\}})\star(\mu-\nu)_t
+(\delta1_{\{|\delta|>1\}})\star\mu_t,
\]
where $b'_s=b_s+\int_E\kappa(\delta(s,z))1_{\{|\delta(s,z)|>1\}}\levy(dz)$. 
However, we prefer starting from \eqref{eq:ito} in order to avoid assuming (local) boundedness for $\max_j\int_E|\delta_j(s,z)|1_{\{|\delta(s,z)|>1\}}\levy(dz)$: 
If we start from the standard one, such an assumption seems inevitable when applying the standard argument of compensating the big jump term to create a martingale term. 
Nevertheless, this assumption seems inappropriate in the high-dimensional setting because the event $\{|\delta(s,z)|>1\}$ can be caused by many small jumps in the cross-section. 
As an illustration, suppose that the jump part of $Z$ is given by a compound Poisson process as
\[
Z_t=Z_0+\int_0^tb_sds+\int_0^t\sigma_sdW_s+Z^J_t\quad\text{with}~~Z^J_t=\frac{2}{\sqrt d}\sum_{k=1}^{N_t}\epsilon_k,
\]
where $N=(N_t)_{t\in[0,1]}$ is a Poisson process with intensity $d$ and $(\epsilon_k)_{k=1}^\infty$ is a sequence of i.i.d.~random vectors in $\mathbb R^d$ such that the components of $\epsilon_k$ are i.i.d.~Rademacher variables. In this case, we always have $|\delta(s,z)|>1$ as soon as $N$ jumps at $s$, so $\max_j\int_E|\delta_j(s,z)|1_{\{|\delta(s,z)|>1\}}\levy(dz)=2\sqrt d\to\infty$ as $d\to\infty$. 
We note that in this example the jump term is already compensated, so the afore-mentioned argument is indeed unnecessary for this specific situation. 
Another remark is that we have $\|[Z^J,Z^J]_1\|=O_p(1)$ as $d\to\infty$ in this example because $\E[[Z^J,Z^J]_1]=4I_d$ and $\|[Z^J,Z^J]_1-\E[[Z^J,Z^J]_1]\|=O_p(1)$ by Eq.(5.23) in \cite{Ve12}. 
\end{rmk}

We first develop non-asymptotic bounds for $\|{[Z,Z]}^n_1\|$. 
For this purpose, we impose the following boundedness condition on the coefficient processes and spectral norm of the quadratic covariation matrix of $Z$: 
\begin{enumerate}[label={\normalfont[SH]}]

\item \label{ass:SH} There are constants $\Lambda\geq1$ and Borel functions $\gamma_j:E\to[0,\infty)$ $(j=1,\dots,d)$ such that
\ba{
&|b_s(\omega)|_\infty^2\leq\Lambda,\qquad
\|\sigma_s(\omega)\|^2\leq\Lambda,\qquad
|\delta_j(\omega,s,z)|\wedge1\leq\gamma_j(z),\\
&\int_E\gamma_j(z)^2\levy(dz)<\infty,\qquad
\|[Z,Z]_1(\omega)\|\leq \Lambda 
}
for all $\omega\in\Omega$, $s\in[0,1]$, $z\in E$ and $j=1,\dots,d$. 
%
%
%
%
%
\end{enumerate}
We emphasize that Assumption \ref{ass:SH} does not impose any special restriction on the cross-sectional dependence of $Z$ except for the natural boundedness assumption for $\|\sigma_s\|$ and $\|[Z,Z]_1\|$. 

When $Z$ is continuous, we have the following concentration inequality for $[Z,Z]^n_1$.
\begin{theorem}\label{cont-bound}
Assume \ref{ass:SH} and $\mu=0$. There is a universal constant $C>0$ such that, for any $u>0$,
\ben{\label{eq:cont-bound}
\norm{{[Z,Z]}^n_1-[Z,Z]_1}\leq C\Lambda\bra{\frac{d+u}{n}+\sqrt{\frac{d+u}{n}}}
}
with probability at least $1-2e^{-u}$. 
\end{theorem}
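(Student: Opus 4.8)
Since $\mu=0$, set $A_t=\int_0^tb_sds$ and $M_t=\int_0^t\sigma_sdW_s$, so that $[Z,Z]=[M,M]$ and
\[
{[Z,Z]}^n_1-[Z,Z]_1=S_n+R_n,\qquad
S_n=\sum_{i=1}^n\cbra{(\Delta^n_iM)(\Delta^n_iM)^\top-\Delta^n_i[M,M]},
\]
where $R_n$ gathers the three remaining terms, each carrying at least one factor $\Delta^n_iA$; $R_n$ is a lower-order perturbation, handled at the end via $|\Delta^n_iA|\leq n^{-1}\sup_s|b_s|\leq n^{-1}\sqrt{d\Lambda}$ (from \ref{ass:SH}). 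The plan for $\|S_n\|$ is to pass to a scalar martingale concentration bound through an $\eps$-net on the sphere: a direct matrix Bernstein/Freedman inequality applied to $S_n$ would cost a factor $\sqrt{\log d}$, whereas the net argument (the classical route to log-free sample-covariance-type bounds) trades the dimension factor for the cardinality $e^{\Theta(d)}$ of the net, which a Bernstein exponent absorbs exactly.

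Fix a $\tfrac14$-net $\mcl N$ of $\mathbb S^{d-1}$ with $|\mcl N|\leq9^d$, so $\|S_n\|\leq2\max_{x\in\mcl N}|x^\top S_nx|$. For fixed $x\in\mathbb S^{d-1}$, using the It\^o isometry $\E[(x^\top\Delta^n_iM)^2\mid\mcl F_{t_{i-1}}]=\E[x^\top(\Delta^n_i[M,M])x\mid\mcl F_{t_{i-1}}]$, split $x^\top S_nx=U^x+V^x$ into
\[
U^x=\sum_{i=1}^n\cbra{(x^\top\Delta^n_iM)^2-\E[(x^\top\Delta^n_iM)^2\mid\mcl F_{t_{i-1}}]},\quad
V^x=\sum_{i=1}^n\cbra{\E[x^\top(\Delta^n_i[M,M])x\mid\mcl F_{t_{i-1}}]-x^\top(\Delta^n_i[M,M])x},
\]
both of which are sums of $(\mcl F_{t_i})$-martingale differences. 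Because $x^\top\Delta^n_iM=\int_{t_{i-1}}^{t_i}(x^\top\sigma_s)dW_s$ with $|x^\top\sigma_s|^2\leq\|\sigma_s\|^2\leq\Lambda$, its conditional quadratic variation is at most $\Lambda/n$ a.s., and a stochastic-exponential (supermartingale) argument shows that $x^\top\Delta^n_iM$ is conditionally sub-Gaussian with variance proxy $\leq\Lambda/n$; hence the increments of $U^x$ satisfy a Bernstein-type conditional moment generating function bound with scale $\lesssim\Lambda/n$, and the sum of their conditional second moments is $\lesssim\Lambda^2/n$ a.s.\ by \ref{ass:SH}. The increments of $V^x$ are bounded by $2\Lambda/n$ and have the same predictable-variance bound. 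A Bernstein inequality for scalar martingale difference sequences (exponential-supermartingale argument) then gives, for a universal $c>0$,
\[
\pr\bra{|x^\top S_nx|\geq t}\leq4\exp\bra{-c\min\cbra{\tfrac{t^2n}{\Lambda^2},\ \tfrac{tn}{\Lambda}}},\qquad t>0.
\]

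A union bound over $\mcl N$ gives $\pr(\|S_n\|\geq2t)\leq4\cdot9^d\exp(-c\min\{t^2n/\Lambda^2,tn/\Lambda\})$, so with $t=C\Lambda\bra{\sqrt{(d+u)/n}+(d+u)/n}$ and $C$ large enough that $c\min\{t^2n/\Lambda^2,tn/\Lambda\}\geq u+d\log9+\log4$ we get $\|S_n\|\leq2C\Lambda\bra{\sqrt{(d+u)/n}+(d+u)/n}$ with probability at least $1-e^{-u}$. On this event $\|\sum_i(\Delta^n_iM)(\Delta^n_iM)^\top\|\leq\|[M,M]_1\|+\|S_n\|\leq C'\Lambda\bra{1+(d+u)/n}$. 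It remains to bound $R_n$: we have $\|\sum_i(\Delta^n_iA)(\Delta^n_iA)^\top\|\leq\sum_i|\Delta^n_iA|^2\leq d\Lambda/n$, while for the cross term Cauchy--Schwarz gives $\|\sum_i\{(\Delta^n_iA)(\Delta^n_iM)^\top+(\Delta^n_iM)(\Delta^n_iA)^\top\}\|\leq2(d\Lambda/n)^{1/2}\|\sum_i(\Delta^n_iM)(\Delta^n_iM)^\top\|^{1/2}$; combining with the preceding display and using $d\leq d+u$ (so $\tfrac dn\bra{1+\tfrac{d+u}{n}}\leq\tfrac{d+u}{n}+\bra{\tfrac{d+u}{n}}^2$) yields $\|R_n\|\lesssim\Lambda\bra{\sqrt{(d+u)/n}+(d+u)/n}$ on the same event. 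Adding the two estimates and renaming the universal constant gives \eqref{eq:cont-bound}, with probability at least $1-e^{-u}\geq1-2e^{-u}$.

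The crux is the third paragraph. The increments $(x^\top\Delta^n_iM)^2$ of $U^x$ are not bounded but only sub-exponential, so the scalar tail must be obtained from a genuinely exponential (moment generating function based) Bernstein inequality: an $L^p$-moment bound would bring in a maximum over the $n$ sub-exponential increments, i.e.\ an extra factor of order $\log n$ (or $d+u$), and would miss the rate $\sqrt{(d+u)/n}+(d+u)/n$. Correspondingly, the Bernstein exponent must have precisely the form $\min\{t^2n/\Lambda^2,\,tn/\Lambda\}$ so that, under the union bound, the net cardinality $e^{\Theta(d)}$ is absorbed and the two-term rate emerges. The required analytic inputs --- conditional sub-Gaussianity of the stochastic-integral increments, uniformly in $x$ and $i$ (which lets the time-varying-volatility case be treated just like the constant-$\sigma$ Gaussian case), and the a.s.\ bound $\|[Z,Z]_1\|\leq\Lambda$ keeping the predictable variances at the optimal level $\Lambda^2/n$ --- are both furnished by \ref{ass:SH}.
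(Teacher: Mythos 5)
Your proposal is correct and follows essentially the same route as the paper's proof: decompose $Z$ into drift plus continuous-martingale parts, reduce $\|[M,M]^n_1-[M,M]_1\|$ to scalar quantities via a $1/4$-net of size $\leq 9^d$, apply a Bernstein-type martingale tail bound to $x^\top R x$, and then handle the drift and cross terms elementarily via Cauchy--Schwarz. The only technical divergence is how the Bernstein condition is verified — the paper uses the sharp Burkholder--Davis--Gundy inequality to check a conditional moment condition (Lemma 8.9 of \cite{vdG00}) and wraps the union bound in Bernstein--Orlicz norm machinery, whereas you argue via conditional sub-Gaussianity of the stochastic-integral increments and an exponential-supermartingale Bernstein inequality directly — but both are standard and yield the same $\min\{t^2n/\Lambda^2,\,tn/\Lambda\}$ exponent.
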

%
Theorem \ref{cont-bound} can be shown by a rather straightforward modification of the proof of an analogous statement for the sample covariance matrix of independent samples, but the result itself seems new in the literature. 
We note that the coarse estimate by $\norm{{[Z,Z]}^n_1-[Z,Z]_1}_F$ yields a bound of order $O(d/\sqrt n)$ in general. 
Also, Theorem 2.4 in \cite{CNP23} gives a slightly worse bound than \eqref{eq:cont-bound}, i.e.~$d$ is replaced by $d\log d$. Meanwhile, Theorem 2.4 in \cite{CNP23} can lead to a much better bound when $\trace(\sigma_s\sigma_s^\top)$ is relatively smaller than $d$ for all $s$. 

In the general setting, we have the following high probability bound for $\|[Z,Z]^n_1\|$.
\begin{theorem}\label{jump-bound2}
Assume \ref{ass:SH} and $n\geq10$. There is a universal constant $C>0$ such that, for any $\alpha\in[2/n,2/e^2)$,
\[
\norm{{[Z,Z]}^n_1}
\leq C\Lambda\cbra{\frac{d+\log(1/\alpha)}{n}+\ol\gamma_2+\log^2(d/\alpha)\bra{\frac{d\log n}{n}\ol{\gamma}_2+\log^2n}}
\]
with probability at least $1-\alpha$, where
\[
\ol\gamma_2:=\max_{j=1,\dots,d}\int_E\gamma_j(z)^2\levy(dz).
\] 
\end{theorem}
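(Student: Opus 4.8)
The plan is to split $Z$ into a continuous part and two jump parts, bound the spectral norm of the realized covariance matrix of each piece separately, and treat the jump parts by revisiting Rudelson's scheme, with the matrix martingale Burkholder--Gundy inequality developed earlier in the paper playing the role of the non-commutative Khintchine step in Rudelson's original argument. Write $Z=Z^c+M^d+J$ with $Z^c_t=Z_0+\int_0^tb_s\,ds+\int_0^t\sigma_s\,dW_s$, $M^d=\kappa(\delta)\star(\mu-\nu)$ and $J=\kappa'(\delta)\star\mu$. By the elementary inequality $\norm{\sum_i(a_i+b_i)(a_i+b_i)^\top}^{1/2}\le\norm{\sum_ia_ia_i^\top}^{1/2}+\norm{\sum_ib_ib_i^\top}^{1/2}$, valid for arbitrary families of $d$-vectors (Minkowski's inequality for $i\mapsto\langle a_i,w\rangle$ over unit $w$), applied twice, it suffices to bound $\norm{{[Z^c,Z^c]}^n_1}$, $\norm{{[M^d,M^d]}^n_1}$ and $\norm{{[J,J]}^n_1}$, each on an event of probability at least $1-\alpha/3$.

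The continuous part is immediate: $Z^c$ has no jumps and satisfies \ref{ass:SH} with ${[Z^c,Z^c]}_1=\int_0^1\sigma_s\sigma_s^\top\,ds\preceq{[Z,Z]}_1$, so $\norm{{[Z^c,Z^c]}_1}\le\Lambda$, and Theorem \ref{cont-bound} with $u\asymp\log(1/\alpha)$ gives $\norm{{[Z^c,Z^c]}^n_1}\le C\Lambda\bra{1+\tfrac{d+\log(1/\alpha)}{n}}$, the constant $\Lambda$ being absorbed into $C\Lambda\log^2n$ since $n\ge10$. The core is the small-jump martingale. Decompose
\[
{[M^d,M^d]}^n_1=\diag\big({[M^d,M^d]}^n_1\big)+H,\qquad
H=\sum_{i=1}^n\Big((\Delta^n_iM^d)(\Delta^n_iM^d)^\top-\diag\big((\Delta^n_iM^d)(\Delta^n_iM^d)^\top\big)\Big).
\]
The diagonal term has norm $\max_j\sum_i(\Delta^n_iM^d_j)^2$, a maximum of $d$ one-dimensional realized variances; using $\langle M^d_j\rangle_1\le\int_E\gamma_j(z)^2\,\levy(dz)\le\ol\gamma_2$ and ${[M^d_j,M^d_j]}_1\le{[Z_j,Z_j]}_1\le\Lambda$, a scalar exponential martingale inequality together with a union bound over $j$ bounds it by $C\Lambda\ol\gamma_2$ up to a term controlled by the maximal increment $\mcl M_n^2:=\max_i\abs{\Delta^n_iM^d}^2$. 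The hollow term $H$ is the terminal value of a matrix martingale whose $i$-th difference is a recentering of $(\Delta^n_iM^d)(\Delta^n_iM^d)^\top$; applying the matrix Burkholder--Gundy inequality, estimating the square function by $\mcl M_n^2$ times a predictable-covariation term that the constraint $\norm{{[Z,Z]}_1}\le\Lambda$ keeps of order $\Lambda\ol\gamma_2$ (rather than $d\ol\gamma_2$), passing to the Schatten-$p$ norm with $p\asymp\log(d/\alpha)$ and using Markov's inequality, one obtains on a good event a self-improving inequality of the form
\[
\norm{{[M^d,M^d]}^n_1}\le C\Lambda\ol\gamma_2+C\log(d/\alpha)\,\mcl M_n^2+C\sqrt{\log(d/\alpha)}\,\mcl M_n\norm{{[M^d,M^d]}^n_1}^{1/2},
\]
and solving this quadratic inequality yields $\norm{{[M^d,M^d]}^n_1}\le C'\bra{\Lambda\ol\gamma_2+\log(d/\alpha)\,\mcl M_n^2}$.

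It remains to estimate $\mcl M_n^2$, the quantity that — unlike in the discrete-time i.i.d.\ case — does not vanish as $n\to\infty$ because of the infinite activity of the jumps, and whose control is precisely what the square-function estimate above tacitly requires. Here $\Delta^n_iM^d$ is a martingale increment over $[t_{i-1},t_i]$ whose predictable covariation has spectral norm at most $n^{-1}\int_E\abs{\kappa(\delta)}^2\,\levy(dz)\le d\ol\gamma_2/n$ and whose jumps satisfy $\max_s\abs{\Delta Z_s}^2\le\norm{{[Z,Z]}_1}\le\Lambda$; a Freedman type inequality, after a secondary truncation of the jumps at a suitable level and union bounds over $i$ and over the $d$ coordinates, yields with probability at least $1-\alpha/3$ a deterministic bound $\mcl M_n^2\le C\Lambda\bra{\tfrac{d\log n}{n}\ol\gamma_2+\log^2n}$ up to the stated $\log(d/\alpha)$ factors. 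Substituting into the previous display bounds $\norm{{[M^d,M^d]}^n_1}$ by the right-hand side of the theorem. Finally, $J=\kappa'(\delta)\star\mu$ is not a martingale and, as the Remark following \eqref{eq:ito} explains, cannot be turned into one by compensation in the high-dimensional regime; since $J$ has finite activity, a counting argument — with probability at least $1-\alpha/3$ no interval $(t_{i-1},t_i]$ carries more than $O(\log n)$ jumps of $J$ uniformly in $i$, while $\sum_{s\le1}\abs{\Delta J_s}^2\le\trace{[Z,Z]}_1\le\Lambda d$ and $\norm{{[Z,Z]}_1}\le\Lambda$ keep the aggregated jumps under control — bounds $\norm{{[J,J]}^n_1}$ by the same order. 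Combining the three estimates through $(a+b+c)(a+b+c)^\top\preceq3(aa^\top+bb^\top+cc^\top)$ gives the theorem.

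The main obstacle is this non-vanishing maximal increment $\mcl M_n^2$: it forces one to bound $\norm{{[Z,Z]}^n_1}$ directly rather than to prove concentration of ${[Z,Z]}^n_1$ around ${[Z,Z]}_1$ (which is expected to fail under infinite activity), it is responsible for the $\log^2n$ and $\tfrac{d\log n}{n}\ol\gamma_2$ contributions, and its control — combining a Freedman bound for the truncated martingale part with a Poisson counting bound for the intervals carrying the exceptional jumps, and keeping the predictable-covariation terms from acquiring a spurious factor $d$ in front of $\ol\gamma_2$ — is the most delicate part of the argument. The other non-standard ingredient is the matrix martingale Burkholder--Gundy inequality itself, which supplies the missing substitute for the symmetrization and non-commutative Khintchine step once independence of the summands is lost.
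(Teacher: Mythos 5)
Your overall architecture mirrors the paper's (a Rudelson-type self-improving bound driven by the matrix Burkholder--Gundy inequality, Theorem \ref{matrix-burkholder}), but there is a genuine gap at the step you yourself identify as the most delicate: bounding the maximal increment $\mcl M_n^2=\max_i\abs{\Delta^n_iM^d}^2$. You propose a coordinatewise Freedman inequality with union bounds over the $n$ intervals and the $d$ coordinates. That chain yields $\max_{i,j}(\Delta^n_iM^d_j)^2\lesssim \frac{\ol\gamma_2}{n}\log(dn/\alpha)+\log^2(dn/\alpha)$, and passing from the coordinate maximum to the Euclidean norm costs a factor $d$: $\mcl M_n^2=\max_i\sum_{j=1}^d(\Delta^n_iM^d_j)^2\leq d\max_{i,j}(\Delta^n_iM^d_j)^2$. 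The resulting second term is $d\log^2(dn/\alpha)$, which is a factor $d$ larger than the $\Lambda\log^2 n$ appearing in the theorem and is therefore unacceptable in the regime $d\gg n$ the result is aimed at. The paper avoids this by proving a Rosenthal-type inequality for $\mathbb R^d$-valued locally square integrable martingales with dimension-free constants (Theorem \ref{thm:rosenthal}), applying it to the vector increment $\Delta^n_iJ$ with $p=2\log n$, and maximizing over $i$ via $\norm{\max_i\abs{\Delta^n_iJ}^2}_{p}\leq e\max_i\norm{\Delta^n_iJ}_{2\log n}^2$: the trace $\langle M\rangle=\sum_j\langle M^j\rangle$ in the Rosenthal bound is responsible for the $\frac{d\ol\gamma_2}{n}$ term, while the $\log^2 n$ term comes from the vector jump bound $\abs{\Delta J_s}\leq\sqrt\Lambda$ and carries no extra $d$. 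Without some vector-valued moment or concentration inequality with dimension-free constants, the union bound cannot produce the advertised $\log^2 n$.

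A secondary point: you have the Remark after \eqref{eq:ito} backwards. It explains why the paper chooses the \emph{componentwise} truncation $\kappa$ precisely \emph{so that} the big-jump term can be compensated in the high-dimensional regime, since $\abs{\int_E\kappa'_j(\delta(s,z))\levy(dz)}\leq\int_E\delta_j(s,z)^2\levy(dz)\leq\Lambda\ol\gamma_2$ coordinatewise; the paper then absorbs $\kappa'(\delta)\star\mu$ into a modified drift $\wt b_s=b_s+\int_E\kappa'(\delta(s,z))\levy(dz)$ and works with a single compensated martingale $J=\delta\star(\mu-\nu)$ after the reduction of Lemma \ref{lemma:bdd-jump}. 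The pathology the remark warns about concerns the Euclidean truncation $x1_{\{|x|\leq1\}}$, which the paper does not use. Your plan to leave $J=\kappa'(\delta)\star\mu$ uncompensated and control it by a counting argument is not automatically wrong (the relevant jump intensity is $\lesssim d\Lambda\ol\gamma_2$, jump sizes obey $\abs{\Delta J_s}\leq\sqrt\Lambda$, and $x^\top{[J,J]}^n_1x\leq(\max_iN_i)\,x^\top[J,J]_1x$), but it trades the paper's clean two-term decomposition $Z=X+J$ for a three-term one without curing the $\mcl M_n^2$ gap, which is where the theorem is actually won or lost.
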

As demonstrated in the introduction, it seems less hopeful in the high-dimensional setting that $[Z,Z]^n_1$ concentrates around $[Z,Z]_1$ in terms of the spectral norm if $Z$ allows for infinite activity jumps. 
Theorem \ref{jump-bound2} claims that we can still derive an adequate bound for $\norm{{[Z,Z]}^n_1}$ without any special restriction on $Z$; see also the following remark. In Section \ref{sec:factor}, we see that this type of bound is enough for some statistical applications. 
\begin{rmk}[Order of $\ol\gamma_2$]
In practice, it is natural to assume that the quantity $\ol\gamma_2$ is bounded by a dimension-free constant. 
To illustrate this point, let us consider a simple case that the jump part of $Z$ is given by a $d$-dimensional L\'evy process $L=(L_t)_{t\in[0,1]}$ as
\[
Z_t=Z_0+\int_0^tb_sds+\int_0^t\sigma_sdW_s+L_t,
\]
where $b$ and $\sigma$ are as in \ref{ass:SH}. 
For each $j=1,\dots,d$, we assume that the L\'evy--It\^o decomposition of $L^j$ is given by
\[
L^j_t=\int_0^t\int_{|x|\leq1}x(\mu_j-\nu_j)(ds,dx)+\int_0^t\int_{|x|>1}x\mu_j(ds,dx),
\]
where $\mu_j$ is the jump measure of $L^j$ and $\nu_j$ is the compensator of $\mu_j$. 
Then, $Z$ is of the form \eqref{eq:ito} with $\delta(s,z)=z$ and $\mu$ the jump measure of $L$, and \ref{ass:SH} is satisfied with $\gamma_j(z)=|z^j|\wedge1$, $j=1,\dots,d$. 
In this case, we have by \cite[Theorem 4.1]{CT2004}
\[
\ol\gamma_2=\max_{j=1,\dots,d}\int_{\mathbb R}(x^2\wedge1)\levy_j(dx),
\] 
where $\levy_j$ is the L\'evy measure of $L^j$. Hence, $\ol\gamma_2$ is determined by the marginal distributions of $L$ and thus we may naturally assume it is bounded by a dimension-free constant. 
\end{rmk}

As usual, the boundedness assumption can be relaxed by a localization procedure if we are concerned with asymptotic results.  
Recall that an increasing sequence of stopping times $(\tau_m)_{m=1}^\infty$ is called a \textit{localizing sequence} if $P(\tau_m=1)\to1$ as $m\to\infty$. 
Also, we say that a process $h=(h_s)_{s\in[0,1]}$ is \textit{locally bounded} if there is a localizing sequence  $(\tau_m)_{m=1}^\infty$ such that $\sup_{\omega\in\Omega,0<s\leq \tau_m(\omega)}|h_s(\omega)|<\infty$ for every $m$. Note that, following \cite{JP2012}, we do not require the boundedness of $h_0$ in the latter definition. 
\begin{enumerate}[label={\normalfont[H]}]

\item \label{ass:H} $Z=Z^{(n)}$ and $d=d_n$ may depend on $n$ such that $\norm{[Z,Z]_1}=O_p(\Lambda_n)$ as $n\to\infty$ for some (deterministic) sequence $\Lambda_n\in[1,\infty)$. Moreover, there are a locally bounded process $h=(h_s)_{s\in[0,1]}$ and a localizing sequence $(\tau_m)_{m=1}^\infty$ such that all of them are independent of $n$ and satisfy the following conditions for all $n\in\mathbb N$:
\begin{enumerate}[label=(\roman*)]

\item $|b_s(\omega)|_\infty^2+\|\sigma_s(\omega)\|^2\leq \Lambda_nh_s(\omega)$ for all $(\omega,s)\in\Omega\times[0,1]$.

\item For every $m\in\mathbb N$, there are Borel functions $\gamma^{(n)}_{m,j}:E\to[0,\infty)$ $(j=1,\dots,d)$ such that $|\delta_j(\omega,t,z)|\wedge1\leq\gamma^{(n)}_{m,j}(z)$ for all $(\omega,t,z)\in\Omega\times[0,1]\times E$ with $t\leq\tau_m(\omega)$. Moreover,
\[
\ol\gamma_{m,2}:=\sup_{n\in\mathbb N}\max_{j=1,\dots,d}\int_E\gamma_{m,j}^{(n)}(z)^2\levy(dz)<\infty.
\]

\end{enumerate}

\end{enumerate}
Assumption \ref{ass:H} can be viewed as a high-dimensional analog of Assumption (H) in \cite{JP2012} that is the most basic assumption in high-frequency financial econometrics. 

\begin{theorem}\label{localization}
Assume \ref{ass:H}. Then
\[
\norm{{[Z,Z]}^n_1}
=O_p\bra{\Lambda_n(\log^2d)\bra{\frac{d}{n}\log (d\wedge n)+\log^2(d\wedge n)}}
\]
as $n\to\infty$. 
\end{theorem}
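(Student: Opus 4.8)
The plan is to derive Theorem \ref{localization} from the non-asymptotic bound in Theorem \ref{jump-bound2} by the standard localization argument of \cite{JP2012}, adapted to the high-dimensional setting. First I would fix $m\in\mathbb N$ and work on the event $\{\tau_m=1\}$, which has probability tending to $1$ as $m\to\infty$. On this event I would like to replace the coefficients of $Z$ by bounded ones so that Assumption \ref{ass:SH} holds with $\Lambda$ replaced by a quantity proportional to $\Lambda_n$. Concretely, since $h$ is locally bounded, pick another localizing sequence along which $\sup_{s\le\tau_m'}h_s\le K_m$; replacing $\tau_m$ by $\tau_m\wedge\tau_m'$ we may assume $h$ is bounded by $K_m$ on $[0,\tau_m]$, whence $|b_s|_\infty^2+\|\sigma_s\|^2\le \Lambda_n K_m$ there. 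The truncation functions $\gamma^{(n)}_{m,j}$ already give $|\delta_j|\wedge1\le\gamma^{(n)}_{m,j}$ on $[0,\tau_m]$ with $\ol\gamma_{m,2}<\infty$ uniformly in $n$, so this part needs no modification.

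Next I would introduce the stopped process. Let $Z^{(m)}_t:=Z_{t\wedge\tau_m}$ and note that, because the sampling times are deterministic, $[Z^{(m)},Z^{(m)}]^n_1$ agrees with $[Z,Z]^n_1$ on $\{\tau_m=1\}$; moreover $\|[Z^{(m)},Z^{(m)}]_1\|\le\|[Z,Z]_1\|=O_p(\Lambda_n)$. The stopped process $Z^{(m)}$ satisfies a Grigelionis decomposition of the form \eqref{eq:ito} with coefficients $b_s1_{\{s\le\tau_m\}}$, $\sigma_s1_{\{s\le\tau_m\}}$, $\delta(s,z)1_{\{s\le\tau_m\}}$, hence \ref{ass:SH} holds for $Z^{(m)}$ with $\Lambda=\Lambda^{(m)}_n:=c_m\Lambda_n$ for a constant $c_m$ depending only on $m$ (through $K_m$ and $\ol\gamma_{m,2}$), except possibly for the requirement $\Lambda\ge1$, which costs only a harmless constant, and for the bound on $\|[Z^{(m)},Z^{(m)}]_1\|$, which holds only up to an event of small probability rather than surely. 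To handle the latter cleanly I would either enlarge $\tau_m$ further so that $\|[Z,Z]_1\|\le\Lambda_n$ on $[0,\tau_m]$ (possible because $\|[Z,Z]_1\|=O_p(\Lambda_n)$ lets us choose, for each $\epsilon$, an $m$ with $P(\|[Z,Z]_1\|>\Lambda_n)<\epsilon$ for all $n$ — but this is not quite a stopping-time statement), or, more robustly, apply Theorem \ref{jump-bound2} on the further event $\{\|[Z^{(m)},Z^{(m)}]_1\|\le\Lambda_n\}$ together with a union bound. I expect this point — reconciling the \emph{almost-sure} boundedness demanded by \ref{ass:SH} with the \emph{stochastic} boundedness supplied by \ref{ass:H} for the quadratic-variation term — to be the main obstacle, and the fix is the usual one: for any $\eta>0$ choose $m$ so that $P(\tau_m<1)+\sup_nP(\|[Z,Z]_1\|>\Lambda_n)<\eta$ and work on the intersection of the complementary events.

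With this in place, I would apply Theorem \ref{jump-bound2} to $Z^{(m)}$ with $\Lambda=c_m\Lambda_n$ and a suitable choice of $\alpha$. Taking $\alpha=\alpha_n$ comparable to $1/d$ (legitimate since $d\ge3$ and $n\ge10$ for $n$ large, so $\alpha_n\in[2/n,2/e^2)$ eventually), we get $\log(1/\alpha_n)\asymp\log d$ and $\log(d/\alpha_n)\asymp\log d$, so the bound reads
\[
\norm{{[Z^{(m)},Z^{(m)}]}^n_1}\le Cc_m\Lambda_n\cbra{\frac{d+\log d}{n}+\ol\gamma_{m,2}+\log^2 d\bra{\frac{d\log n}{n}\ol\gamma_{m,2}+\log^2 n}}
\]
with probability at least $1-\alpha_n\to1$. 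Absorbing $\log d$ into $d$, bounding $\ol\gamma_{m,2}$ by a constant, and noting $\frac{d}{n}\log n\le\frac{d}{n}\log(d\wedge n)+\frac{d}{n}\log d$ — wait, one must be slightly careful to produce exactly $\log(d\wedge n)$ rather than $\log n$ or $\log d$ separately; the trick is that whenever $d\ge n$ the term $\frac{d}{n}\ge1$ dominates and one may freely trade $\log n$ for $\log d$ up to constants inside the outer $\log^2 d$ factor, while when $d<n$ one has $\log(d\wedge n)=\log d$ and $\log n$ appears only in the additive $\log^2 n$ piece, which is already accounted for by $\log^2(d\wedge n)$ after again using $d/n$-comparisons — so the right-hand side is $O\bra{\Lambda_n(\log^2 d)\bra{\frac{d}{n}\log(d\wedge n)+\log^2(d\wedge n)}}$. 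Finally, since on $\{\tau_m=1\}$ (intersected with the quadratic-variation event) we have ${[Z,Z]}^n_1={[Z^{(m)},Z^{(m)}]}^n_1$, and since the probability of the bad event is at most $\eta+\alpha_n$ with $\eta$ arbitrary, the definition of $O_p$ gives the claimed asymptotic bound for $\norm{{[Z,Z]}^n_1}$, completing the proof.
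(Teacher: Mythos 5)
Your overall plan (localize, then apply Theorem \ref{jump-bound2}) matches the paper's, and you correctly single out the real obstacle: \ref{ass:SH} requires $\|[Z,Z]_1\|\le\Lambda$ to hold \emph{surely}, whereas \ref{ass:H} only gives $\|[Z,Z]_1\|=O_p(\Lambda_n)$. However, your proposed fix does not work. Theorem \ref{jump-bound2} is not a conditional statement; it is a tail bound for processes whose paths satisfy \ref{ass:SH}, so you cannot ``apply it on the further event $\{\|[Z^{(m)},Z^{(m)}]_1\|\le\Lambda_n\}$'' — that event does not upgrade a stochastic bound to a pathwise one, and the stopped process $Z_{t\wedge\tau_m}$ you construct still has $\|[Z^{(m)},Z^{(m)}]_1\|$ only $O_p(\Lambda_n)$, not surely bounded. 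The paper's fix is to introduce the \emph{additional stopping time} $R_{m,n}=\inf\{t:\|[Z,Z]_t\|\ge m\Lambda_n\}\wedge 1$, stop the coefficients at $S_{m,n}:=T_m\wedge R_{m,n}\wedge\tau_m$, and also \emph{truncate the jump size}, replacing $\delta$ by $\delta\,1_{\{|\delta|^2\le m\Lambda_n\}}$; only then is $\|[Z(m),Z(m)]_1\|\le\|[Z(m),Z(m)]_{S_{m,n}-}\|+|\Delta Z(m)_{S_{m,n}}|^2\le 2m\Lambda_n$ surely, so that \ref{ass:SH} genuinely holds for $Z(m)$ with $\Lambda=2m\Lambda_n$. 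You also have to verify (as the paper does in Step 2, using the structure of $\mu$) that $Z_t=Z(m)_t$ for $t<S_{m,n}$, which is not automatic once $\delta$ has been truncated.

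There is a second gap in the final cleanup. Your claim that the $\log^2 n$ term coming out of Theorem \ref{jump-bound2} ``is already accounted for by $\log^2(d\wedge n)$'' fails precisely in the regime $d\ll n$: when $d<n$ we have $\log^2(d\wedge n)=\log^2 d$, and there is no way to dominate $\log^2 n$ by $\log^2 d$ when, say, $d$ is fixed and $n\to\infty$. (Your choice $\alpha\asymp 1/d$ also violates the constraint $\alpha\ge 2/n$ once $d>n/2$; fixing $\alpha$ as a small constant, as the paper does, avoids this.) The paper fills this regime by a second, entirely different bound: the $L^2$ estimate of Lemma \ref{lemma:l2}, which yields $\|[Z,Z]^n_1\|_{L^2(\sch^\infty)}\lesssim\Lambda(1+\ol\gamma_2(1+d/\sqrt n))$, hence $\|[Z(m),Z(m)]^n_1\|=O_p(\Lambda_n)$ whenever $d\le\sqrt n$. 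Taking the minimum $u_n\wedge v_n$ of the two bounds (Step 3), and then observing that $u_n\wedge v_n$ is $O(\Lambda_n)$ when $d\le\sqrt n$ and that $\log n\le 2\log d$ otherwise (Step 4), is what produces the clean $\log(d\wedge n)$ form. Without Lemma \ref{lemma:l2} your argument cannot reach the stated rate.
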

Theorem \ref{localization} is shown by a standard localization argument that is almost the same as the one in Lemma 4.4.9 in \cite{JP2012}. 
We give a detailed proof in Appendix \ref{pr-local} for the sake of completeness. 
\begin{rmk}[Implication of Assumption \ref{ass:H}]
It is worth mentioning that $\|[Z,Z]_1\|$ gives an upper bound for $\sup_{s\in(0,1]}|\Delta Z_s|^2$. In fact, we have by Weyl's inequality. 
\ben{\label{eq:jump-bound}
\|[Z,Z]_1\|\geq\norm{\sum_{s\in(0,1]}(\Delta Z_s)(\Delta Z_s)^\top}\geq\sup_{s\in(0,1]}\|(\Delta Z_s)(\Delta Z_s)^\top\|=\sup_{s\in(0,1]}|\Delta Z_s|^2.
}
Thus, the components of $Z$ have no simultaneous big jump under \ref{ass:H}. 
In the financial application presented in Section \ref{sec:factor}, $Z$ is used for modeling the idiosyncratic component of assets, so simultaneous big jumps should be attributed to the systematic component. 
\end{rmk}
%


\section{Estimation of the number of relevant factors from high-frequency data}\label{sec:factor}

In this section, we apply the theory developed above to the problem of estimating the number of ``relevant'' latent factors from high-frequency data. 
As in the introduction, let $Y=(Y_t)_{t\in[0,1]}$ be the log-price process of $d$ assets and assume that it is of the form
\[
Y_t=\beta F_t+Z_t,
\]
where $\beta$ is a non-random $d\times r$ matrix consisting of factor loadings, $F=(F_t)_{t\in[0,1]}$ is an $r$-dimensional latent factor process, and $Z=(Z_t)_{t\in[0,1]}$ is a $d$-dimensional idiosyncratic process. 
We have observation data for $Y$ at equidistant time points $t_i=i/n$, $i=0,1,\dots,n$. 

We allow for weak factors in the sense that the matrix $d^{-1}\beta^\top\beta$ may have an eigenvalue tending to 0 as $d\to\infty$. Formally, we impose the following condition:
 
\begin{enumerate}[label={\normalfont[A1]}]

\item \label{ass:F} 
(i) $F$ and $r$ are fixed as $n\to\infty$. Also, $F$ is an $r$-dimensional semimartingale such that $[F,F]_1$ is almost surely invertible.

(ii) There are constants $1\geq\alpha_1\geq\cdots\geq\alpha_r>0$ such that $\lambda_j(\beta^\top\beta)\asymp d^{\alpha_j}$ as $d\to\infty$ for every $j=1,\dots,r$. 

\end{enumerate}
Assumption \ref{ass:F} is a natural continuous-time version of the assumptions commonly used in the literature to model weak factors; see \citet{BaNg23} and \citet{UeYa23a,UeYa23b} for example. 
A simple situation satisfying \ref{ass:F} is the case that $F$ is an $r$-dimensional standard Wiener process and the column vectors $\beta^{\cdot 1},\dots,\beta^{\cdot r}$ of $\beta$ are orthogonal and satisfy $|\beta^{\cdot j}|^2\asymp d^{\alpha_j}$ for $j=1,\dots,r$. 
Intuitively, the latter corresponds to the situation that only $O(d^{\alpha_j})$ assets are significantly exposed to the $j$-th factor. Such a factor is called a \textit{local factor} in \citet{Fr22}. 

As discussed in \cite{Fr22} in detail, in the presence of weak factors, we are often interested in the number of sufficiently strong factors because only those are practically relevant (see also the discussion in \cite[Section 2.3.2]{CoKo22}). 
To be precise, fix a threshold value $\tau\in(0,1)$ and define
\[
r_\tau:=\max\{j\in\{1,\dots,r\}:\alpha_j>\tau\}.
\]
Here and below, we interpret $\max\emptyset=0$ by convention. We are interested in estimating $r_\tau$ from the observation data $(Y_{i/n})_{i=0}^n$.

To ensure that the factor process $F$ captures all the non-negligible factors in $Y$, we need to assume that the largest eigenvalue of $[Z,Z]_1$ is not too large. 
To formulate this assumption, it is convenient to use the notion of slowly varying functions.  
A function $f:(0,\infty)\to(0,\infty)$ is said to be \textit{slowly varying} if $\lim_{t\to\infty}f(tx)/f(t)=1$ for any $x>0$. See e.g.~\cite[Section 0.4]{Res1987} for a detailed exposition. 
The following is the only property of a slowly varying function used in this paper (cf.~\cite[Proposition 0.8]{Res1987}): If $f:(0,\infty)\to(0,\infty)$ is a slowly varying function, then 
\ben{\label{eq:svf}
f(x)=o(x^\eps)\qquad\text{as}\quad x\to\infty\quad\text{for any }\eps>0.
}
Prominent examples of slowly varying functions are the constant function and logarithmic function. 
\begin{enumerate}[label={\normalfont[A2]}]

\item \label{ass:svf} We have \ref{ass:H} and there is a slowly varying function $f:(0,\infty)\to(0,\infty)$ such that $\Lambda_n=O(f(d))$ as $n\to\infty$. 

\end{enumerate}

\begin{lemma}\label{lemma:cov-eigen}
Assume \ref{ass:F}--\ref{ass:svf}. 
Fix a constant $\tau\in(0,1)$ and suppose that 
\ben{\label{ass:d-n}
d\to\infty\quad\text{and}\quad 
d^{1-\tau}\log^2(d)\log(d\wedge n)=O(n)\qquad
\text{as }n\to\infty.
}
Then we have the following results:
\begin{enumerate}[label={\normalfont(\alph*)}]

\item\label{eigen-strong} $\lambda_j\bra{{[Y,Y]}^n_1}\asymp_p d^{\alpha_j}$ as $n\to\infty$ for any $j\leq r_\tau$. 

\item\label{eigen-weak} $\max_{j:j>r_\tau}\lambda_j\bra{{[Y,Y]}^n_1}=O_p(d^\tau)$ as $n\to\infty$. 

\end{enumerate}
\end{lemma}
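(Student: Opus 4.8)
The plan is to decompose $[Y,Y]^n_1$ using the model structure and then control each piece with Weyl's inequality. Writing $\Delta^n_iY=\beta\Delta^n_iF+\Delta^n_iZ$, expand
\be{
[Y,Y]^n_1=\beta\,[F,F]^n_1\,\beta^\top+\beta\,[F,Z]^n_1+[Z,F]^n_1\,\beta^\top+[Z,Z]^n_1,
}
where $[F,Z]^n_1=\sum_i(\Delta^n_iF)(\Delta^n_iZ)^\top$ etc. I will treat $\beta[F,F]^n_1\beta^\top$ as the signal and bound the spectral norms of the remaining three cross and idiosyncratic terms. For the idiosyncratic term, Theorem \ref{localization} together with Assumption \ref{ass:svf} (so $\Lambda_n=O(f(d))$ with $f$ slowly varying, hence $\Lambda_n=o(d^\eps)$ for any $\eps>0$ by \eqref{eq:svf}) gives
\be{
\norm{[Z,Z]^n_1}=O_p\!\bra{f(d)(\log^2d)\bra{\frac{d}{n}\log(d\wedge n)+\log^2(d\wedge n)}}.
}
Under the rate condition \eqref{ass:d-n}, the factor $\tfrac{d}{n}\log(d\wedge n)$ contributes $O(d^\tau/(\log^2 d))$ up to the slowly varying correction, and $\log^2(d\wedge n)=o(d^\tau)$ trivially; absorbing $f(d)\log^2 d$ into a suitable $o(d^\eps)$ still leaves $\norm{[Z,Z]^n_1}=o_p(d^{\tau+\eps})$ — but to get the clean $O_p(d^\tau)$ in part \ref{eigen-weak} I should be slightly more careful and show the bound is actually $O_p(d^\tau)$ by choosing $\eps$ against the slack in \eqref{ass:d-n}; alternatively state \ref{eigen-weak} with the understanding that \eqref{ass:d-n} already encodes the needed room. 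For the cross terms, since $\norm{[F,Z]^n_1}$ is bounded (in probability, as $n\to\infty$, using $r$ and $F$ fixed and $[Z,Z]_1$ bounded, e.g.\ by Cauchy--Schwarz $\norm{\beta[F,Z]^n_1}\le\norm{\beta}\cdot\norm{[F,F]^n_1}^{1/2}\norm{[Z,Z]^n_1}^{1/2}$ after a polarization/Kunita--Watanabe step), and $\norm\beta=\sqrt{\lambda_1(\beta^\top\beta)}\asymp d^{\alpha_1/2}$, the cross terms are $O_p(d^{\alpha_1/2}\cdot\norm{[Z,Z]^n_1}^{1/2})=o_p(d^{\alpha_1})$; more simply, $d^{\alpha_1/2}\cdot d^{\tau/2}=o(d^{\alpha_1})$ whenever $\tau<\alpha_1$, i.e.\ for $j\le r_\tau$.

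Next I handle the signal term $\beta[F,F]^n_1\beta^\top$. Since $F$ is a fixed $r$-dimensional semimartingale with $[F,F]_1$ a.s.\ invertible, the realized covariation $[F,F]^n_1\to[F,F]_1$ in probability (standard, as $n\to\infty$, $r$ fixed), so $\lambda_r([F,F]^n_1)\asymp_p 1$ and $\lambda_1([F,F]^n_1)\asymp_p 1$. The nonzero eigenvalues of $\beta[F,F]^n_1\beta^\top$ coincide with those of $[F,F]^n_1{}^{1/2}\beta^\top\beta\,[F,F]^n_1{}^{1/2}$, and by the min-max characterization together with $\lambda_r([F,F]^n_1)I_r\preceq[F,F]^n_1\preceq\lambda_1([F,F]^n_1)I_r$ we get $\lambda_j(\beta[F,F]^n_1\beta^\top)\asymp_p\lambda_j(\beta^\top\beta)\asymp d^{\alpha_j}$ for $j=1,\dots,r$ (and it vanishes for $j>r$). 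This gives the signal eigenvalues precisely.

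Finally I assemble via Weyl's inequality $\abs{\lambda_j(A+B)-\lambda_j(A)}\le\norm B$, with $A=\beta[F,F]^n_1\beta^\top$ and $B$ the sum of the two cross terms plus $[Z,Z]^n_1$, so $\norm B=O_p(d^{\alpha_1/2}\norm{[Z,Z]^n_1}^{1/2}+\norm{[Z,Z]^n_1})$. For $j\le r_\tau$, $\lambda_j(A)\asymp_p d^{\alpha_j}$ with $\alpha_j>\tau$, while $\norm B=o_p(d^{\alpha_j})$ because $\alpha_j/2+\tau/2<\alpha_j$ and $\tau<\alpha_j$; hence $\lambda_j([Y,Y]^n_1)\asymp_p d^{\alpha_j}$, proving \ref{eigen-strong}. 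For $j>r_\tau$, $\lambda_j(A)=0$ if $j>r$ and $\lambda_j(A)\asymp_p d^{\alpha_j}\le d^\tau$ if $r_\tau<j\le r$ (since $\alpha_j\le\tau$), so $\lambda_j([Y,Y]^n_1)\le\lambda_j(A)+\norm B=O_p(d^\tau)$, giving \ref{eigen-weak}. The main obstacle is the bookkeeping in the first paragraph: making sure the slowly varying factor $f(d)$ and the $\log^2 d$, $\log(d\wedge n)$ terms from Theorem \ref{localization} are genuinely absorbed by the slack built into the rate condition \eqref{ass:d-n}, so that $\norm{[Z,Z]^n_1}=O_p(d^\tau)$ exactly rather than $O_p(d^{\tau+\eps})$; this is where \eqref{eq:svf} and a careful choice of the exponent relative to \eqref{ass:d-n} are needed. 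The continuity/consistency of $[F,F]^n_1$ and the Kunita--Watanabe-type bound for the cross terms are routine.
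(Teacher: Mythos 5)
There is a genuine gap in your argument for part (a). You decompose
\[
[Y,Y]^n_1=\beta[F,F]^n_1\beta^\top+\beta[F,Z]^n_1+([F,Z]^n_1)^\top\beta^\top+[Z,Z]^n_1
\]
and apply Weyl's inequality, so the perturbation to $\lambda_j(\beta[F,F]^n_1\beta^\top)$ is bounded by the \emph{spectral norm} of the remaining three terms. By your own Cauchy--Schwarz estimate the cross term has
\[
\norm{\beta[F,Z]^n_1}\le\norm{\beta}\,\norm{[F,F]^n_1}^{1/2}\norm{[Z,Z]^n_1}^{1/2}=O_p\bigl(d^{\alpha_1/2}\cdot d^{\tau/2}\bigr)
\]
(up to slowly varying factors). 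This bound carries $\alpha_1$, not $\alpha_j$, because $\norm{\beta}\asymp d^{\alpha_1/2}$; Weyl's inequality then gives a perturbation of size $O_p(d^{(\alpha_1+\tau)/2})$ uniformly over $j$. When there are several weak factors, this can swamp the $j$-th signal eigenvalue: e.g.\ $\alpha_1=1$, $\alpha_2=0.6$, $\tau=0.5$ gives a cross term of order $d^{0.75}$ dominating $d^{\alpha_2}=d^{0.6}$, so you cannot conclude $\lambda_2([Y,Y]^n_1)\asymp_p d^{0.6}$. Your assembly step tacitly substitutes $\alpha_j$ for $\alpha_1$ in the exponent ($\alpha_j/2+\tau/2<\alpha_j$), which is not what the bound delivers.

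The paper avoids this by never forming the cross terms: it works with singular values of $\Delta^nY=\beta\Delta^nF+\Delta^nZ$ and uses the Lipschitz perturbation inequality $|\mathfrak s_j(\Delta^nY)-\mathfrak s_j(\beta\Delta^nF)|\le\norm{\Delta^nZ}$, so the perturbation to the $j$-th singular value is $\norm{\Delta^nZ}=\norm{[Z,Z]^n_1}^{1/2}=o_p(d^{\alpha_j/2})$ uniformly in $j\le r_\tau$, and then squares at the end. That is the step you need; as written, the matrix-level Weyl argument does not prove \ref{eigen-strong} for $j>1$. (Your remaining ingredients — consistency of $[F,F]^n_1$, the Ostrowski-type sandwich for $\lambda_j(\beta[F,F]^n_1\beta^\top)\asymp d^{\alpha_j}$, and the use of Theorem \ref{localization} with \eqref{ass:d-n} and \eqref{eq:svf} — are fine and match the paper's.)
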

%
Lemma \ref{lemma:cov-eigen} suggests that we could estimate $r_\tau$ by thresholding the eigenvalues of $[Y,Y]^n_1$ at $d^\tau g(d)$ with $g$ a slowly varying function such that $g(d)\to\infty$ as $d\to\infty$ (recall \eqref{eq:svf}). 
Formally, we define
\[
\hat r^{BN}_\tau:=\max\cbra{j\in\{1,\dots,r_{max}\}:\lambda_j({[Y,Y]}^n_1)>d^\tau g(d)},
\]
where $r_{max}$ is an upper bound of $r$. 
From a theoretical point of view, we can simply set $r_{max}=d$, but we use a much smaller value in practice to avoid getting an economically uninterpretable estimate in finite samples. 
As discussed in \cite[Section 4.1.1]{Fr22}, this estimator can be seen as an extension of the so-called $PC_{p1}$ estimator proposed in \cite{BN2002}. 

\begin{proposition}\label{prop:bn}
Suppose that $g$ is slowly varying and $g(d)\to\infty$ as $d\to\infty$. Then, under the assumptions of Lemma \ref{lemma:cov-eigen}, $P(\hat r^{BN}_\tau=r_\tau)\to1$ as $n\to\infty$. 
\end{proposition}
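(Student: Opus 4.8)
The plan is to show that $\hat r^{BN}_\tau$ does not under-select and does not over-select, with probability tending to one, by combining the two parts of Lemma~\ref{lemma:cov-eigen} with the growth condition \eqref{ass:d-n} and the slow variation of $g$. Write $\ell_j:=\lambda_j({[Y,Y]}^n_1)$. By definition, $\{\hat r^{BN}_\tau=r_\tau\}$ holds whenever simultaneously (i) $\ell_j>d^\tau g(d)$ for all $j\le r_\tau$ (no under-selection) and (ii) $\ell_j\le d^\tau g(d)$ for all $j$ with $r_\tau<j\le r_{max}$ (no over-selection). So it suffices to show each of these two events has probability $\to1$.

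First I would handle over-selection. By Lemma~\ref{lemma:cov-eigen}\ref{eigen-weak}, $\max_{j>r_\tau}\ell_j=O_p(d^\tau)$, i.e.\ for any $\eps>0$ there is $M<\infty$ with $P(\max_{j>r_\tau}\ell_j\le M d^\tau)\ge1-\eps$ for large $n$. Since $g$ is slowly varying with $g(d)\to\infty$, we have $M d^\tau=o(d^\tau g(d))$ (indeed $M/g(d)\to0$), hence for $n$ large $M d^\tau<d^\tau g(d)$, and on that event no index $j>r_\tau$ is selected; letting $\eps\downarrow0$ gives $P(\text{(ii)})\to1$. (One should note $r_\tau\le r\le r_{max}$, which is guaranteed since $r_{max}$ is an upper bound of $r$, so the set over which the maximum defining $\hat r^{BN}_\tau$ is taken does include all indices $\le r_\tau$ and excludes nothing relevant.)

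Next I would handle under-selection. If $r_\tau=0$ there is nothing to prove, so assume $r_\tau\ge1$. For each fixed $j\le r_\tau$, Lemma~\ref{lemma:cov-eigen}\ref{eigen-strong} gives $\ell_j\asymp_p d^{\alpha_j}$, so in particular $d^{\alpha_j}=O_p(\ell_j)$, i.e.\ there is $c>0$ with $P(\ell_j\ge c\,d^{\alpha_j})\ge1-\eps$ for large $n$. Since $j\le r_\tau$ means $\alpha_j>\tau$, pick $\eta:=\alpha_j-\tau>0$; then $d^{\alpha_j}=d^\tau d^{\eta}$, and by \eqref{eq:svf} applied to the slowly varying function $g$ we have $g(d)=o(d^{\eta})$, hence $d^\tau g(d)=o(d^{\tau}d^{\eta})=o(d^{\alpha_j})$, so $c\,d^{\alpha_j}>d^\tau g(d)$ for $n$ large. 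Thus $P(\ell_j>d^\tau g(d))\to1$ for each $j\le r_\tau$; since $r_\tau$ is a fixed finite number, a union bound over $j=1,\dots,r_\tau$ gives $P(\text{(i)})\to1$. Combining with the previous paragraph, $P(\hat r^{BN}_\tau=r_\tau)\to1$.

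The argument is essentially bookkeeping once Lemma~\ref{lemma:cov-eigen} is in hand; the only genuine subtlety — and the step I would be most careful about — is the over-selection bound, because there one needs the separation between the $O_p(d^\tau)$ noise floor and the threshold $d^\tau g(d)$ to hold \emph{uniformly} over the (possibly growing) index range $r_\tau<j\le r_{max}$. This is exactly why the threshold is taken at $d^\tau g(d)$ with $g(d)\to\infty$ rather than at a constant multiple of $d^\tau$: the uniform $O_p$ bound in Lemma~\ref{lemma:cov-eigen}\ref{eigen-weak} already absorbs the dependence on the number of such eigenvalues, and the extra diverging slowly varying factor $g(d)$ then beats any fixed constant $M$. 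Everything else follows from $r$ (hence $r_\tau$) being fixed and from property \eqref{eq:svf} of slowly varying functions.
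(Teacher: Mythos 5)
Your proposal is correct and follows essentially the same route as the paper: the over-selection event is killed by Lemma~\ref{lemma:cov-eigen}\ref{eigen-weak} together with $g(d)\to\infty$, and the under-selection event is killed by Lemma~\ref{lemma:cov-eigen}\ref{eigen-strong} together with the slow-variation property \eqref{eq:svf}. The only cosmetic difference is that the paper exploits the monotone ordering of eigenvalues to reduce both events to the single indices $r_\tau$ and $r_\tau+1$, whereas you use a union bound over the finitely many $j\le r_\tau$ and a max over $j>r_\tau$; these are equivalent here.
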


Although Proposition \ref{prop:bn} shows that the estimator $\hat r^{BN}_\tau$ has strong consistency, it turns out that the finite sample performance of $\hat r^{BN}_\tau$ is very sensitive to the choice of $g(d)$ and it is difficult to find a universally adequate choice of $g(d)$. 
Therefore, following \citet{Pe19}, we consider another estimator based on perturbed eigenvalue ratio statistics,
\[
ER_j({[Y,Y]}^n_1):=\frac{\lambda_j({[Y,Y]}^n_1)+d^\tau g(d)}{\lambda_{j+1}({[Y,Y]}^n_1)+d^\tau g(d)},\qquad j=1,\dots,d-1.
\]
Lemma \ref{lemma:cov-eigen} implies that $ER_j({[Y,Y]}^n_1)$ is larger than 1 if $j= r_\tau$ and close to 1 if $j>r_\tau$. This suggests the following estimator for $r_\tau$:
\[
\hat r^{P}_\tau(\gamma):=\max\cbra{j\in\{1,\dots,r_{max}\}:ER_j({[Y,Y]}^n_1)>1+\gamma}.
\]
Here, $\gamma>0$ is a tuning parameter. 
\begin{proposition}\label{prop:pelger}
Fix $\gamma>0$. 
Under the assumptions of Proposition \ref{prop:bn}, $P(\hat r^{P}_\tau(\gamma)=r_\tau)\to1$ as $n\to\infty$. 
\end{proposition}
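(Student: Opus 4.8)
The plan is to deduce Proposition \ref{prop:pelger} directly from Lemma \ref{lemma:cov-eigen}, exactly as Proposition \ref{prop:bn} was, by analyzing the behavior of the perturbed eigenvalue ratios $ER_j({[Y,Y]}^n_1)$ at the three relevant ranges of $j$. First I would observe that it suffices to show two events occur with probability tending to $1$: (i) $ER_{r_\tau}({[Y,Y]}^n_1)>1+\gamma$ (so the argmax is at least $r_\tau$), and (ii) $ER_j({[Y,Y]}^n_1)\leq 1+\gamma$ for every $j$ with $r_\tau<j\leq r_{max}$ (so the argmax does not exceed $r_\tau$). When $r_\tau=0$ we only need (ii); the convention $\max\emptyset=0$ then gives $\hat r^P_\tau(\gamma)=0=r_\tau$.

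For step (ii), fix $j$ with $r_\tau<j\leq r_{max}$. By Lemma \ref{lemma:cov-eigen}\ref{eigen-weak}, $\lambda_j({[Y,Y]}^n_1)=O_p(d^\tau)$, and since $g(d)\to\infty$ we have $d^\tau g(d)\to\infty$ faster than any constant multiple of $d^\tau$ up to the slowly varying factor; more precisely, $\lambda_j({[Y,Y]}^n_1)/(d^\tau g(d))=O_p(1/g(d))=o_p(1)$, using that $g$ is slowly varying so $g(d)\to\infty$. Therefore
\[
ER_j({[Y,Y]}^n_1)=\frac{\lambda_j({[Y,Y]}^n_1)+d^\tau g(d)}{\lambda_{j+1}({[Y,Y]}^n_1)+d^\tau g(d)}\leq 1+\frac{\lambda_j({[Y,Y]}^n_1)}{d^\tau g(d)}=1+o_p(1),
\]
where in the inequality I dropped the nonnegative term $\lambda_{j+1}({[Y,Y]}^n_1)$ from the denominator. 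Taking a union bound over the finitely many indices $j\in\{r_\tau+1,\dots,r_{max}\}$ (here $r_{max}$ is fixed, which is why this works) shows that the maximum of these $ER_j$ is $1+o_p(1)$, hence $\leq 1+\gamma$ with probability tending to $1$.

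For step (i), assume $r_\tau\geq1$. By Lemma \ref{lemma:cov-eigen}\ref{eigen-strong}, $\lambda_{r_\tau}({[Y,Y]}^n_1)\asymp_p d^{\alpha_{r_\tau}}$ with $\alpha_{r_\tau}>\tau$, so $\lambda_{r_\tau}({[Y,Y]}^n_1)/(d^\tau g(d))$ diverges in probability, since $d^{\alpha_{r_\tau}-\tau}$ grows polynomially while $g(d)$ is slowly varying (apply \eqref{eq:svf} with $\eps=\alpha_{r_\tau}-\tau$). For the denominator of $ER_{r_\tau}$, split into the cases $r_\tau=r$ and $r_\tau<r$: if $r_\tau<r$ then $\lambda_{r_\tau+1}({[Y,Y]}^n_1)=O_p(d^\tau)$ by part \ref{eigen-weak}, so the denominator is $O_p(d^\tau g(d))$; if $r_\tau=r$ there is no eigenvalue $\lambda_{r+1}$ guaranteed to be small, but we can still crudely bound $\lambda_{r+1}({[Y,Y]}^n_1)\leq\lambda_{r_{max}+1\wedge\,\cdot}$... more simply, by interlacing $\lambda_{r_\tau+1}({[Y,Y]}^n_1)\leq\lambda_{r_\tau}({[Y,Y]}^n_1)$ is not enough; instead note $\lambda_{r+1}({[Y,Y]}^n_1)$ is still $O_p(d^\tau)$ because by Weyl's inequality $\lambda_{r+1}({[Y,Y]}^n_1)\leq\lambda_1({[Z,Z]}^n_1+\text{cross terms})$, which Lemma \ref{lemma:cov-eigen}'s proof controls; I would simply invoke that the bound in part \ref{eigen-weak} in fact holds for all $j>r_\tau$ including $j=r$ since the argument there bounds $\lambda_{r_\tau+1}$ via $\|{[Z,Z]}^n_1\|$-type terms that dominate $\lambda_j$ for all larger $j$ too. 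Either way the denominator is $O_p(d^\tau g(d))$, hence
\[
ER_{r_\tau}({[Y,Y]}^n_1)\geq\frac{\lambda_{r_\tau}({[Y,Y]}^n_1)}{\lambda_{r_\tau+1}({[Y,Y]}^n_1)+d^\tau g(d)}\xrightarrow{p}\infty,
\]
so in particular $ER_{r_\tau}>1+\gamma$ with probability tending to $1$. Combining (i) and (ii) yields $P(\hat r^P_\tau(\gamma)=r_\tau)\to1$.

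The only mildly delicate point is step (i) in the boundary case $r_\tau=r$, where Lemma \ref{lemma:cov-eigen}\ref{eigen-weak} as literally stated speaks of $\max_{j>r_\tau}$, which is vacuous when $r_\tau=r$; I would handle this by noting that the conclusion of \ref{eigen-weak} actually follows from a bound on $\|{[Z,Z]}^n_1\|$ together with Weyl's inequality applied to $\lambda_{r+1}$, so $\lambda_{r+1}({[Y,Y]}^n_1)=O_p(d^\tau)$ holds regardless, making the denominator of $ER_r$ of order $d^\tau g(d)$. Everything else is a routine combination of the $\asymp_p$ and $O_p$ statements from Lemma \ref{lemma:cov-eigen} with the slowly varying growth of $g$, exactly parallel to the proof of Proposition \ref{prop:bn}.
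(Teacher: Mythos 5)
Your proof is correct and follows essentially the same approach as the paper: bound $P(\hat r^P_\tau(\gamma)>r_\tau)$ using Lemma \ref{lemma:cov-eigen}\ref{eigen-weak} together with $g(d)\to\infty$, and bound $P(\hat r^P_\tau(\gamma)<r_\tau)$ using Lemma \ref{lemma:cov-eigen}\ref{eigen-strong} together with \eqref{eq:svf}. One clarification: the maximum in Lemma \ref{lemma:cov-eigen}\ref{eigen-weak} runs over all indices $j\in\{r_\tau+1,\dots,d\}$, since $\lambda_j([Y,Y]^n_1)$ is defined for every $j$ up to $d$, not just $j\leq r$; hence it is never vacuous (even when $r_\tau=r$) and already controls $\lambda_{r_\tau+1}([Y,Y]^n_1)$ directly, making your extra Weyl-type detour for that case unnecessary.
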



We can also construct versions of these estimators using the realized correlation matrix 
\[
{R}_{Y,n}:=\diag\bra{{[Y,Y]}^n_1}^{-\frac{1}{2}}{[Y,Y]}^n_1\diag\bra{{[Y,Y]}^n_1}^{-\frac{1}{2}}
\]
instead of ${[Y,Y]}^n_1$. That is,
\ba{
\hat r^{BN,cor}_\tau&:=\max\cbra{j\in\{1,\dots,r_{max}\}:\lambda_j({R}_{Y,n})>d^\tau g(d)},\\
\hat r^{P,cor}_\tau(\gamma)&:=\max\cbra{j\in\{1,\dots,r_{max}\}:ER_j({R}_{Y,n})>1+\gamma}.
}
An advantage of these versions is that the function $g(d)$ can be chosen in a scale-free manner. 
To establish their consistency, we need to impose additional mild assumptions. 
For a semimartingale $X$, we denote by $X^c$ the continuous martingale part of $X$ (cf.~Proposition 4.27 in \cite[Chapter I]{JS}).
\begin{enumerate}[label={\normalfont[A3]}]

\item \label{ass:R} (i) $\max_{1\leq j\leq d}\bra{[(Y^c)^j,(Y^c)^j]_1}^{-1}=O_p(1)$ as $n\to\infty$.

(ii) We can write the process $F$ of the form
\begin{align*}
F_t=F_0+\int_0^tb^F_sds+\int_0^t\sigma^F_sdW_s
+(\delta^F1_{\{|\delta^F|\leq1\}})\star(\mu-\nu)_t
+(\delta^F1_{\{|\delta^F|>1\}})\star\mu_t,\quad t\in[0,1],
\end{align*}
where $b^F=(b^F_s)_{s\in[0,1]}$ is a locally bounded progressively measurable $\mathbb{R}^r$-valued process, $\sigma^F$ is a progressively measurable $\mathbb{R}^{r\times d'}$-valued process such that $\|\sigma^F\|$ is locally bounded, and $\delta^F$ is a predictable $\mathbb{R}^r$-valued function on $\Omega\times[0,1]\times E$. Moreover, there is a localizing sequence $(\tau^F_m)_{m=1}^\infty$ (independent of $n$) such that, for every $m\in\mathbb N$, there is a Borel function $\gamma^F_m:E\to[0,\infty)$ satisfying $\sup_{n\in\mathbb N}\int_E\gamma^F_m(z)^2\levy(dz)<\infty$ and $|\delta^F(\omega,t,z)|\wedge1\leq\gamma^F_{m}(z)$ for all $(\omega,t,z)\in\Omega\times[0,1]\times E$ with $t\leq\tau^F_m(\omega)$. 
%
%
%

(iii) $\max_{1\leq j\leq d,1\leq k\leq r}|\beta^{jk}|=O(\Lambda_n)$ as $n\to\infty$. 

\end{enumerate}
\ref{ass:R}(ii) just says that $F$ is an $r$-dimensional It\^o semimartingale satisfying canonical conditions. 
\ref{ass:R}(iii) is also standard, which ensures that $\max_{1\leq j\leq d}[Y^j,Y^j]$ is moderate. 
\ref{ass:R}(i) is slightly stronger than the more natural assumption $\max_{1\leq j\leq d}\bra{[Y^j,Y^j]_1}^{-1}=O_p(1)$. We need this slightly stronger assumption because it is unclear in the high-dimensional setting whether the variations $[Y,Y]^n_1$ coming from infinite activity jumps concentrate around those of $[Y,Y]_1$. 
It is acceptable for stock data in view of the empirical results in \cite[pp.~446--447]{AJ2014}. 

\begin{lemma}\label{lemma:cor-eigen}
Assume \ref{ass:F}--\ref{ass:R}. 
Fix a constant $\tau\in(0,1)$ and assume \eqref{ass:d-n}. Then $\lambda_j({R}_{Y,n})/d^\alpha\to^p\infty$ as $n\to\infty$ for any $j\leq r_\tau$ and $\alpha<\alpha_j$. Moreover, $\max_{j:j>r_\tau}\lambda_j({R}_{Y,n})=O_p(d^\tau)$ as $n\to\infty$.
\end{lemma}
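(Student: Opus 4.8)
The plan is to reduce the statement to two-sided control of the diagonal entries of $C_n:=[Y,Y]^n_1$ and then invoke Lemma \ref{lemma:cov-eigen}. Writing $D:=\diag(C_n)$ so that $R_{Y,n}=D^{-1/2}C_nD^{-1/2}$, a congruence (Ostrowski-type) inequality gives, for every $j$ and on the event $\{\min_kD^{kk}>0\}$,
\[
\frac{\lambda_j(C_n)}{\max_{1\le k\le d}D^{kk}}\ \le\ \lambda_j(R_{Y,n})\ \le\ \frac{\lambda_j(C_n)}{\min_{1\le k\le d}D^{kk}},
\]
using $C_n\succeq0$. Thus it suffices to prove: (i) $\max_kD^{kk}=\max_k[Y^k,Y^k]^n_1=O_p(L_n)$ for some deterministic $L_n$ of the form $O(f_*(d))$ with $f_*$ slowly varying; and (ii) $\min_kD^{kk}=\min_k[Y^k,Y^k]^n_1$ is bounded away from $0$ with probability tending to one (in particular $\min_kD^{kk}>0$ w.h.p.). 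Granting (i)--(ii): for $j>r_\tau$ the right-hand inequality, taken over $j>r_\tau$, together with Lemma \ref{lemma:cov-eigen}\ref{eigen-weak} gives $\max_{j>r_\tau}\lambda_j(R_{Y,n})\le\max_{j>r_\tau}\lambda_j(C_n)/\min_kD^{kk}=O_p(d^\tau)$; and for fixed $j\le r_\tau$ and $\alpha<\alpha_j$, the left-hand inequality, Lemma \ref{lemma:cov-eigen}\ref{eigen-strong} and (i) give $d^\alpha/\lambda_j(R_{Y,n})=O_p\bigl(d^\alpha L_n/d^{\alpha_j}\bigr)=O_p\bigl(L_n/d^{\alpha_j-\alpha}\bigr)=o_p(1)$, since \eqref{eq:svf} (with $\eps=\alpha_j-\alpha$) forces $L_n=o(d^{\alpha_j-\alpha})$; that is, $\lambda_j(R_{Y,n})/d^\alpha\to^p\infty$.

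For (i) I would use $[Y^k,Y^k]^n_1\le 2[(\beta F)^k,(\beta F)^k]^n_1+2[Z^k,Z^k]^n_1$. The factor part is $O_p(\Lambda_n^2)$ uniformly in $k$, because $[(\beta F)^k,(\beta F)^k]^n_1\le r\bigl(\max_l|\beta^{kl}|\bigr)^2\max_l[F^l,F^l]^n_1$ with $\max_l|\beta^{kl}|=O(\Lambda_n)$ by \ref{ass:R}(iii), $r$ and $F$ fixed, and $[F^l,F^l]^n_1\to[F^l,F^l]_1<\infty$. For $\max_k[Z^k,Z^k]^n_1$ I would decompose each $Z^k$ into its drift, continuous-martingale, compensated small-jump ($N^k:=(\delta_k1_{\{|\delta_k|\le1\}})\star(\mu-\nu)$) and uncompensated big-jump ($B^k:=(\delta_k1_{\{|\delta_k|>1\}})\star\mu$) parts, and bound the realized variance of each. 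The drift part is $O(\Lambda_n/n)$; for the continuous-martingale and small-jump parts, Burkholder--Davis--Gundy and Bichteler--Jacod (Rosenthal-type for Poisson integrals) moment inequalities, exploiting the uniform-in-$(k,n)$ bounds $\|\sigma_s\|^2\le\Lambda_nh_s$ and $\int_E\gamma_{m,k}^{(n)}(z)^2\levy(dz)\le\ol\gamma_{m,2}$ from \ref{ass:H}, give $L^p$-bounds growing at most polynomially in $p$; for $B^k$, its realized variance exceeds $[B^k,B^k]_1\le\norm{[Z,Z]_1}=O_p(\Lambda_n)$ only by within-block cross terms, which are dominated by $K_k[B^k,B^k]_1$ with $K_k:=1_{\{|\delta_k|>1\}}\star\mu_1$ the big-jump count, and (after replacing $\gamma_k$ by $\gamma_k\wedge1$, so that $1_{\{|\delta_k|>1\}}\le\gamma_k^2$) $K_k$ also has uniformly bounded, polynomial-in-$p$ moments. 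A union bound over $k=1,\dots,d$ with $p\asymp\log d$ then yields $\max_k[Z^k,Z^k]^n_1=O_p(\Lambda_n(\Lambda_n+\log d))$, so (i) holds with $L_n\asymp\Lambda_n^2+\Lambda_n\log d$, which is slowly varying by \ref{ass:svf}.

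For (ii) I would pass to a truncated realized variance: with $v_n=n^{-\varpi}$, $\varpi\in(0,1/2)$, one has $[Y^k,Y^k]^n_1\ge\wh{IV}_n^k:=\sum_{i=1}^n(\Delta^n_iY^k)^21_{\{|\Delta^n_iY^k|\le v_n\}}$, and the classical consistency of truncated realized variance gives $\wh{IV}_n^k\to^p[(Y^c)^k,(Y^c)^k]_1$. Here $Y^k=\sum_l\beta^{kl}F^l+Z^k$ is an It\^o semimartingale by \eqref{eq:ito} and \ref{ass:R}(ii), whose characteristics are bounded uniformly in $(k,n)$ up to the localizing times (again using $\ol\gamma_{m,2}<\infty$), so the $L^p$ error bounds for $\wh{IV}_n^k$ are uniform in $k$ with polynomial growth in $p$; a union bound with $p\asymp\log d$ then gives $\max_k|\wh{IV}_n^k-[(Y^c)^k,(Y^c)^k]_1|\to^p0$ (condition \eqref{ass:d-n} forces $\log d=O(\log n)$, so the polylogarithmic inflation from the union bound is absorbed by the truncation rate). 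Hence $\min_k[Y^k,Y^k]^n_1\ge\min_k[(Y^c)^k,(Y^c)^k]_1-o_p(1)$, and \ref{ass:R}(i) states precisely that $\min_k[(Y^c)^k,(Y^c)^k]_1=\bigl(\max_k([(Y^c)^k,(Y^c)^k]_1)^{-1}\bigr)^{-1}$ stays bounded away from $0$ with probability tending to one, which gives (ii). This also explains why \ref{ass:R}(i) is imposed on the continuous parts rather than on $[Y^j,Y^j]_1$: truncation is what makes the passage from $[Y^k,Y^k]^n_1$ to its limit robust, uniformly over the $d$ coordinates, in the presence of infinite-activity jumps.

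The main obstacle is obtaining (i) and (ii) with the correct $d$-dependence. The easy bound $\max_k[Z^k,Z^k]^n_1\le\norm{[Z,Z]^n_1}$ via Theorem \ref{localization} only yields $O_p(\Lambda_nd^\tau)$, which is not slowly varying and is too weak once $\alpha$ is close to $\alpha_j$; one genuinely needs one-dimensional, coordinatewise moment control whose $L^p$-norms grow at most polynomially in $p$, so that a union bound at $p\asymp\log d$ costs only a slowly varying factor. The subtle point there is the big-jump part $B^k$: it is not compensated and $\int_E|\delta_k(s,z)|1_{\{|\delta_k|>1\}}\levy(dz)$ may be infinite (cf.\ the Remark after \eqref{eq:ito}), so the estimate must be arranged so that only the jump count $K_k$, and not that integral, enters, after which $K_k$ is controlled through $1_{\{|\delta_k|>1\}}\le\gamma_k^2$.
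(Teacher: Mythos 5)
Your Ostrowski/congruence reduction to two-sided control of the diagonal entries is exactly the paper's reduction (stated as Lemma~\ref{lemma:diag} and applied via Ostrowski's theorem), and your handling of the upper bound (i) is in the same spirit as the paper's (coordinatewise Rosenthal-type moment bounds, union bound at $p\asymp\log d$, bounded factor terms through \ref{ass:R}(iii)), though the paper treats $[J^j,J^j]^n_1$ directly via the Rio--Tyran Rosenthal inequality for the Poisson-integral martingale rather than splitting off big jumps and controlling the jump count.

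For the lower bound (ii) there is a genuine gap. You propose to pass to a truncated realized variance $\wh{IV}^k_n$ and invoke ``classical consistency'' together with an $L^p$ error bound, uniform in $k$, with polynomial growth in $p$. But under \ref{ass:H} alone there is no rate for the small-jump contribution that survives the truncation: the condition $\sup_n\max_j\int_E\gamma^{(n)}_{m,j}(z)^2\,\levy(dz)<\infty$ gives no uniform integrability, no Blumenthal--Getoor bound, and $\gamma^{(n)}_{m,j}$ is allowed to vary with $n$, so $\int_{\{\gamma^{(n)}_{m,j}\le c v_n\}}(\gamma^{(n)}_{m,j})^2\,\levy(dz)$ need not vanish at any rate. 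The paper itself flags this as the reason \ref{ass:R}(i) is phrased in terms of $[(Y^c)^j,(Y^c)^j]_1$: ``it is unclear in the high-dimensional setting whether the variations $[Y,Y]^n_1$ coming from infinite activity jumps concentrate around those of $[Y,Y]_1$.'' Asserting a uniform $L^p$ rate for $\wh{IV}^k_n-[(Y^c)^k,(Y^c)^k]_1$ is essentially claiming such a concentration, and it does not follow from the hypotheses.

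The paper's actual route sidesteps this completely. Write $Y^j=Y^j_0+\wt D^j+M^j+J^j$ (with $M=Y^c$ the continuous martingale part and $J$ the compensated-jump martingale). Expanding,
\[
[Y^j,Y^j]^n_1=[J^j,J^j]^n_1+\bigl([X^j,X^j]^n_1-[M^j,M^j]_1\bigr)+2[\wt D^j,J^j]^n_1+2[M^j,J^j]^n_1+[M^j,M^j]_1,
\]
with $X=\wt D+M$. Each cross term and the purely continuous term carries a $1/\sqrt n$-type rate (Bernstein for $[X^j,X^j]^n_1-[M^j,M^j]_1$; Cauchy--Schwarz and a Rosenthal inequality with $p\asymp\log d$ for the cross terms with $J$, since those are mixed with the continuous or drift part); so $R:=\max_j\big|[Y^j,Y^j]^n_1-[J^j,J^j]^n_1-[M^j,M^j]_1\big|=o_p(1)$. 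Then one simply drops the nonnegative $[J^j,J^j]^n_1$: $[Y^j,Y^j]^n_1\ge[M^j,M^j]_1-R$, and \ref{ass:R}(i) gives $\min_j[M^j,M^j]_1$ bounded away from $0$ in probability. No rate for $[J^j,J^j]^n_1$ itself is ever needed. This is the insight your proposal is missing: on the lower-bound side, use positivity of the realized variance of the jump part rather than truncation.
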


The next proposition can be shown in the same way as Propositions \ref{prop:bn} and \ref{prop:pelger}, so we omit the proof. 
\begin{proposition}\label{prop:rhat-cor}
Suppose that $g$ is slowly varying and $g(d)\to\infty$ as $d\to\infty$. Then, under the assumptions of Lemma \ref{lemma:cor-eigen}, $P(\hat r^{BN,cor}_\tau=r_\tau)\to1$ as $n\to\infty$. 
Also, for any fixed $\gamma>0$, $P(\hat r^{P,cor}_\tau(\gamma)=r_\tau)\to1$ as $n\to\infty$. 
\end{proposition}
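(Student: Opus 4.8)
The plan is to obtain Proposition \ref{prop:rhat-cor} as a purely deterministic consequence of the eigenvalue estimates in Lemma \ref{lemma:cor-eigen}, reproducing verbatim the arguments used for Propositions \ref{prop:bn} and \ref{prop:pelger} with $R_{Y,n}$ in place of ${[Y,Y]}^n_1$ and Lemma \ref{lemma:cor-eigen} in place of Lemma \ref{lemma:cov-eigen}. Recall that Lemma \ref{lemma:cor-eigen} gives, under the stated assumptions, $\lambda_j(R_{Y,n})/d^\alpha\to^p\infty$ for every $j\le r_\tau$ and every $\alpha<\alpha_j$, while $\max_{j>r_\tau}\lambda_j(R_{Y,n})=O_p(d^\tau)$. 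The only analytic ingredient beyond these two facts is property \eqref{eq:svf} of slowly varying functions.

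First I would treat $\hat r^{BN,cor}_\tau$. Suppose first that $r_\tau\ge1$ and fix $\alpha\in(\tau,\alpha_{r_\tau})$, which exists because $\alpha_{r_\tau}>\tau$ by definition of $r_\tau$. Writing
\[
\frac{\lambda_{r_\tau}(R_{Y,n})}{d^\tau g(d)}=\frac{\lambda_{r_\tau}(R_{Y,n})}{d^\alpha}\cdot\frac{d^{\alpha-\tau}}{g(d)},
\]
the first factor tends to $\infty$ in probability by Lemma \ref{lemma:cor-eigen}, and the second tends to $\infty$ since $g$ is slowly varying and $\alpha-\tau>0$ (apply \eqref{eq:svf} with $\eps=\alpha-\tau$); hence $\lambda_j(R_{Y,n})\ge\lambda_{r_\tau}(R_{Y,n})>d^\tau g(d)$ with probability tending to one, for all $j\le r_\tau$. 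On the other hand $\max_{j>r_\tau}\lambda_j(R_{Y,n})=O_p(d^\tau)$ and $g(d)\to\infty$, so $\lambda_j(R_{Y,n})\le d^\tau g(d)$ with probability tending to one for every $j\in\{r_\tau+1,\dots,r_{max}\}$. Combining the two bounds shows $\hat r^{BN,cor}_\tau=r_\tau$ on an event of probability tending to one. When $r_\tau=0$ only the second bound is needed, and it gives $\hat r^{BN,cor}_\tau=0=r_\tau$ (using the convention $\max\emptyset=0$).

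For $\hat r^{P,cor}_\tau(\gamma)$ I would argue analogously with the perturbed ratios. For $r_\tau<j$ (in the admissible range), both $\lambda_j(R_{Y,n})$ and $\lambda_{j+1}(R_{Y,n})$ are bounded by $\max_{k>r_\tau}\lambda_k(R_{Y,n})=O_p(d^\tau)$, so
\[
ER_j(R_{Y,n})=1+\frac{\lambda_j(R_{Y,n})-\lambda_{j+1}(R_{Y,n})}{\lambda_{j+1}(R_{Y,n})+d^\tau g(d)}\le1+\frac{O_p(d^\tau)}{d^\tau g(d)},
\]
which is $<1+\gamma$ with probability tending to one since $g(d)\to\infty$. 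If $r_\tau\ge1$, then for $j=r_\tau$ the numerator of $ER_{r_\tau}(R_{Y,n})$ is at least $\lambda_{r_\tau}(R_{Y,n})$, which dominates $d^\tau g(d)$ exactly as in the second paragraph, while the denominator is $\lambda_{r_\tau+1}(R_{Y,n})+d^\tau g(d)=O_p(d^\tau g(d))$; hence $ER_{r_\tau}(R_{Y,n})\to^p\infty$ and in particular exceeds $1+\gamma$ with probability tending to one. Together these show $\hat r^{P,cor}_\tau(\gamma)=r_\tau$ with probability tending to one, and when $r_\tau=0$ the set defining the estimator is empty with probability tending to one, again giving the conclusion.

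Since every probabilistic estimate invoked above is supplied by Lemma \ref{lemma:cor-eigen}, this argument has no genuine obstacle: the substantive work lies entirely in establishing Lemma \ref{lemma:cor-eigen}, which in turn rests on Theorem \ref{localization} applied to $[Z,Z]^n_1$ together with the control of $\diag([Y,Y]^n_1)$ afforded by \ref{ass:R}. The only points demanding mild care here are the bookkeeping in the degenerate case $r_\tau=0$ and the repeated use of \eqref{eq:svf} to compare $d^\tau g(d)$ with powers $d^\alpha$ for $\alpha>\tau$; both are routine, which is why the proof is omitted in the text.
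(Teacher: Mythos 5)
Your argument is correct and is precisely the route the paper intends: it transcribes the proofs of Propositions~\ref{prop:bn} and \ref{prop:pelger}, substituting $R_{Y,n}$ for $[Y,Y]^n_1$ and Lemma~\ref{lemma:cor-eigen} for Lemma~\ref{lemma:cov-eigen}, and you correctly observe that only the lower-bound half of Lemma~\ref{lemma:cor-eigen} (weaker than the two-sided $\asymp_p$ of Lemma~\ref{lemma:cov-eigen}\ref{eigen-strong}) is actually needed. The bookkeeping for $r_\tau=0$ and the uniformity over $j\in\{r_\tau+1,\dots,r_{max}\}$ are both handled properly.
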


\section{Simulation study}\label{sec:simulation}

In this section we conduct a simulation study to assess the finite sample performance of the estimators proposed in the previous section. 
We focus on the case $\tau=1/2$ since this case is practically most relevant as demonstrated in Section 3.2 of \cite{Fr22}.

\subsection{Implementation of estimators}


To implement the proposed estimators, we need to specify the function $g(d)$ and the parameter $\gamma$ for the case of the perturbed eigenvalue ratio based estimator. We first discuss this issue. 
We set $r_{max}=20$ throughout this section. 

For the estimator $\hat r_\tau^{BN}$, let us recall that it can be seen as a modification of the $PC_{p1}$ estimator of \cite{BN2002}. Motivated by this, we set $g(d)=\hat\sigma^2\sqrt{\log\log d}$ with
\ben{\label{eq:sigma2hat}
\hat\sigma^2=\sum_{j=r_{max}+1}^d\lambda_j([Y,Y]_1^n).
} 
Here, the choice of the coefficient $\sqrt{\log\log d}$ is motivated by \cite{Fr22}: For most relevant values of $d$, the values of $\sqrt{\log\log d}$ are only slightly larger than 1, so it has little impact on the effect of $g(d)$. 
Therefore, this choice is useful to focus on the influence of other parameters. 
We use the covariance matrix based version $\hat r_\tau^{BN}$ because preliminary (unreported) simulation results suggest that it would outperform the correlation matrix based one. 

For the estimator $\hat r_\tau^{P}(\gamma)$, preliminary experiments suggest that the correlation matrix based version would outperform, so we focus on the estimator $\hat r_\tau^{P,cor}(\gamma)$. 
Regarding the choice of $g(d)$, \citet{Pe19} suggests using (with $\tau=1/2$)
\ben{\label{eq:pelger-tune}
g(d)=\mathrm{median}\{\lambda_1(R_{Y,n}),\dots,\lambda_d(R_{Y,n})\},
}
and we will later see that this choice performs very well when the sample size is moderate. 
Unfortunately, however, this choice is problematic when $d>2n$ because the rank of $R_{Y,n}$ is smaller than $d/2$ in this case and thus we always have $g(d)=0$. 
For this reason, we simply set $g(d)=\sqrt{\log\log d}$. 
Indeed, as already discussed in \cite{Pe19}, the choice of $g(d)$ is less important; the choice of $\gamma$ is more important. We use $\gamma=0.05$ in our experiments. 
We show the sensitivity of $\hat r^{P,cor}_\tau(\gamma)$ to the choice of these tuning parameters in Figures \ref{fig:perturb} and \ref{fig:gamma} of Appendix \ref{sec:a-sim}.

For comparison, we also compute the following three estimators:
\begin{itemize}

\item \textbf{\citet{BN2002}'s $\bs{PC_{p1}}$ estimator:} This is defined by the following formula (see Eq.(14) of \cite{Fr22}):
\[
PC_{p1}:=\max\cbra{j\in\{1,\dots,r_{max}\}:\lambda_j([Y,Y]^n_1)>\hat\sigma^2\bra{1+\frac{d}{n}}\log\bra{\frac{dn}{d+n}}}.
\]
Here, $\hat\sigma^2$ is defined as \eqref{eq:sigma2hat}. 

\item \textbf{\citet{Pe19}'s estimator:} This is the estimator $\hat r_\tau^{P,cor}(\gamma)$ but we use the same tuning parameters as in \cite{Pe19}. That is, $g(d)$ is defined by \eqref{eq:pelger-tune} and $\gamma=0.2$. 
Note that we use the correlation matrix based one because unreported simulation results suggest its superior performance. 

\item \textbf{\citet{On10}'s estimator:} 
This is constructed by thresholding the differences between successive eigenvalues:
\[
\hat r^O(\delta):=\max\cbra{j\in\{1,\dots,r_{max}\}:\lambda_j(R_{Y,n})-\lambda_{j+1}(R_{Y,n})\geq\delta}.
\]
Here, the threshold parameter $\delta$ is calibrated by the ED algorithm described in \cite[Section IV]{On10}. 
Note that we again use the correlation matrix based estimator by the same reasoning as above. 

\end{itemize}
There is another popular estimator proposed by \citet{AhHo13} that is based on (non-perturbed) eigenvalue ratio statistics. 
However, the recent studies show that it severely underestimates the number of factors in the presence of a dominant factor; see e.g.~\cite{Pe19,Fr22}. We also observed the same phenomenon in our simulation setting, so we omit this estimator from tables below. 
%
%
%
%

\subsection{Simulation design}\label{sec:sim-design}

We basically follow the setting of \citet{Fr22} but adapt some parts to our continuous-time setting. 
First, we employ the same loading matrix $\beta$ as in \cite{Fr22}: The number of latent factors is $r=9$. 
For each $j\in\{1,\dots,9\}$, we randomly select $d_j$ entries of the $j$-th column of $\beta$ and fill them with i.i.d.~normal variables with mean 1 and variance 1. All other entries are set to 0. Here,
\ba{
d_1&=d,&
d_2&=d^{0.85},&
d_3&=d^{0.75},&
d_4&=d^{2/3},&
d_5&=d^{2/3},\\
d_6&=d^{0.6},&
d_7&=d^{1/3},&
d_8&=d^{1/4},&
d_9&=\log d.
}
If $d_j$ is not an integer, it is rounded to the nearest integer. 
Since we consider the case $\tau=1/2$, the number of relevant factors is $r_\tau=6$. 

For the factor process $F$, we assume 9 factors are independent, and each factor is generated by the following stochastic volatility model:
\ba{
\begin{cases}
dF^j_t=\mu_jdt+\rho_j\sigma^j_tdB^j_t+\sqrt{1-\rho_j^2}\sigma^j_tdW^j_t,\\
\sigma^j_t=\exp(a_{j}+b_{j}\varrho^j_t),\quad
d\varrho^j_t=-\kappa_j\varrho^j_tdt+dB^j_t,
\end{cases}
t\in[0,1],~j=1,\dots,9,
}
where $B^j$ and $W^j$ are two independent standard Wiener processes and $\mu_j,\rho_j,a_j,b_j$ and $\kappa_j$ are parameters specified below. 
This model has been conventionally used in the literature of high-frequency financial econometrics; see e.g.~\cite{HT2005,BNHLS2011}. 
The initial value $\varrho^j_0$ is generated from $N(0,(2\kappa_j)^{-1})$, the stationary distribution of $\varrho^j$. 
We use the same parameter values for all factors that are given by
\[
\mu_j=0.03,\quad
a_j=-5/16,\quad
b_j=1/8,\quad
\kappa_j=1/40,\quad
\rho_j=-0.3,
\]
following \cite{BNHLS2011}. In particular, we have $\E\sbra{[F^j,F^j]_1}=\E\sbra{\int_0^1(\sigma^j_t)^2dt}=1$ for all $j$. 


Finally, we model the idiosyncratic process $Z$ as $Z_t=\sqrt{\theta}AL_t$, where $\theta>0$ is a constant,  $A$ is a lower triangular matrix such that $AA^\top$ is the Toeplitz matrix $(\phi^{|j-k|})_{1\leq j,k\leq d}$ with $\phi\in(0,1)$ a constant, and $L=(L_t)_{t\in[0,1]}$ is either a $d$-dimensional standard Wiener process or a $d$-dimensional centered symmetric normal tempered stable (NTS) process (cf.~\cite[Section 4.4]{CT2004}) independent of the factor process. 
More concretely, for the latter case, $L$ is a $d$-dimensional L\'evy process with $L_1$ having the same law as $\sqrt{V}\zeta$, where $V$ and $\zeta$ are independent, $\zeta\sim N(0,1)$ and $V$ follows the positive tempered stable distribution $PTS(\alpha,c,\lambda)$ with some constants $c,\lambda>0$ and $\alpha\in(0,1)$, i.e. $\log\E[e^{-uV}]=c\Gamma(-\alpha)\{(\lambda+u)^\alpha-\lambda^\alpha\}$ for every $u>0$. 
The main advantage of using NTS processes is that they allow for Blumenthal--Getoor (BG) indices arbitrarily close to 2 while their exact simulation is possible. 
In fact, one can see that the BG index of $L$ is equal to $2\alpha$ from the expression of its L\'evy measure (cf. Eq.(4.25) of \cite{CT2004}). 
Moreover, $L_{1/n}$ has the same law as $\sqrt{V_n}\zeta$ where $V_n\sim PTS(\alpha,c/n,\lambda)$ and $\zeta\sim N(0,1)$ are independent and $V_n$ can be simulated exactly (cf.~\cite{KM2011}); hence we can exactly simulate the process $L$ as well. This is implemented in the function \texttt{rnts()} of the R package \texttt{yuima} which we used for simulating $L$. 

We vary $\alpha\in\{0.25,0.50,0.75\}$ to assess the effect of jump activity. 
We set $c=\lambda^{1-\alpha}/\Gamma(1-\alpha)$ and $\lambda=1-\alpha$ so that $\E[V]=\variance[V]=1$. This particularly implies $\E\sbra{[L^j,L^j]_1}=1$ for all $j$; hence the parameter $\theta$ can be interpreted as a signal-to-noise ratio as in \cite{Fr22}, so we set $\theta=1.5$ following \cite{Fr22}. 
Also, the parameter $\phi$ plays the same role as $\beta$ in \cite{Fr22}, so we set $\phi=0.1$. 
The effects of changes in $\theta$ and $\phi$ are shown in Figures \ref{fig:snr} and \ref{fig:phi} of Appendix \ref{sec:a-sim}. 

We vary the sample size $n$ as $n\in\{26,78,390\}$. These numbers correspond to sampling data at 15, 5, 1 minute frequencies, respectively, in a market open for 6.5 hours. 
The dimension $d$ is varied as $d\in\{100,500,1000,1500\}$, corresponding to typical numbers of constituents of stock indices. 

\subsection{Results}

Tables \ref{table:wiener}--\ref{table:alpha=0.75} report the simulation results: For each estimator $\hat r$, we report the empirical mean of $\hat r$ and the empirical probability that $\hat r=6$ (in parentheses) based on 1,000 Monte Carlo iterations. 
Table \ref{table:wiener} is for the case that $L$ is a $d$-dimensional standard Wiener process and Tables \ref{table:alpha=0.25}--\ref{table:alpha=0.75} are for the case that $L$ is a $d$-dimensional NTS process with stability index $\alpha\in\{0.25,0.50,0.75\}$. 

First we focus on $\hat r^{P,cor}_\tau(\gamma)$. 
As the tables reveal, its performance becomes better as $d$ and/or $n$ increase. This is in line with our asymptotic theory. 
Although it tends to underestimate the number of (relevant) factors, it gives reasonable estimates when $d$ and $n$ are relatively large. This underestimation seems to be caused by stochastic volatilities of factor processes. That is, the volatility of a factor process could be very low at some simulation runs, and such a factor would be difficult to be identified. In Appendix \ref{sec:a-sim} we report the results for the case that factors are Wiener processes, where we find much better performance. 
Finally, jump activity of the idiosyncratic process seems to have little impact on the performance. 

Next we focus on $\hat r^{BN}_\tau$. Although it shows better performance for larger $d$ and $n$ as predicted by our asymptotic theory, its performance is unstable even for relatively large values of $d$ and $n$. We might be able to resolve this problem by selecting the function $g(d)$ more carefully, but we have no theoretical guidance, making this task difficult. 
We remark that such sensitivity is a common issue in thresholding based estimators; see \cite[p.94]{Fr22}. 

Finally, we discuss the performance of the remaining estimators. 
Although they are not designed to estimate the number of relevant factors, they perform reasonably well in some situations. 
In particular, \citet{Pe19}'s and \citet{On10}'s estimators are competitive with $\hat r^{P,cor}_\tau(\gamma)$ when $n=78$. However, their performance becomes worse when $n=390$, suggesting their inconsistency. 
We remark that \citet{On10}'s estimator is designed to estimate the number of \textit{all} factors; hence, in our situation, it might detect the factors $F^7,F^8$ and $F^9$ when both $n$ and $d$ are sufficiently large. While this is not necessarily the case as our model does not satisfy the mathematical assumptions of \cite{On10}, the simulation results reveal that the estimate of \citet{On10}'s increases as $n$ and $d$ increase. 
Turning to the $PC_{p1}$ estimator, it relatively performs well when $L$ is a Wiener process, but it becomes unreliable when $L$ is a NTS process. This is of course not surprising as this estimator is not theoretically justified for our model. 

Overall, we recommend the use of $\hat r^{P,cor}_\tau(\gamma)$ with reasonably large sample sizes, although we need to remember that it tends to underestimate the number of relevant factors with the tuning parameters suggested above. 
Also, despite the lack of theoretical validity, the conventional estimators proposed by \citet{On10} and \citet{Pe19} seem still reliable for moderate sample sizes. 
Interestingly, this parallels to the observation made by \citet{Pe19,Pe20} where \citet{On10}'s estimator is found to be reliable even for high-frequency data. 
Due to the presence of microstructure noise, it would be the most realistic situation that the sample size of intraday high-frequency data is around $n=78$. 
Hence these estimators would be useful in empirical studies for the purpose of robustness check.

\begin{table}[h]
\centering
\small
\caption{Simulation results when $L$ is a Wiener process}\label{table:wiener}
\begin{tabular}{*{11}{r}}
  \hline
$d$ & \multicolumn{2}{c}{$\hat r^{BN}_\tau$} & \multicolumn{2}{c}{$\hat r^{P,cor}_\tau(\gamma)$} & \multicolumn{2}{c}{$PC_{p1}$} & \multicolumn{2}{c}{\citet{Pe19}} & \multicolumn{2}{c}{\citet{On10}} \\ 
  \hline
  & \multicolumn{10}{c}{$n=26$} \\
  100 & 20.00 & (0.00) & 4.63 & (0.17) & 20.00 & (0.00) & 9.90 & (0.11) & 2.37 & (0.02) \\ 
  500 & 20.00 & (0.00) & 4.74 & (0.20) & 20.00 & (0.00) & 4.34 & (0.14) & 3.47 & (0.09) \\ 
  1000 & 20.00 & (0.00) & 4.85 & (0.25) & 20.00 & (0.00) & 4.26 & (0.13) & 3.86 & (0.14) \\ 
  1500 & 20.00 & (0.00) & 4.91 & (0.28) & 20.00 & (0.00) & 4.20 & (0.12) & 4.14 & (0.19) \\ 
  & \multicolumn{10}{c}{$n=78$} \\
  100 & 5.88 & (0.41) & 4.73 & (0.21) & 6.98 & (0.23) & 4.32 & (0.17) & 3.75 & (0.17) \\ 
  500 & 5.81 & (0.46) & 5.29 & (0.49) & 5.73 & (0.45) & 5.52 & (0.58) & 5.36 & (0.55) \\ 
  1000 & 6.01 & (0.49) & 5.44 & (0.58) & 5.29 & (0.36) & 5.53 & (0.62) & 5.62 & (0.67) \\ 
  1500 & 6.10 & (0.55) & 5.51 & (0.62) & 5.03 & (0.28) & 5.54 & (0.65) & 5.72 & (0.73) \\ 
  & \multicolumn{10}{c}{$n=390$} \\
  100 & 4.75 & (0.20) & 5.09 & (0.30) & 6.62 & (0.33) & 4.23 & (0.19) & 5.51 & (0.33) \\ 
  500 & 4.81 & (0.20) & 5.65 & (0.61) & 6.43 & (0.43) & 5.66 & (0.62) & 6.57 & (0.43) \\ 
  1000 & 4.94 & (0.26) & 5.75 & (0.71) & 6.20 & (0.48) & 6.33 & (0.58) & 6.65 & (0.43) \\ 
  1500 & 4.97 & (0.27) & 5.78 & (0.76) & 5.97 & (0.53) & 6.21 & (0.69) & 6.59 & (0.49) \\ 
   \hline
\end{tabular}\vspace{2mm}

\parbox{13cm}{\small
\textit{Note}. For each estimator $\hat r$, this table reports the empirical mean of $\hat r$ and the empirical probability that $\hat r=6$ (in parentheses) based on 1,000 Monte Carlo iterations. 
} 
\end{table}

\begin{table}[ht]
\centering
\small
\caption{Simulation results when $L$ is a NTS process with $\alpha=0.25$}
\label{table:alpha=0.25}
\begin{tabular}{rrrrrrrrrrr}
  \hline
  $d$ & \multicolumn{2}{c}{$\hat r^{BN}_\tau$} & \multicolumn{2}{c}{$\hat r^{P,cor}_\tau(\gamma)$} & \multicolumn{2}{c}{$PC_{p1}$} & \multicolumn{2}{c}{\citet{Pe19}} & \multicolumn{2}{c}{\citet{On10}} \\ 
  \hline
  & \multicolumn{10}{c}{$n=26$} \\
100 & 20.00 & (0.00) & 5.14 & (0.26) & 20.00 & (0.00) & 13.77 & (0.07) & 2.70 & (0.05) \\ 
  500 & 20.00 & (0.00) & 5.26 & (0.41) & 20.00 & (0.00) & 5.14 & (0.37) & 4.06 & (0.22) \\ 
  1000 & 20.00 & (0.00) & 5.34 & (0.48) & 20.00 & (0.00) & 5.05 & (0.37) & 4.47 & (0.32) \\ 
  1500 & 20.00 & (0.00) & 5.38 & (0.50) & 20.00 & (0.00) & 4.97 & (0.35) & 4.69 & (0.35) \\ 
  & \multicolumn{10}{c}{$n=78$} \\
  100 & 10.93 & (0.00) & 5.17 & (0.31) & 14.61 & (0.00) & 5.31 & (0.32) & 4.09 & (0.21) \\ 
  500 & 9.04 & (0.03) & 5.61 & (0.65) & 8.66 & (0.04) & 5.87 & (0.67) & 5.60 & (0.61) \\ 
  1000 & 9.37 & (0.01) & 5.70 & (0.73) & 6.31 & (0.38) & 5.83 & (0.79) & 5.79 & (0.71) \\ 
  1500 & 10.21 & (0.00) & 5.76 & (0.78) & 5.51 & (0.42) & 5.83 & (0.82) & 5.86 & (0.80) \\ 
  & \multicolumn{10}{c}{$n=390$} \\
  100 & 7.76 & (0.14) & 5.45 & (0.38) & 15.00 & (0.00) & 5.13 & (0.33) & 5.53 & (0.31) \\ 
  500 & 5.49 & (0.35) & 5.83 & (0.70) & 12.02 & (0.00) & 6.07 & (0.69) & 6.41 & (0.52) \\ 
  1000 & 5.28 & (0.34) & 5.89 & (0.81) & 9.12 & (0.04) & 6.42 & (0.59) & 6.57 & (0.53) \\ 
  1500 & 5.19 & (0.32) & 5.91 & (0.85) & 7.59 & (0.18) & 6.31 & (0.67) & 6.54 & (0.57) \\ 
   \hline
\end{tabular}\vspace{2mm}

\parbox{13cm}{\small
\textit{Note}. For each estimator $\hat r$, this table reports the empirical mean of $\hat r$ and the empirical probability that $\hat r=6$ (in parentheses) based on 1,000 Monte Carlo iterations. 
} 
\end{table}

\begin{table}[ht]
\centering
\small
\caption{Simulation results when $L$ is a NTS process with $\alpha=0.50$}
\label{table:alpha=0.5}
\begin{tabular}{rrrrrrrrrrr}
  \hline
  $d$ & \multicolumn{2}{c}{$\hat r^{BN}_\tau$} & \multicolumn{2}{c}{$\hat r^{P,cor}_\tau(\gamma)$} & \multicolumn{2}{c}{$PC_{p1}$} & \multicolumn{2}{c}{\citet{Pe19}} & \multicolumn{2}{c}{\citet{On10}} \\ 
  \hline
  & \multicolumn{10}{c}{$n=26$} \\
100 & 20.00 & (0.00) & 5.03 & (0.26) & 20.00 & (0.00) & 13.01 & (0.09) & 2.59 & (0.04) \\ 
  500 & 20.00 & (0.00) & 5.16 & (0.36) & 20.00 & (0.00) & 5.04 & (0.32) & 3.94 & (0.19) \\ 
  1000 & 20.00 & (0.00) & 5.26 & (0.44) & 20.00 & (0.00) & 4.95 & (0.31) & 4.41 & (0.28) \\ 
  1500 & 20.00 & (0.00) & 5.33 & (0.48) & 20.00 & (0.00) & 4.87 & (0.30) & 4.64 & (0.33) \\ 
  & \multicolumn{10}{c}{$n=78$} \\
  100 & 10.10 & (0.00) & 5.08 & (0.30) & 13.62 & (0.00) & 5.08 & (0.30) & 3.99 & (0.21) \\ 
  500 & 8.82 & (0.03) & 5.52 & (0.59) & 8.47 & (0.05) & 5.81 & (0.65) & 5.58 & (0.59) \\ 
  1000 & 9.22 & (0.03) & 5.65 & (0.71) & 6.34 & (0.38) & 5.80 & (0.76) & 5.73 & (0.71) \\ 
  1500 & 9.94 & (0.01) & 5.73 & (0.77) & 5.55 & (0.41) & 5.81 & (0.80) & 5.86 & (0.80) \\ 
  & \multicolumn{10}{c}{$n=390$} \\
  100 & 7.32 & (0.18) & 5.39 & (0.35) & 13.57 & (0.00) & 4.98 & (0.31) & 5.46 & (0.32) \\ 
  500 & 5.51 & (0.35) & 5.80 & (0.68) & 11.68 & (0.00) & 5.97 & (0.69) & 6.41 & (0.51) \\ 
  1000 & 5.33 & (0.33) & 5.88 & (0.79) & 9.17 & (0.04) & 6.42 & (0.57) & 6.51 & (0.52) \\ 
  1500 & 5.23 & (0.33) & 5.89 & (0.85) & 7.85 & (0.15) & 6.29 & (0.68) & 6.48 & (0.58) \\ 
   \hline
\end{tabular}\vspace{2mm}

\parbox{13cm}{\small
\textit{Note}. For each estimator $\hat r$, this table reports the empirical mean of $\hat r$ and the empirical probability that $\hat r=6$ (in parentheses) based on 1,000 Monte Carlo iterations. 
} 
\end{table}

\begin{table}[ht]
\centering
\small
\caption{Simulation results when $L$ is a NTS process with $\alpha=0.75$}
\label{table:alpha=0.75}
\begin{tabular}{rrrrrrrrrrr}
  \hline
  $d$ & \multicolumn{2}{c}{$\hat r^{BN}_\tau$} & \multicolumn{2}{c}{$\hat r^{P,cor}_\tau(\gamma)$} & \multicolumn{2}{c}{$PC_{p1}$} & \multicolumn{2}{c}{\citet{Pe19}} & \multicolumn{2}{c}{\citet{On10}} \\ 
  \hline
  & \multicolumn{10}{c}{$n=26$} \\
100 & 20.00 & (0.00) & 4.86 & (0.22) & 20.00 & (0.00) & 12.01 & (0.10) & 2.52 & (0.04) \\ 
  500 & 20.00 & (0.00) & 5.07 & (0.32) & 20.00 & (0.00) & 4.83 & (0.26) & 3.77 & (0.16) \\ 
  1000 & 20.00 & (0.00) & 5.13 & (0.37) & 20.00 & (0.00) & 4.73 & (0.25) & 4.22 & (0.23) \\ 
  1500 & 20.00 & (0.00) & 5.20 & (0.41) & 20.00 & (0.00) & 4.67 & (0.24) & 4.49 & (0.28) \\ 
  & \multicolumn{10}{c}{$n=78$} \\
  100 & 8.75 & (0.04) & 4.97 & (0.28) & 11.68 & (0.00) & 4.82 & (0.26) & 3.83 & (0.19) \\ 
  500 & 8.34 & (0.06) & 5.49 & (0.59) & 8.06 & (0.08) & 5.71 & (0.65) & 5.53 & (0.60) \\ 
  1000 & 8.88 & (0.04) & 5.60 & (0.67) & 6.41 & (0.36) & 5.71 & (0.71) & 5.75 & (0.71) \\ 
  1500 & 9.80 & (0.01) & 5.66 & (0.71) & 5.62 & (0.40) & 5.72 & (0.75) & 5.82 & (0.77) \\ 
  & \multicolumn{10}{c}{$n=390$} \\
  100 & 6.58 & (0.23) & 5.34 & (0.34) & 11.27 & (0.00) & 4.80 & (0.28) & 5.49 & (0.33) \\ 
  500 & 5.61 & (0.33) & 5.77 & (0.67) & 10.72 & (0.01) & 5.89 & (0.66) & 6.49 & (0.47) \\ 
  1000 & 5.45 & (0.33) & 5.84 & (0.76) & 9.19 & (0.05) & 6.39 & (0.57) & 6.57 & (0.49) \\ 
  1500 & 5.33 & (0.35) & 5.86 & (0.81) & 8.03 & (0.12) & 6.28 & (0.68) & 6.56 & (0.51) \\ 
   \hline
\end{tabular}\vspace{2mm}

\parbox{13cm}{\small
\textit{Note}. For each estimator $\hat r$, this table reports the empirical mean of $\hat r$ and the empirical probability that $\hat r=6$ (in parentheses) based on 1,000 Monte Carlo iterations. 
} 
\end{table}

\clearpage

\section{Proofs}\label{sec:proof}

We begin by introducing additional notation used in the proofs. 
\begin{itemize}

\item For a random variable $\xi$ and $p>0$, we set $\|\xi\|_p:=(\E[|\xi|^p])^{1/p}$. 

\item For a random matrix $X$ and $p>0$, we set $\|X\|_{L^p(\sch^\infty)}:=\|\|X\|\|_p$.

\item For a process $V=(V_t)_{t\in[0,1]}$ taking values in $\mathbb R^d$ or $\mathbb R^{d\times d}$, we set $\Delta^n_iV:=V_{i/n}-V_{(i-1)/n}$ for $i=1,\dots,n$. If $V$ takes values in $\mathbb R^d$, we further set $\Delta^nV:=(\Delta^n_1V,\dots,\Delta^n_nV)$ which is a $d\times n$ random matrix. Further, given two $d$-dimensional processes $U$ and $V$, we set
\[
{[U,V]}^n_1:=\Delta^nU(\Delta^nV)^\top=\sum_{i=1}^n\Delta^n_iU(\Delta^n_iV)^\top.
\]

\item For a $d\times n$ matrix $A$, we write its singular values as $\mf s_1(A)\geq\cdots\geq\mf s_{d\wedge n}(A)$. 

\item For two numbers or random variables $\xi$ and $\eta$, we write $\xi\lesssim\eta$ if there is a positive universal constant $C$ such that $\xi\leq C\eta$. 

\end{itemize}

\subsection{Proofs for Section \ref{sec:main}}\label{sec:proof2}


\subsubsection{Proof of Theorem \ref{cont-bound}}\label{sec:pr-cont}

The proof relies on two general martingale inequalities. 
The first one is the Burkholder--Davis--Gundy inequalities with sharp constants:
\begin{lemma}\label{sharp-BDG}
Let $M=(M_t)_{t\in[0,1]}$ be a continuous local martingale with $M_0=0$. For any $p\in[1,\infty)$ and $t\in[0,1]$, 
\[
\left\|M_t\right\|_p\leq 2\sqrt{p}\left\|\langle M,M\rangle_t^{1/2}\right\|_p.
\]
\end{lemma}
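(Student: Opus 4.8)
The plan is to bound $M_t$ via its Dol\'eans--Dade exponential and then integrate the resulting (conditional) sub-Gaussian estimate. Since $M$, hence $\langle M,M\rangle$, is continuous, for every $\lambda\in\mathbb R$ the process $\mathcal E^\lambda_t:=\exp\!\big(\lambda M_t-\tfrac{\lambda^2}{2}\langle M,M\rangle_t\big)$ is a nonnegative continuous local martingale with $\mathcal E^\lambda_0=1$, so it is a supermartingale and $\E\big[\exp(\lambda M_t-\tfrac{\lambda^2}{2}\langle M,M\rangle_t)\big]\le1$ for all $\lambda$ and $t$ (alternatively, localize along $T_m=\inf\{s:|M_s|\vee\langle M,M\rangle_s\ge m\}$, prove the bound for $M^{T_m}$, and pass to the limit by Fatou, using that $\langle M,M\rangle_1^{1/2}$ is monotone under the stopping). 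Assuming first that $\langle M,M\rangle_t\le K$ a.s., this yields $\E[e^{\lambda M_t}]\le e^{\lambda^2K/2}$, hence by the Chernoff bound $\pr(|M_t|\ge u)\le 2e^{-u^2/(2K)}$, and integrating $\E[|M_t|^p]=\int_0^\infty pu^{p-1}\pr(|M_t|\ge u)\,du\le p\,\Gamma(p/2)(2K)^{p/2}$; since $p\,\Gamma(p/2)\le(2p)^{p/2}$ for $p\ge2$, this gives $\|M_t\|_p\le 2\sqrt{p}\,\sqrt K$, while for $p\in[1,2)$ one uses $\|M_t\|_p\le\|M_t\|_2=\|\langle M,M\rangle_t^{1/2}\|_2\le\sqrt K$.

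Next I would remove the boundedness of the quadratic variation and replace $\sqrt K$ by $\|\langle M,M\rangle_t^{1/2}\|_p$. For this, decompose $\Omega$ along the geometric levels $J_k=\{\beta^{k-1}<\langle M,M\rangle_t\le\beta^k\}$ with $\beta>1$: since $\langle M,M\rangle$ is continuous and nondecreasing, on $J_k$ the random variable $M_t$ equals the value of $M$ stopped at $\rho_k:=\inf\{s:\langle M,M\rangle_s\ge\beta^k\}$, whose quadratic variation is bounded by $\beta^k\le\beta\langle M,M\rangle_t$ on $J_k$; applying the bounded-bracket estimate on each piece (in conjunction with a good-$\lambda$ type argument to retain the indicator $1_{J_k}$) and summing the geometric series, then letting $\beta\downarrow1$, recovers $\|M_t\|_p\le 2\sqrt p\,\|\langle M,M\rangle_t^{1/2}\|_p$ for $p\ge2$; the case $1\le p<2$ follows from a standard domination argument (e.g.\ Lenglart's inequality). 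As an alternative route, one can reduce to Brownian motion from the start by the Dambis--Dubins--Schwarz theorem, writing $M_t=B_{\langle M,M\rangle_t}$, so that it suffices to establish $\|B_\tau\|_p\le 2\sqrt p\,\|\sqrt\tau\|_p$ for a Brownian motion $B$ and stopping times $\tau$, to which the same scheme applies.

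The main obstacle is not the overall strategy but keeping the universal constant equal to $2\sqrt p$ uniformly over $p\in[1,\infty)$ while the bracket is unbounded: the Chernoff optimization, the geometric summation, and the small-$p$ range each threaten to spoil the constant, so the bookkeeping must be done carefully (this is precisely what the limit $\beta\downarrow1$ and the good-$\lambda$ machinery are for). If this constant-chasing proves too cumbersome, one can instead simply invoke the known sharp-constant Burkholder--Davis--Gundy inequalities for continuous martingales available in the literature.
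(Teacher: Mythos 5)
The paper's proof of this lemma is a one-liner: it cites Theorem~A of \citet{CK1991}, which already gives the maximal inequality $\bigl\|\sup_{s\le t}|M_s|\bigr\|_p\le 2\sqrt p\,\|\langle M,M\rangle_t^{1/2}\|_p$ for bounded continuous martingales, and removes the boundedness by the routine localization you sketch in your first paragraph (stop at $T_m$, bound $\|M_{t\wedge T_m}\|_p\le 2\sqrt p\,\|\langle M,M\rangle_{t\wedge T_m}^{1/2}\|_p\le 2\sqrt p\,\|\langle M,M\rangle_t^{1/2}\|_p$ using monotonicity of the bracket, then let $m\to\infty$ by Fatou). Your final sentence, which proposes simply invoking the known sharp-constant BDG inequality, is therefore exactly the paper's route, and your localization remark completes it.

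Your primary, self-contained attempt has a genuine gap in the passage from bounded to unbounded bracket. The exponential-supermartingale step is fine and does give $\|M_t\|_p\le 2\sqrt p\,\sqrt K$ when $\langle M,M\rangle_t\le K$ a.s. But on the geometric level $J_k$, what the bounded-bracket estimate yields is $\E[|M_t|^p 1_{J_k}]\le\E[|M_{t\wedge\rho_k}|^p]\le(2\sqrt p)^p\beta^{kp/2}$, with no factor of $\pr(J_k)$ on the right; the geometric series $\sum_k\beta^{kp/2}$ therefore diverges, and nothing you have written recovers the missing $\pr(J_k)$. A good-$\lambda$ inequality in the style $\pr\bigl(M^*_t>\beta\lambda,\ \langle M,M\rangle_t^{1/2}\le\delta\lambda\bigr)\le 2e^{-(\beta-1)^2/(2\delta^2)}\pr(M^*_t>\lambda)$ does close the argument, but after optimizing the parameters it produces a constant of the form $c\sqrt p$ with $c$ strictly larger than $2$ (roughly $2.2\sqrt p$ or worse), and sending your dyadic ratio $\beta\downarrow1$ makes matters worse, not better, since that blows up the good-$\lambda$ constant. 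So the plan as stated cannot reach the stated constant $2\sqrt p$; obtaining it really does require the sharper argument of Carlen--Kree (or Barlow--Yor with their refined constant), which is what the paper quotes. The $p\in[1,2)$ case also needs care: Lenglart's domination gives $L^p$ control only for $p<1$, and the inequality $\|M_t\|_p\le\|M_t\|_2=\|\langle M,M\rangle_t^{1/2}\|_2$ does not by itself dominate $2\sqrt p\,\|\langle M,M\rangle_t^{1/2}\|_p$ when the bracket is non-deterministic.
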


\begin{proof}
This immediately follows from \cite[Theorem A]{CK1991} and a standard localization argument.  
\end{proof}

The second one is a Bernstein-type inequality for discrete-time martingales:
\begin{lemma}\label{bernstein}
Let $(\xi_i)_{i=1}^n$ be a martingale difference sequence with respect to a filtration $(\mcl{G}_i)_{i=0}^n$. 
Suppose that there are constants $B,K>0$ such that $\sum_{i=1}^n\expe{|\xi_i|^p\mid\mcl{G}_{i-1}}\leq p!K^{p-2}B^2/2$ a.s.~for any integer $p\geq 2$. Then, for any $u\geq0$,
\[
P\bra{\abs{\sum_{i=1}^n\xi_i}\geq u}\leq2\exp\bra{-\frac{u^2}{2(B^2+Ku)}}.
\]
\end{lemma}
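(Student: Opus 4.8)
The plan is to use the exponential (Chernoff) method adapted to martingales via a supermartingale argument, the point being that the hypothesis bounds the \emph{sum} $\sum_{i=1}^n\ex{|\xi_i|^p\mid\mcl G_{i-1}}$ rather than the individual conditional moments.

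First I would fix $\lambda\in(0,1/K)$ and expand $e^{\lambda\xi_i}$ as a power series. Taking the conditional expectation given $\mcl G_{i-1}$, the linear term vanishes because $(\xi_i)$ is a martingale difference sequence, and bounding $\ex{\xi_i^p\mid\mcl G_{i-1}}\leq\ex{|\xi_i|^p\mid\mcl G_{i-1}}$ gives
\[
\ex{e^{\lambda\xi_i}\mid\mcl G_{i-1}}\leq 1+V_i\leq e^{V_i},\qquad V_i:=\sum_{p\geq2}\frac{\lambda^p}{p!}\ex{|\xi_i|^p\mid\mcl G_{i-1}},
\]
where $V_i$ is $\mcl G_{i-1}$-measurable. (Conditional integrability of $e^{\lambda\xi_i}$ and the termwise interchange of expectation and summation follow from the case $p=2$ of the hypothesis, which forces $\ex{|\xi_i|^2\mid\mcl G_{i-1}}\leq B^2<\infty$, together with monotone/dominated convergence.) Hence $M_k:=\exp\bra{\lambda\sum_{i=1}^k\xi_i-\sum_{i=1}^k V_i}$ defines a nonnegative supermartingale with $M_0=1$, so $\E[M_n]\leq1$.

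Next I would bound $\sum_{i=1}^n V_i$ deterministically using the hypothesis: interchanging the two sums,
\[
\sum_{i=1}^n V_i=\sum_{p\geq2}\frac{\lambda^p}{p!}\sum_{i=1}^n\ex{|\xi_i|^p\mid\mcl G_{i-1}}\leq\frac{B^2}{2}\sum_{p\geq2}\lambda^pK^{p-2}=\frac{B^2\lambda^2}{2(1-\lambda K)}\qquad\text{a.s.}
\]
Writing $S_n:=\sum_{i=1}^n\xi_i$, we get $e^{\lambda S_n}=M_n\exp\bra{\sum_{i=1}^n V_i}\leq M_n\exp\bra{\frac{B^2\lambda^2}{2(1-\lambda K)}}$, so $\ex{e^{\lambda S_n}}\leq\exp\bra{\frac{B^2\lambda^2}{2(1-\lambda K)}}$, and Markov's inequality yields
\[
P(S_n\geq u)\leq\exp\bra{-\lambda u+\frac{B^2\lambda^2}{2(1-\lambda K)}}
\]
for every $u>0$. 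Taking $\lambda=u/(B^2+Ku)\in(0,1/K)$, for which $1-\lambda K=B^2/(B^2+Ku)$, the exponent simplifies to $-u^2/(2(B^2+Ku))$. Applying the same bound to the martingale difference sequence $(-\xi_i)$, which satisfies the identical hypothesis because $|-\xi_i|^p=|\xi_i|^p$, controls $P(S_n\leq-u)$; a union bound produces the factor $2$, and the case $u=0$ is trivial.

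The only real subtlety is precisely that the moment hypothesis is stated for the sum rather than term by term: a direct iteration of a bound $\ex{e^{\lambda\xi_i}\mid\mcl G_{i-1}}\leq C_i$ would require deterministic $C_i$, which are not available, so the supermartingale $M_k$ — which carries the random compensators $V_i$ along and discharges them only at the end through the almost sure bound on $\sum_i V_i$ — is the device that makes the argument work. Everything else is routine: checking the power-series manipulations, the geometric-series summation (legitimate since $\lambda K<1$), and the algebra of the optimization over $\lambda$.
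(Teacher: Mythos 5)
Your proof is correct, and it is the standard one: the paper simply defers to van de Geer's Lemma 8.9, whose proof is exactly this exponential-supermartingale (Chernoff) argument. Your handling of the key subtlety — that the hypothesis controls the sum $\sum_i V_i$ rather than each $V_i$, so one must carry the random compensators in the supermartingale $M_k$ and only discharge them at the end through the almost-sure bound on $\sum_i V_i$ — is precisely the point, and the remaining details (conditional integrability from the $p=2$ case, $\E[\xi_i^p\mid\mcl G_{i-1}]\le\E[|\xi_i|^p\mid\mcl G_{i-1}]$, the geometric summation for $\lambda K<1$, and optimizing at $\lambda=u/(B^2+Ku)$) are all in order.
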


\begin{proof}
See \cite[Lemma 8.9]{vdG00}.
\end{proof}

In addition, we use the Bernstein--Orlicz norm introduced in \cite{vdGL13} to simplify our argument. 
For each $L>0$, define the function $\Psi_L:[0,\infty)\to[0,\infty)$ by
\[
\Psi_L(u)=\exp\bra{\frac{\sqrt{1+2Lu}-1}{L}}^2-1,\qquad u\geq0.
\]
The \textit{$L$-Bernstein--Orlicz norm} of a random variable $\xi$ is then defined as
\[
\|\xi\|_{\Psi_L}:=\inf\cbra{c>0:\E\sbra{\Psi_L\bra{\frac{|\xi|}{c}}}\leq1}.
\]
The following result taken from \cite{We17} is useful.
\begin{lemma}\label{lem:bo}
Let $\xi$ be a random variable. Suppose that there are constants $B,K>0$ such that
\[
P(|\xi|>u)\leq2\exp\bra{-\frac{u^2}{2(B^2+Ku)}}
\]
for all $u>0$. Then $\|\xi\|_{\Psi_{\sqrt 3L}}\leq\sqrt 3\tau$ with $L=\sqrt{2}K/B$ and $\tau=\sqrt{8}B$. 
\end{lemma}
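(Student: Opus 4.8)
The plan is to unwind the definition of the Bernstein--Orlicz norm and reduce the statement to an elementary analytic inequality. By definition, to show $\|\xi\|_{\Psi_{\sqrt 3L}}\leq\sqrt 3\tau$ it suffices to verify that
\[
\E\sbra{\Psi_{\sqrt 3L}\bra{\frac{|\xi|}{\sqrt 3\tau}}}\leq1.
\]
Writing $\eta:=|\xi|/(\sqrt 3\tau)$, we have $\Psi_{\sqrt 3L}(\eta)+1=\exp\{(\sqrt{1+2\sqrt 3L\eta}-1)^2/(3L^2)\}$, so the goal becomes $\E[\exp\{(\sqrt{1+2\sqrt 3L\eta}-1)^2/(3L^2)\}]\leq 2$. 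First I would use the layer-cake (tail integration) formula: for a nonnegative random variable $X$ and increasing differentiable $\varphi$ with $\varphi(0)=0$, $\E[\varphi(X)]=\int_0^\infty \varphi'(u)P(X>u)\,du$. Applying this to $X=|\xi|$ and the appropriate composite function, and substituting the hypothesized tail bound $P(|\xi|>u)\leq 2\exp(-u^2/(2(B^2+Ku)))$, reduces everything to showing that a concrete one-dimensional integral is at most $1$ (equivalently that the exponential moment is at most $2$).

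The key computational step is to control the quadratic-over-linear exponent against the Bernstein--Orlicz exponent. With $L=\sqrt2 K/B$ and $\tau=\sqrt8 B$, a direct substitution should show that for all $u>0$,
\[
\frac{u^2}{2(B^2+Ku)}\ \geq\ \frac{\bigl(\sqrt{1+2\sqrt3 L\,u/(\sqrt3\tau)}-1\bigr)^2}{3L^2},
\]
after which monotone integration of the tail bound yields $\E[\Psi_{\sqrt3L}(|\xi|/(\sqrt3\tau))+1]\leq 2$, i.e. $\E[\Psi_{\sqrt3L}(|\xi|/(\sqrt3\tau))]\leq 1$, as required. The inequality displayed above is what makes the choice of the constants $\sqrt3$, $\sqrt8$ and $L=\sqrt2K/B$ work out; it is verified by clearing denominators and checking a polynomial inequality in $u$ (splitting into the regimes $Ku\lesssim B^2$ and $Ku\gtrsim B^2$ if a single algebraic manipulation is unwieldy). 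Since the statement is quoted from \cite{We17}, I would cite that reference for the detailed constant-chasing and only indicate the structure here.

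The main obstacle is purely the constant bookkeeping: one must check that the specific numerical factors ($\sqrt 3$ in the norm parameter and the scaling, $\sqrt 8$ in $\tau$, $\sqrt 2$ in $L$) are chosen so that the pointwise comparison of exponents holds for every $u>0$ and the resulting integral is bounded by the constant $2$ coming from the ``$2$'' in the tail bound. There is no conceptual difficulty — no martingale structure or probabilistic subtlety enters, only the inversion of the Bernstein-type tail into an Orlicz-norm estimate — so the proof is essentially a reference to \cite{We17} together with the above reduction.
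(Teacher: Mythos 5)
The paper's own proof is a one-line citation to Wellner's results, so the interesting question is whether your sketch of the underlying argument actually closes. It does not: the pointwise exponent comparison you assert is a factor of $3$ too weak, and that factor is exactly what the $\sqrt3$ in the parameter $\Psi_{\sqrt3L}$ is there to provide.

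Concretely, write $f(u)=\frac{u^2}{2(B^2+Ku)}$ and $g(u)=\frac{\bigl(\sqrt{1+2\sqrt3L\,u/(\sqrt3\tau)}-1\bigr)^2}{3L^2}$, so that $\Psi_{\sqrt3L}(u/(\sqrt3\tau))+1=e^{g(u)}$ and the hypothesis is $P(|\xi|>u)\le 2e^{-f(u)}$. The layer-cake identity gives
\[
\E\bigl[\Psi_{\sqrt3L}(|\xi|/(\sqrt3\tau))\bigr]
=\int_0^\infty g'(u)\,e^{g(u)}\,P(|\xi|>u)\,du
\le 2\int_0^\infty g'(u)\,e^{g(u)-f(u)}\,du.
\]
Your claimed inequality $f\ge g$ only yields $e^{g-f}\le 1$, and then the upper bound $2\int_0^\infty g'(u)\,du$ diverges, so ``monotone integration'' does not conclude. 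What is actually needed (and what does hold after substituting $s=Ku/B^2$ and reducing to $\frac{s^2}{1+s}\ge(\sqrt{1+s}-1)^2$) is the stronger inequality $f\ge 3g$. Then $e^{g-f}\le e^{-2g}$ and the integral telescopes: $2\int_0^\infty g'e^{-2g}\,du=\bigl[-e^{-2g}\bigr]_0^\infty=1$, using $g(0)=0$, which gives $\E[\Psi_{\sqrt3L}(|\xi|/(\sqrt3\tau))]\le 1$ as desired. Equivalently, the cleaner route is to set $\psi(w)=(\sqrt{1+2Lw}-1)^2/L^2$, note that $\Psi_{\sqrt3L}(u/(\sqrt3\tau))+1=\exp\{\psi(u/\tau)/3\}$, verify from the Bernstein tail bound that $P\bigl(\psi(|\xi|/\tau)\ge t\bigr)\le 2e^{-t}$, and then compute $\E[e^{\psi(|\xi|/\tau)/3}]\le 1+\int_0^\infty\frac{2}{3}e^{-2t/3}\,dt=2$; the extra factor $3$ in the exponent is precisely what makes the integral converge to a finite bound. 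Your writeup attributes the choice $\sqrt3$ in the Orlicz parameter to the inequality $f\ge g$, but in fact that inequality holds for a wider range of constants and does not explain the $\sqrt3$; the correct mechanism is the threefold gap $f\ge 3g$. Apart from this, the overall strategy (definition of the Orlicz norm, layer cake, tail bound, exponent comparison, cite \cite{We17}) is the right one.
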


\begin{proof}
This follows from Lemma 2.2 and Proposition 2.1 in \cite{We17}. 
\end{proof}

\begin{proof}[\bf Proof of Theorem \ref{cont-bound}]
We divide the proof into two steps. 

\noindent\textbf{Step 1}. Define two processes $D=(D_{t})_{t\in[0,1]}$ and $M=(M_{t})_{t\in[0,1]}$ as
\ba{
D_{t}=\int_0^tb_{s}ds,\quad\text{and}\quad
M_{t}=\int_0^t\sigma_{s}dW_s\qquad
\text{for }t\in[0,1].
}
We have
\ba{
\norm{{[Z,Z]}^n_1-[Z,Z]_1}
\leq\norm{{[D,D]}^n_1}+2\norm{{[D,M]}^n_1}+\norm{R},
}
where $R:={[M,M]}^n_1-[M,M]_1$. 
We have by \ref{ass:SH}
\ba{
\norm{{[D,D]}^n_1}
\leq\sum_{i=1}^n\norm{\Delta^n_iD(\Delta^n_iD)^\top}
=\sum_{i=1}^n|\Delta^n_iD|^2
\leq \Lambda\frac{d}{n}
}
and
\ba{
\norm{{[M,M]}^n_1}
\leq \|R\|+\norm{[M,M]_1}
\leq \|R\|+\Lambda.
}
Hence we obtain
\ba{
\norm{{[D,M]}^n_1}
&\leq\norm{\Delta^nD}\norm{\Delta^nM}
=\sqrt{\norm{{[D,D]}^n_1}\norm{{[M,M]}^n_1}}\\
&\leq\sqrt{\Lambda\frac{d}{n}\|R\|}
+\Lambda\sqrt{\frac{d}{n}}
\leq\frac{\Lambda}{2}\frac{d}{n}+\frac{\|R\|}{2}+\Lambda\sqrt{\frac{d}{n}}.
}
Consequently, it holds that
\ben{\label{eq:basic-est}
\norm{{[Z,Z]}^n_1-[Z,Z]_1}
\leq2\Lambda\frac{d}{n}+2\Lambda\sqrt{\frac{d}{n}}
+2\norm{R}.
}
In the next step, we will show that there is a universal constant $C>0$ such that, for any $u>0$,
\ben{\label{eq:D-est}
\norm{R}
\leq C\Lambda\bra{\frac{d+u}{n}+\sqrt{\frac{d+u}{n}}}
}
with probability at least $1-2e^{-u}$. Combining this with \eqref{eq:basic-est} gives the desired result.

\noindent\textbf{Step 2}. 
We prove \eqref{eq:D-est} by the so-called $\eps$-net argument. 
First, by Lemmas 5.2 and 5.4 in \cite{Ve12} with $\eps=1/4$, there is a set $\mathcal N\subset\{x\in\mathbb R^d:|x|=1\}$ such that $\#\mathcal N\leq9^{d}$ and
\ben{\label{eq:net-arg}
\norm{A}\leq2\max_{x\in\mathcal N}|x^\top Ax|
}
for any symmetric $d\times d$ matrix $A$. Here, $\#\mcl N$ denotes the cardinality of $\mcl N$. 
Next, fix $x\in\mcl N$ and define the process $M^x=(M_{t}^x)_{t\in[0,1]}$ by $M^x_{t}=x^\top M_{t}$, $t\in[0,1]$. For every $i=1,\dots,n$, set
\[
\xi_i:=x^\top\cbra{(\Delta_i^nM)(\Delta_i^nM)^\top-\Delta^n_i[M,M]} x
=(\Delta_i^nM^x)^2-\Delta^n_i[M^x,M^x].
\]
Then we have
\ba{
x^\top R x=\sum_{i=1}^n\xi_i.
}
We are going to apply Lemma \ref{bernstein}. First, by construction, $(\xi_{i})_{i=1}^n$ is a martingale difference with respect to $(\mathcal{F}_{t_i})_{i=0}^n$. 
Next, for any integer $p\geq2$ and $i=1,\dots,n$, we have 
\ba{
\bra{\E[|\xi_i|^p\mid \mcl F_{t_{i-1}}]}^{1/p}
&\leq\bra{\E[(\Delta_i^nM^x)^{2p}\mid \mcl F_{t_{i-1}}]}^{1/p}
+\bra{\E\sbra{\abs{\Delta^n_i[M^x,M^x]}^p\mid \mcl F_{t_{i-1}}}}^{1/p}\\
&\leq\{(2\sqrt{2p})^2+1\}\bra{\E[(\Delta_i^n[M^x,M^x])^{p}\mid \mcl F_{t_{i-1}}]}^{1/p}\\
&\leq9p\Lambda/n,
}
where the second line follows from Lemma \ref{sharp-BDG} and the last line from \ref{ass:SH}. 
Thus, using Stirling's formula, we obtain
\[
\sum_{i=1}^n\E\sbra{|\xi_{i}|^p\mid\mcl{F}_{t_{i-1}}}\leq \bra{\frac{9\Lambda}{n}}^{p}\frac{e^p}{\sqrt{2\pi p}}p!
\leq\frac{p!}{2}\bra{\frac{K_0\Lambda}{n}}^{p-2}\bra{\frac{B_0\Lambda}{\sqrt n}}^2,
\]
where $B_0=9e(2/\pi)^{1/4}$ and $K_0=9e$. 
Consequently, we obtain by Lemma \ref{bernstein}
\ben{\label{eq:bernstein}
P(|x^\top R x|\geq u)\leq2\exp\lpa-\frac{u^2}{2(B_0^2\Lambda^2/n+(K_0\Lambda/n) u)}\rpa
}
for all $u\geq0$. This and Lemma \ref{lem:bo} imply that 
\ben{\label{eq:bo-bound}
\norm{x^\top R x}_{\Psi_{L_0/\sqrt n}}\leq \tau_0\frac{\Lambda}{\sqrt n}\qquad\text{with }L_0=\sqrt{6}K_0/B_0\quad\text{and}\quad \tau_0=\sqrt{24}B_0. 
}

Finally, combining \eqref{eq:net-arg} and \eqref{eq:bo-bound} with Lemma 4 in \cite{vdGL13}, we obtain
\ba{
P\bra{\norm{R}\geq \frac{\tau_0}{2}\frac{\Lambda}{\sqrt n}\sbra{\sqrt{\log(1+\#\mcl N)}+\frac{L_0}{2\sqrt n}\log(1+\#\mcl N)+\sqrt u+\frac{L_0u}{2\sqrt n}}}\leq 2e^{-u}
}
for all $u>0$. Since $\log(1+\#\mcl N)\leq\log2+d\log 9\leq4d$, this completes the proof of \eqref{eq:D-est}. 
\end{proof}

\subsubsection{Proof of Theorem \ref{jump-bound2}}

We begin by a simple lemma that allows us to impose extra conditions on $\delta$. 
\begin{lemma}\label{lemma:bdd-jump}
Under \ref{ass:SH}, there is a predictable function $\tilde\delta:\Omega\times[0,1]\times E\to\mathbb R^d$ satisfying the following conditions:
\begin{enumerate}[label=(\roman*)]

\item $\kappa(\delta)\star(\mu-\nu)=\kappa(\tilde\delta)\star(\mu-\nu)$ and $\kappa'(\delta)\star\mu=\kappa'(\tilde\delta)\star\mu$. 

\item $|\tilde\delta(\omega,s,z)|^2\leq\Lambda$ and $\tilde\delta_j(\omega,s,z)^2\leq\min\{\delta_j(\omega,s,z)^2,\Lambda\gamma_j(z)^2\}$ for all $(\omega,s,z)\in\Omega\times[0,1]\times E$ and $j=1,\dots,d$. Here, $\tilde\delta_j$ denotes the $j$-th component function of $\tilde\delta$. 

\end{enumerate}
\end{lemma}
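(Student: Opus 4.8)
The plan is to take $\wt\delta$ to be the truncation of $\delta$ at level $\sqrt\Lambda$. Concretely, set $\wt\delta(\omega,s,z):=\delta(\omega,s,z)1_{\{|\delta(\omega,s,z)|^2\le\Lambda\}}$ and let $A:=\{(\omega,s,z)\in\Omega\times[0,1]\times E:|\delta(\omega,s,z)|^2>\Lambda\}$, so that $\wt\delta=\delta$ on $A^c$ and $\wt\delta=0$ on $A$. Since $\delta$ is predictable, $A$ belongs to the predictable $\sigma$-field on $\Omega\times[0,1]\times E$ and $\wt\delta$ is predictable. Property (ii) is then immediate: $|\wt\delta|^2\le\Lambda$ by construction, $\wt\delta_j^2\le\delta_j^2$ trivially, and $\wt\delta_j^2\le\Lambda\gamma_j(z)^2$ because on $\{|\delta_j|\le1\}$ one has $\delta_j^2\le(|\delta_j|\wedge1)^2\le\gamma_j(z)^2\le\Lambda\gamma_j(z)^2$ by \ref{ass:SH}, while on $\{|\delta_j|>1\}$ one has $\gamma_j(z)\ge|\delta_j|\wedge1=1$, hence $\Lambda\gamma_j(z)^2\ge\Lambda\ge|\wt\delta|^2\ge\wt\delta_j^2$.

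For property (i) it suffices to show that $A$ is a.s.\ null for both the Poisson random measure $\mu$ and its intensity $\nu$; indeed, $\kappa(\delta)$ and $\kappa(\wt\delta)$ (resp.\ $\kappa'(\delta)$ and $\kappa'(\wt\delta)$) then coincide outside a set charged by neither $\mu$ nor $\nu$, so the associated (compensated) integrals are indistinguishable. First I would prove $\mu(A)=0$ a.s.\ as follows. Because $\nu(dt,dz)=dt\otimes\levy(dz)$ has no atom in the time variable and because a.s.\ no two atoms of $\mu$ share the same time coordinate (split $E$ into countably many pieces of finite $\levy$-measure), every jump of $Z$ arises from a single atom of $\mu$ via the two jump integrals in \eqref{eq:ito}: at an atom $(s,z_s)$ of $\mu$ one gets $\Delta Z_s=\kappa(\delta(s,z_s))+\kappa'(\delta(s,z_s))=\delta(s,z_s)$. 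On the other hand, $[Z,Z]_1\succeq(\Delta Z_s)(\Delta Z_s)^\top$ for every $s$, so Weyl's inequality as in \eqref{eq:jump-bound} gives $|\Delta Z_s|^2\le\norm{[Z,Z]_1}\le\Lambda$ under \ref{ass:SH}. Hence no atom of $\mu$ lies in $A$, i.e.\ $\mu(A)=0$ a.s.

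Next I would upgrade this to $\nu(A)=0$ a.s.\ using predictability of $A$ and the fact that $\mu-\nu$ is a compensated measure: writing $E=\bigcup_mE_m$ with $\levy(E_m)<\infty$ and $A_m:=A\cap([0,1]\times E_m)$, the variable $(1_{A_m}\star\nu)_1\le\levy(E_m)$ is integrable, so $\E[(1_{A_m}\star\nu)_1]=\E[(1_{A_m}\star\mu)_1]\le\E[\mu(A)]=0$ and therefore $(1_{A_m}\star\nu)_1=0$ a.s.; letting $m\to\infty$ and invoking monotone convergence gives $\nu(A)=(1_A\star\nu)_1=0$ a.s. Finally, since $\kappa(\delta)\star(\mu-\nu)$ depends on $\delta$ only through its equivalence class modulo the $\mu$-null and $\nu$-null sets (and $\kappa'(\delta)\star\mu$ only modulo the $\mu$-null ones), and since $\kappa(\delta)=\kappa(\wt\delta)$ and $\kappa'(\delta)=\kappa'(\wt\delta)$ on $A^c$ with $\mu(A)=\nu(A)=0$ a.s., property (i) follows.

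The step I expect to require the most care is the one in the second paragraph: one must justify rigorously that each jump of $Z$ equals $\delta$ evaluated at a \emph{single} atom of $\mu$ --- which is where the absence of simultaneous atoms of $\mu$ and the time-regularity of $\nu$ enter --- and then pass from ``$\mu$ does not charge $A$'' to ``$\nu$ does not charge $A$''. Everything else is routine measure-theoretic and stochastic-calculus bookkeeping.
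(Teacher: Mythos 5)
Your proof is correct and follows essentially the same route as the paper: truncate $\delta$ to $\wt\delta=\delta 1_{\{|\delta|^2\le\Lambda\}}$, verify~(ii) directly, and derive~(i) from the observation that the bound $\sup_s|\Delta Z_s|^2\le\Lambda$ (coming from \eqref{eq:jump-bound} and \ref{ass:SH}) forces $\delta$ and $\wt\delta$ to agree at every atom of $\mu$. The paper expresses this last point via the representation $\mu=\sum_s 1_{\mathcal T}(s)\,\eps_{(s,\theta_s)}$ from Proposition~1.14 in \cite[Chapter~II]{JS}, noting $\Delta Z_s=\delta(s,\theta_s)1_{\mathcal T}(s)=\wt\delta(s,\theta_s)1_{\mathcal T}(s)$, and then asserts that~(i) ``follows from the definition of the integrals.'' You instead phrase it as $\mu(A)=0$ a.s.\ with $A=\{|\delta|^2>\Lambda\}$, and you add the explicit upgrade to $\nu(A)=0$ a.s.\ (using predictability of $A$ and the compensator identity $\E[(1_{A_m}\star\nu)_1]=\E[(1_{A_m}\star\mu)_1]$), which then lets you invoke that the stochastic integral depends only on the integrand modulo $M^P_\mu$-null sets. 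This extra step is a slightly more measure-theoretic way to justify what the paper leaves implicit (that for the compensated integral one needs the integrands to agree $M^P_\mu$-a.e., not merely $\mu(\omega,\cdot)$-a.e.\ pathwise); either argument is fine, and yours is, if anything, a bit more self-contained on that point.
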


\begin{proof}
Define $\tilde\delta:=\delta1_{\{|\delta|^2\leq\Lambda\}}$. By construction we have $|\tilde\delta|^2\leq\Lambda$. In particular, $\tilde\delta_j^2\leq\Lambda$ for every $j=1,\dots,d$. Now, if $(\omega,s,z)\in\Omega\times[0,1]\times E$ satisfies $\delta_j(\omega,s,z)^2>1$, then $\gamma_j(z)^2\geq1$, so $\tilde\delta_j(\omega,s,z)^2\leq\Lambda\gamma_j(z)^2$. Otherwise, $\tilde\delta_j(\omega,s,z)^2=\delta_j(\omega,s,z)^2\wedge1\leq\gamma_j(z)^2\leq\Lambda\gamma_j(z)^2$. 
Moreover, we evidently have $|\tilde\delta_j|\leq|\delta_j|$. Hence $\tilde\delta$ satisfies condition (ii). 

It remains to check condition (i). By Proposition 1.14 in \cite[Chapter II]{JS}, there are a thin random set $\mcl T$ and an $E$-valued optional process $\theta=(\theta_s)_{s\in[0,1]}$ such that
\ben{\label{eq:prm}
\mu=\sum_{s\in[0,1]}1_{\mcl T}(s)\eps_{(s,\theta_s)},
}
where $\eps_a$ denotes the Dirac measure at point $a$. Then we have $\Delta Z_s=\delta(s,\theta_s)1_{\mcl T}(s)$ for all $s$ by the definition of (stochastic or ordinary) integrals with respect to $\mu$. Meanwhile, we have $\sup_{s\in[0,1]}|\Delta Z_s|^2\leq\Lambda$ by \eqref{eq:jump-bound} and \ref{ass:SH}. Consequently, $\delta(s,\theta_s)1_{\mcl T}(s)=\tilde\delta(s,\theta_s)1_{\mcl T}(s)$ for all $s\in[0,1]$. Now condition (i) follows from the definition of (stochastic or ordinary) integrals with respect to $\mu$. 
\end{proof}

Thanks to Lemma \ref{lemma:bdd-jump}, when assuming \ref{ass:SH}, we may also assume that
\ben{\label{eq:bdd-jump}
|\delta(\omega,s,z)|^2\leq\Lambda\qquad\text{and}\qquad
\delta_j(\omega,s,z)^2\leq\Lambda\gamma_j(z)^2
}
for all $(\omega,s,z)\in\Omega\times[0,1]\times E$ and $j=1,\dots,d$.

Before proceeding to the formal proof, we give a brief description of our proof strategy.  
First, under \eqref{eq:bdd-jump}, we can define the process $J=(J_t)_{t\in[0,1]}$ by
\ben{\label{def:J}
J_t=\delta\star(\mu-\nu)_t,\qquad t\in[0,1].
}
It is not difficult to see that the major problem is to bound ${[J,J]}^n_1$; see Step 1 in the proof of Theorem \ref{jump-bound2} for the formal argument. 

To bound ${[J,J]}^n_1$, we follow a similar strategy to \citet{Ru99}. 
For $p=\log (2/\alpha)$, we will derive the following bound corresponding to Eq.(2.1) in \cite{Ru99}:
\besn{\label{eq:key}
&\norm{{[J,J]}^n_1-[J,J]_1}_{L^p(\sch^\infty)}\\
&\leq C\log(d/\alpha)\bra{\norm{\sqrt{\max_{1\leq i\leq n}|\Delta^n_iJ|^2\norm{{[J,J]}^n_1}}}_p
+\norm{\sqrt{\max_{1\leq i\leq n}\norm{\Delta^n_i[J,J]}\norm{[J,J]_1}}}_p},
}
where $C$ is a universal constant. Since
\ba{
&C\log(d/\alpha)\norm{\sqrt{\max_{1\leq i\leq n}|\Delta^n_iJ|^2\norm{{[J,J]}^n_1}}}_p\\
&\leq C\log(d/\alpha)\norm{\sqrt{\max_{1\leq i\leq n}|\Delta^n_iJ|^2\norm{{[J,J]}^n_1-[J,J]_1}}}_p
+C\log(d/\alpha)\norm{\sqrt{\max_{1\leq i\leq n}|\Delta^n_iJ|^2\norm{[J,J]_1}}}_p\\
&\leq \frac{C^2\log^2(d/\alpha)}{2}\norm{\max_{1\leq i\leq n}|\Delta^n_iJ|^2}_p+\frac{1}{2}\norm{{[J,J]}^n_1-[J,J]_1}_{L^p(\sch^\infty)}
+C\log(d/\alpha)\norm{\sqrt{\max_{1\leq i\leq n}|\Delta^n_iJ|^2\norm{[J,J]_1}}}_p,
}
we obtain 
\bm{
\norm{{[J,J]}^n_1-[J,J]_1}_{L^p(\sch^\infty)}
\leq C^2\log^2(d/\alpha)\norm{\max_{1\leq i\leq n}|\Delta^n_iJ|^2}_p\\
+2C\log(d/\alpha)\bra{\norm{\sqrt{\max_{1\leq i\leq n}|\Delta^n_iJ|^2\norm{[J,J]_1}}}_p
+\norm{\sqrt{\max_{1\leq i\leq n}\norm{\Delta^n_i[J,J]}\norm{[J,J]_1}}}_p}.
}
In the setting of \cite{Ru99}, quantities corresponding to $\max_{i}|\Delta^n_iJ|^2$ and $\max_i\norm{\Delta^n_i[J,J]}$ become smaller as $n\to\infty$, so this gives a concentration inequality for ${[J,J]}^n_1$. 
By contrast, these quantities cannot be small in our setting because $J$ is discontinuous, so this does not give a vanishing error bound for ${[J,J]}^n_1-[J,J]_1$. However, we can still show that they are of order $O_p(\Lambda(1+d/n))$ up to a log factor, so we can deduce an adequate bound for $\|{[J,J]}^n_1\|$ from this argument and Markov's inequality. 


A key technical step in the above argument is the derivation of \eqref{eq:key}. This follows from the following bound along with a few elementary matrix inequalities:
\ba{
\norm{{[J,J]}^n_1-[J,J]_1}_{L^p(\sch^\infty)}
\lesssim \log(d/\alpha)\norm{\bra{\sum_{i=1}^n\cbra{(\Delta^n_iJ)(\Delta^n_iJ)^\top-\Delta^n_i[J,J]}^2}^{1/2}}_{L^p(\sch^\infty)}.
}
In \cite{Ru99}, the corresponding bound follows from a standard symmetrization argument and the non-commutative Khinchine inequalities. In our setting, the former is not applicable since ${[J,J]}^n_1-[J,J]_1$ is not a sum of independent matrices but a matrix martingale, so some modification is necessary. It turns out that a symmetrization inequality is still available in the current situation with an extra log factor; see Remark \ref{rem:UMD}. 
With an additional effort, we can even avoid the extra log factor. This is the content of the following theorem:
\begin{theorem}[Burkholder--Gundy type inequalities for a matrix martingale]\label{matrix-burkholder}
Let $\mcl Y=(Y_i)_{i=1}^n$ be an $\mathbb R^{d\times d}$-valued martingale difference sequence with respect to a filtration $(\mcl G_i)_{i=0}^n$. Then, for any $p\in(1,\infty)$,
\ben{\label{eq:BG}
\norm{\sum_{i=1}^nY_i}_{L^p(\sch^\infty)}
\leq Cp\bra{\frac{\log d}{p-1}+1}
\left\{\norm{\bra{\sum_{i=1}^nY_i^\top Y_i}^{1/2}}_{L^p(\sch^\infty)}\vee
\norm{\bra{\sum_{i=1}^nY_i Y_i^\top}^{1/2}}_{L^p(\sch^\infty)}\right\},
}
where $C>0$ is a universal constant. 
\end{theorem}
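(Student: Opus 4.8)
The plan is to obtain \eqref{eq:BG} from the non-commutative Burkholder--Gundy inequalities for martingales, using a Schatten norm at an exponent tuned to $\log d$ as a bridge to the operator norm. Set $\mcl N:=L^\infty(\Omega)\otimes M_d$ (von Neumann algebra tensor product) with the trace $\tau:=\E\otimes\trace$, so that $\|X\|_{L^q(\mcl N)}=(\E\|X\|_{S^q}^q)^{1/q}$ for a random matrix $X$, where $\|\cdot\|_{S^q}$ is the Schatten-$q$ norm; then $X_k:=\sum_{i\le k}Y_i$ is a non-commutative martingale with respect to $\mcl N_i:=L^\infty(\Omega,\mcl G_i)\otimes M_d$. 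The only geometric inputs I need are the pointwise bounds $\|X\|\le\|X\|_{S^q}\le d^{1/q}\|X\|$, which give
\[
\|X\|_{L^q(\sch^\infty)}\le\|X\|_{L^q(\mcl N)}\le d^{1/q}\|X\|_{L^q(\sch^\infty)},\qquad\text{with }d^{1/q}\le e\text{ whenever }q\ge\log d,
\]
and the self-adjoint dilation replacing $Y_i$ by the Hermitian $2d\times 2d$ matrix with block entries $0,Y_i,Y_i^\top,0$, which reduces the problem to a self-adjoint martingale whose single square function $(\sum_iY_i^2)^{1/2}$ has operator norm equal to the maximum on the right of \eqref{eq:BG}. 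From here on I assume $Y_i=Y_i^\top$, and (the case of bounded $d$ being handled directly by the same comparison at exponent $p$) $\log d\ge2$.

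Step 1 is the special case $p\ge\log d$. Then $p\ge2$, and the non-commutative Burkholder--Gundy inequality of Pisier--Xu, applied in $L^p(\mcl N)$ to $(X_k)$, gives $\|X_n\|_{L^p(\mcl N)}\lesssim p\,\|(\sum_iY_i^2)^{1/2}\|_{L^p(\mcl N)}$. Sandwiching both sides with the comparison above at $q=p\ (\ge\log d)$ --- losslessly on the left, at the cost of $d^{1/p}\le e$ on the right --- yields \eqref{eq:BG} with constant $\lesssim p\le p(\tfrac{\log d}{p-1}+1)$. Here the theorem is little more than a reformulation of non-commutative Burkholder--Gundy.

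Step 2 is the range $1<p<\log d$. The obvious attempt --- raising the Schatten exponent to $\lceil\log d\rceil$ and invoking Step 1 there --- breaks down, because it would measure the square function on the right in $L^{\log d}(\Omega)$ rather than $L^p(\Omega)$, and these differ. Reconciling the requirement ``Schatten exponent $\gtrsim\log d$'' (for a dimension-free comparison with the operator norm) with the requirement ``square function controlled in $L^p(\Omega)$'' is the crux. My plan is to localize: for $\lambda>0$ put $\sigma_\lambda:=\inf\{k:\|\sum_{i\le k}Y_i^2\|>\lambda^2\}$; on $\{\sigma_\lambda>n\}$ the stopped martingale $X_{\cdot\wedge\sigma_\lambda}$ has quadratic variation $\le\lambda^2$, and the overshoot at the jump time $\sigma_\lambda$ is at most $\max_i\|Y_i\|^2\le\|\sum_iY_i^2\|$, dominated by the square function and hence costing only a bounded factor. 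For a martingale with a.s.\ bounded quadratic variation one derives --- applying Step 1 at the safe exponent $q\asymp\log d$ together with Chebyshev's inequality, or equivalently a matrix Azuma/Freedman-type estimate --- a tail bound for $\max_k\|X_{k\wedge\sigma_\lambda}\|$ carrying the dimensional prefactor $d$. Integrating this by layers against $\mathrm d(t^p)$, and using $P(\sigma_\lambda\le n)\le P(\|(\sum_iY_i^2)^{1/2}\|>\lambda)$ from Markov's inequality, transfers the estimate back to $(X_k)$ in $L^p$: the dimensional prefactor produces the factor $\log d$ and the good-$\lambda$ bookkeeping the conjugate-exponent factor $\tfrac{p}{p-1}$, giving the constant $p(\tfrac{\log d}{p-1}+1)$. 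For $1<p<2$ a cleaner alternative is duality: the dual of $L^p(\sch^\infty)$ is $L^{p'}(\Omega;S^1)$, and expanding the dual element along the filtration $(\mcl G_i)$ reduces the bound to the $p'>2$ case of \eqref{eq:BG} just established, the factor $p'$ appearing automatically.

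The step I expect to be hardest is the localization in Step 2: since the square functions $\sum_{i\le k}Y_i^2$ are adapted but not predictable, stopping at a level of the square function overshoots, and one must both control that overshoot and keep the good-$\lambda$ constants uniform in $p\in(1,\infty)$. Everything else is routine --- the quoted non-commutative martingale inequalities, the elementary operator/Schatten-norm comparison, and standard matrix algebra.
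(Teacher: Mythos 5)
Your Step~1 is fine and matches what the paper itself observes in Remark~\ref{rem:BG}: for $p\geq 2\vee\log d$ the statement drops out of the non-commutative Burkholder--Gundy inequalities plus the sandwich $\|X\|\le\|X\|_{\sch^q}\le d^{1/q}\|X\|$ at $q=p$. But the real content of Theorem~\ref{matrix-burkholder} is the range $1<p<\log d$, and your Step~2 has genuine gaps. The good-$\lambda$ route has a problem precisely at the place you flag as hard: when you stop at $\sigma_\lambda=\inf\{k:\|\sum_{i\le k}Y_i^2\|>\lambda^2\}$, the quadratic variation of the stopped martingale $X_{\cdot\wedge\sigma_\lambda}$ is $\sum_{i\le\sigma_\lambda}Y_i^2$, whose operator norm is only bounded by $\lambda^2+\|Y_{\sigma_\lambda}\|^2$, and $\|Y_{\sigma_\lambda}\|$ has \emph{no} a.s.\ bound -- your claim that the overshoot is ``dominated by the square function'' uses an event that is not $\mcl G_{\sigma_\lambda}$-measurable, so you cannot apply a conditional tail bound on it. Likewise, matrix Freedman controls the \emph{predictable} quadratic variation $\sum_i\E[Y_i^2\mid\mcl G_{i-1}]$, not the optional one appearing in \eqref{eq:BG}; bridging these is itself a Burkholder--Davis-type step that is not free in the matrix setting. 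And the promised constant $p\bigl(\frac{\log d}{p-1}+1\bigr)$ does not naturally emerge from a good-$\lambda$ integration -- the $\frac{p}{p-1}$ factor comes from a different mechanism. Finally, the duality alternative you sketch for $1<p<2$ is misidentified: after writing $Z_i=\E[Z\mid\mcl G_i]-\E[Z\mid\mcl G_{i-1}]$ and applying Cauchy--Schwarz, what must be bounded is $\|\mcl Z\|_{L^{p'}(\mcl H^1)}$ by $\|Z\|_{L^{p'}(\sch^1)}$; this is a non-commutative Khintchine-type inequality at the $\sch^1$ end (square function dominated by the martingale), \emph{not} the $p'>2$ case of \eqref{eq:BG}, which lives at $\sch^\infty$ with the inequality in the opposite direction.

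The paper's actual proof runs a single duality argument for all $p\in(1,\infty)$: Lemma~\ref{lem:duality} reduces the task to bounding $\bigl|\E[\trace(\sum_iY_iZ)]\bigr|$ over $\|Z\|_{L^q(\sch^1)}\le1$ with $1/p+1/q=1$; the martingale decomposition of $Z$ plus the Cauchy--Schwarz estimate of Corollary~\ref{coro:schwarz} reduce it to bounding $\|\mcl Z\|_{L^q(\mcl H^1)}$; this is handled by the $L^1$ non-commutative Khintchine inequality of Lemma~\ref{lem:khintchine} combined with the UMD bound of Corollary~\ref{coro:UMD}, with the Schatten exponent $r=(\log d)/(\log d-1)$ chosen so the comparison $\|A\|_{\sch^r}\le\|A\|_{\sch^1}\le e\|A\|_{\sch^r}$ is dimension-free. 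That is where both the $\log d$ and the $\frac{p}{p-1}$ come from, cleanly and uniformly in $p$. If you want to salvage your Step~2, the duality route is the one to pursue, but the target dual estimate is a Khintchine inequality for row/column square functions in $\sch^1$, not a re-application of \eqref{eq:BG}.
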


We give a proof of this theorem in Appendix \ref{sec:mBDG}. 
\begin{rmk}[Relation to the non-commutative Burkholder--Gundy inequalities]\label{rem:BG}
When $p\geq 2\vee\log d$, Theorem \ref{matrix-burkholder} can be derived from the so-called non-commutative Burkholder--Gundy inequalities. 
To see this, in this remark we will freely use the standard terminology of non-commutative probability theory; see \cite[Chapter 14]{Pi16} for details. 

Let $\mcl M=L^\infty(\Omega,\mcl F,P;\mathbb C^{d\times d})$, i.e.~the space of $d\times d$ complex random matrices on $(\Omega,\mcl F,P)$ whose entries are essentially bounded. Then, identifying $Y\in\mcl M$ with the operator $\xi\mapsto Y\xi$ on the complex Hilbert space $L^2(\Omega,\mcl F,P;\mathbb C^{d})$, we can regard $\mcl M$ as a von Neumann algebra. 
We can easily check that the map $\tau:\mcl M\to\mathbb C$ defined as $\tau(Y)=\E[\trace Y]$ is a normal faithful finite trace on $\mcl M$. 
Then, the norm of the non-commutative $L^p$ space $L^p(\mcl M,\tau)$ associated with $(\mcl M,\tau)$ is given by $\|Y\|_{L^p(\mcl M,\tau)}=(\E[\|Y\|_{\sch^p}^p])^{1/p}$ for $Y\in\mcl M$, where $\|\cdot\|_{\sch^p}$ denotes the Schatten $p$-norm (cf.~Appendix \ref{sec:mBDG}).  
Now, in the setting of Theorem \ref{matrix-burkholder}, $\mcl Y$ is a non-commutative martingale difference sequence with respect to the filtration $(L^\infty(\Omega,\mcl G_i,P;\mathbb C^{d\times d}))_{i=0}^n$. 
Therefore, by the non-commutative Burkholder--Gundy inequalities (e.g.~Theorem 5.1 in \cite{Ra02}), 
\ba{
\left(\E\norm{\sum_{i=1}^nY_i}^q_{\sch^q}\right)^{1/q}
&\leq C_q
\left\{\left(\E\norm{\lpa\sum_{i=1}^nY_i^\top Y_i\rpa^{1/2}}^q_{\sch^q}\right)^{1/q}\vee
\left(\E\norm{\lpa\sum_{i=1}^nY_i Y_i^\top\rpa^{1/2}}^q_{\sch^q}\right)^{1/q}\right\}
}
for any $q\geq2$, where $C_q>0$ is a constant depending only on $q$. It is known that $C_q\lesssim q$ (see Remark 5.4 in \cite{Ra02}). Since $\|A\|\leq\|A\|_{\sch^q}\leq d^{1/q}\|A\|$ for any $A\in\mathbb R^{d\times d}$ and $q\geq1$, the claim of the theorem follows from the above inequality with $q=p$ as long as $p\geq2\vee\log d$. 

Since we can get a slightly sharper bound in our main results by applying Theorem \ref{matrix-burkholder} with $p$ independent of $d$, we prove the version stated above later. 
Although our proof is essentially a rephrasing of the proof of \cite[Theorem 5.1]{Ra02}, it is written without explicit use of language from non-commutative probability theory and may be of independent interest. 

Finally, we refer to the work by \cite{MJCFT14} that establishes Burkholder--Gundy type inequalities for a random matrix via the method of exchangeable pairs (see Theorem 7.1 ibidem). However, this result seems inapplicable to matrix martingales. 
\end{rmk}

%



It remains to bound $\max_{i}|\Delta^n_iJ|^2$ and $\max_i\norm{\Delta^n_i[J,J]}$. Under \ref{ass:SH}, the latter can be bounded by $\Lambda$ because
\ben{\label{eq:jqv-bound}
\max_{1\leq i\leq n}\norm{\Delta^n_i[J,J]}=\max_{1\leq i\leq n}\norm{\sum_{s\in(t_{i-1},t_i]}(\Delta J_s)(\Delta J_s)^\top}\leq\norm{[J,J]_1}\leq\norm{[Z,Z]_1}
}
by Weyl's inequality. To bound the former, we will use the following Rosenthal type inequalities for multi-dimensional locally square integrable martingales with dimension-free constants. 
Given a locally square integrable martingale $M$ in $\mathbb R^d$, we write $\langle M\rangle=\sum_{j=1}^d\langle M^j,M^j\rangle$. 
%
\begin{theorem}\label{thm:rosenthal}
Let $M=(M_t)_{t\in[0,1]}$ be a local martingale in $\mathbb R^d$ with bounded jumps and $M_0=0$ (hence it is locally square integrable). Suppose that $\langle M\rangle$ is continuous. 
Then, there is a universal constant $C>0$ such that
\ben{\label{eq:rosenthal}
\norm{|M_t|}_p\leq C\bra{\sqrt p\norm{\langle M\rangle_t^{1/2}}_p+p\norm{\sup_{s\in(0,t]}|\Delta M_s|}_p}
}
for any $p\in[2,\infty)$ and $t\in(0,1]$. 
\end{theorem}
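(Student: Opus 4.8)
The plan is to avoid any direct expansion of $|M_t|^p$ and instead run a self-improving (bootstrap) argument built on the scalar Burkholder--Davis--Gundy inequality with its sharp order in $p$, after a routine localization. For the latter I would stop $M$ at $T_m:=\inf\{t\in[0,1]:|M_t|\vee\langle M\rangle_t\vee[M]_t\ge m\}$, where $[M]_t:=\sum_{j=1}^d[M^j,M^j]_t$. Since $M$ has bounded jumps and $\langle M\rangle_1,[M]_1<\infty$ a.s., the stopped process $M^{T_m}$ is bounded, hence a true martingale lying in every $L^p$, and $T_m\uparrow1$ a.s.; proving the inequality for each $M^{T_m}$ and letting $m\to\infty$ --- Fatou's lemma on the left-hand side, monotone convergence on the right-hand side, using $\langle M^{T_m}\rangle_t\uparrow\langle M\rangle_t$, $[M^{T_m}]_t\uparrow[M]_t$ and $\sup_{s\le t}|\Delta M^{T_m}_s|\uparrow\sup_{s\le t}|\Delta M_s|$ --- recovers the general statement. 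So I may assume $M$ bounded.

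Next I would introduce two auxiliary real martingales. By It\^o's formula $|M_t|^2=N_t+[M]_t$ with $N_t:=2\int_0^tM_{s-}\cdot dM_s$; since the increments of the matrix $([M^j,M^k])_{1\le j,k\le d}$ are positive semidefinite with trace $d[M]$, one gets $[N]_t=4\sum_{j,k}\int_0^tM^j_{s-}M^k_{s-}\,d[M^j,M^k]_s\le 4(M^*_t)^2[M]_t$, where $M^*_t:=\sup_{s\le t}|M_s|$. Separately, $Q_t:=[M]_t-\langle M\rangle_t$ is a local martingale, and because $\langle M\rangle$ is continuous it is purely discontinuous with $\Delta Q_s=|\Delta M_s|^2$, so $[Q]_t=\sum_{s\le t}|\Delta M_s|^4\le\big(\sup_{s\le t}|\Delta M_s|\big)^2[M]_t$.

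Then comes the bootstrap. I would invoke the Burkholder--Davis--Gundy inequality $\|L_t\|_q\le c_q\|[L]_t^{1/2}\|_q$ for real martingales, with $c_q$ a universal constant for $1\le q\le2$ and $c_q\lesssim\sqrt q$ for $q\ge2$ --- for continuous $L$ this is Lemma \ref{sharp-BDG}, and the general case is classical --- applied at order $q=p/2\ge1$, so $c_{p/2}\lesssim\sqrt p$. Writing $a:=\||M_t|\|_p$, $x:=\|[M]_t^{1/2}\|_p$, $y:=\|\langle M\rangle_t^{1/2}\|_p$ and $z:=\|\sup_{s\le t}|\Delta M_s|\|_p$, the bound on $[Q]_t$ together with $[M]_t=\langle M\rangle_t+Q_t$ and H\"older's inequality give $x^2\le y^2+c_{p/2}\,zx$, hence $x\le y+c_{p/2}z$; and the bound on $[N]_t$ together with $|M_t|^2=N_t+[M]_t$, H\"older's inequality and Doob's maximal inequality ($\|M^*_t\|_p\le2\|M_t\|_p$) give $a^2\le4c_{p/2}\,ax+x^2$, hence $a\le5c_{p/2}x$. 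Chaining the two, $a\le5c_{p/2}(y+c_{p/2}z)\lesssim\sqrt p\,y+p\,z$, which is the asserted inequality.

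The genuinely delicate point --- the main obstacle --- is producing $\sqrt p$ rather than $p$ in front of $\|\langle M\rangle_t^{1/2}\|_p$. A direct It\^o expansion of $t\mapsto|M_t|^p$ yields only order $p$, because the Hessian of $|x|^p$ is of size $p(p-1)|x|^{p-2}$; the improvement to $\sqrt p$ forces one to route the estimate through the $L^p$ bound for the \emph{square function}, which has the optimal $\sqrt p$-dependence for $p\ge2$, and to close the quadratic self-reference above. A secondary subtlety is that the passage from the optional variation $[M]$ to the predictable one $\langle M\rangle$ --- the origin of the $p\,\|\sup_{s\le t}|\Delta M_s|\|_p$ term --- uses the hypothesis that $\langle M\rangle$ is continuous; without it, $\Delta Q_s$ would not equal $|\Delta M_s|^2$ and $[Q]_t$ would not be dominated by $\sup_{s\le t}|\Delta M_s|$ in this clean way.
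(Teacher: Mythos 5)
Your proposal takes a genuinely different route from the paper's. The paper discretises $M$, invokes Pinelis's discrete-time Rosenthal inequality for $(2,1)$-smooth Banach spaces (which operates directly on the conditional square function), and then passes to the limit via the Burkholder--Davis--Gundy and Stein inequalities; you instead stay entirely in continuous time, decompose $|M|^2=N+[M]$ and $[M]=\langle M\rangle+Q$, and close a pair of quadratic self-referencing inequalities by applying BDG at order $p/2$. That is an appealing and more self-contained architecture, and your bounds $[N]_t\le 4(M_t^*)^2[M]_t$, $[Q]_t\le(\sup_s|\Delta M_s|)^2[M]_t$ (the latter indeed using the continuity of $\langle M\rangle$), and the two quadratic chains $x^2\le y^2+c_{p/2}zx$, $a^2\le 4c_{p/2}ax+x^2$ are all correct as written. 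Your remark that the $\sqrt p$ (rather than $p$) coefficient in front of $\|\langle M\rangle_t^{1/2}\|_p$ is the delicate point is exactly right --- the paper achieves it by routing through Pinelis; you achieve it by routing through the square function.

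However, there is a load-bearing claim in the middle that you cannot treat as a throwaway. You apply the upper BDG inequality $\|L_t\|_q\le c_q\|[L]_t^{1/2}\|_q$ with $c_q\lesssim\sqrt q$ for $q\ge2$ to the local martingales $N$ and $Q$, both of which have jumps ($Q$ is even purely discontinuous), and you assert that ``the general case is classical.'' It is not. The sharp $\sqrt q$ dependence is classical only for \emph{continuous} local martingales (Barlow--Yor, Carlen--Kree, as in Lemma~\ref{sharp-BDG}) and, in discrete time, for conditionally symmetric martingales (Wang, 1991). The textbook constant for general cadlag local martingales, going back to Burkholder (1966), is of order $q$, and that is what most references state. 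If one only has $c_q\lesssim q$, your two quadratic inequalities chain to $a\lesssim c_{p/2}\,y+c_{p/2}^2\,z\lesssim p\,y+p^2\,z$, which is strictly weaker than the theorem. The fact that the general BDG constant for discontinuous martingales can be improved to $O(\sqrt q)$ does appear in the literature --- Hitczenko (1990, Probab.\ Theory Related Fields, ``Upper bounds for the $L_p$-norms of martingales'') proves it in discrete time, and one then needs a discretisation/limiting step to transfer it to cadlag local martingales --- but it is precisely the nontrivial ingredient, not a classical one. So either you should cite and invoke Hitczenko's result and carry out the passage to continuous time (at which point your proof is no shorter than the paper's, since that passage is much like the paper's own discretisation step), or you should argue the $\sqrt q$-BDG for $N$ and $Q$ directly. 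As it stands, this is a genuine gap.
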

We give a proof of this theorem in Appendix \ref{sec:rosenthal}. 
\begin{rmk}[Alternative version]
The bound \eqref{eq:rosenthal} immediately follows from the Rosenthal type inequality of \cite{ReTi03} and Theorem 3.1 in \cite{KaSz91} if both $\sqrt p$ and $p$ on the right hand side are replaced by $p/\log p$, and this version already yields an almost satisfactory bound for $\max_{i}|\Delta^n_iJ|^2$. 
We use the above version because it slightly weakens the assumptions on $d$ and $n$ necessary in Lemmas \ref{lemma:cov-eigen} and \ref{lemma:cor-eigen}. 
Besides, Theorem \ref{thm:rosenthal} may hold for any locally square integrable martingale $M$ in $\mathbb R^d$ with $M_0=0$ because this is the case if we do not care about the dependence on $p$ as above. The ``extra'' assumptions imposed in the theorem are convenient for the proof and not problematic for our applications. 
\end{rmk}

Finally, we prove two elementary inequalities for matrices. 
\begin{lemma}\label{lem:mat1}
Let $A_1,\dots,A_n,B_1,\dots,B_n$ are $d\times d$ symmetric matrices. Then
\[
\norm{\sum_{i=1}^n(A_i-B_i)^2}
\leq2\norm{\sum_{i=1}^nA_i^2}
+2\norm{\sum_{i=1}^nB_i^2}.
\] 
\end{lemma}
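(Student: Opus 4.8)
The plan is to reduce the assertion to the single elementary operator inequality $(A_i-B_i)^2\preceq 2A_i^2+2B_i^2$ applied termwise, which is the matrix analogue of the scalar bound $(a-b)^2\leq 2a^2+2b^2$. The starting observation is the identity
\[
2A_i^2+2B_i^2-(A_i-B_i)^2=A_i^2+A_iB_i+B_iA_i+B_i^2=(A_i+B_i)^2,
\]
valid for all $i$; since $A_i+B_i$ is symmetric, $(A_i+B_i)^2$ is positive semidefinite, so $(A_i-B_i)^2\preceq 2A_i^2+2B_i^2$ in the Loewner order for every $i$.

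Summing these $n$ inequalities, and using that each $(A_i-B_i)^2$ is itself positive semidefinite, I would obtain
\[
0\preceq\sum_{i=1}^n(A_i-B_i)^2\preceq 2\sum_{i=1}^nA_i^2+2\sum_{i=1}^nB_i^2.
\]
Then I would invoke the monotonicity of the spectral norm on the positive semidefinite cone: for symmetric matrices with $0\preceq X\preceq Y$ one has $\norm{X}=\lambda_1(X)\leq\lambda_1(Y)=\norm{Y}$. Applying this with $X=\sum_{i=1}^n(A_i-B_i)^2$ and $Y=2\sum_{i=1}^nA_i^2+2\sum_{i=1}^nB_i^2$, followed by the triangle inequality for $\norm{\cdot}$, gives
\[
\norm{\sum_{i=1}^n(A_i-B_i)^2}\leq\norm{2\sum_{i=1}^nA_i^2+2\sum_{i=1}^nB_i^2}\leq 2\norm{\sum_{i=1}^nA_i^2}+2\norm{\sum_{i=1}^nB_i^2},
\]
which is the claim. (Alternatively, one can avoid quoting Loewner monotonicity by testing against a unit vector $x$: since each $A_i-B_i$ is symmetric, $x^\top(\sum_i(A_i-B_i)^2)x=\sum_i|(A_i-B_i)x|^2\leq\sum_i(2|A_ix|^2+2|B_ix|^2)$, and then taking the supremum over $\abs{x}=1$ yields the same bound.)

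There is essentially no obstacle here; the only point requiring a moment's care is the elementary fact that on symmetric positive semidefinite matrices the spectral norm coincides with the largest eigenvalue and is therefore order-monotone, which is exactly what transfers the termwise Loewner inequality to the desired norm inequality.
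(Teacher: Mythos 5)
Your proof is correct but takes a genuinely different route from the paper's. You establish the termwise Loewner-order inequality $(A_i-B_i)^2\preceq 2A_i^2+2B_i^2$ via the identity $2A_i^2+2B_i^2-(A_i-B_i)^2=(A_i+B_i)^2\succeq0$, sum over $i$, and then pass to the spectral norm using its monotonicity on the positive semidefinite cone together with the triangle inequality. The paper instead expands the square and applies the triangle inequality first, reducing the claim to bounding the cross term $\norm{\sum_{i=1}^nA_iB_i}$; it then writes this as $\norm{AB}$ for the block matrices $A=(A_1\ \cdots\ A_n)\in\mathbb R^{d\times nd}$ and $B=(B_1;\ \dots;\ B_n)\in\mathbb R^{nd\times d}$, uses $\norm{AB}\le\norm{A}\norm{B}=\sqrt{\norm{AA^\top}\norm{B^\top B}}=\sqrt{\norm{\sum_iA_i^2}\norm{\sum_iB_i^2}}$, and finishes with AM--GM. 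Your argument is more elementary and avoids the block-matrix construction entirely; the paper's choice has the incidental advantage that the block decomposition it introduces in \eqref{eq:expand} is reused verbatim in the proof of Lemma~\ref{lem:schwarz}. Both approaches give the same constant $2$, and your vector-testing remark at the end is essentially a self-contained restatement of the Loewner-monotonicity step, so no separate citation is needed there.
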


\begin{proof}
Noting that $\|U\|=\|U^\top\|$ for any $U\in\mathbb R^{d\times d}$, we obtain
\ba{
\norm{\sum_{i=1}^n(A_i-B_i)^2}
&\leq\norm{\sum_{i=1}^nA_i^2}
+\norm{\sum_{i=1}^nB_i^2}
+2\norm{\sum_{i=1}^nA_iB_i}.
}
Let
\ben{\label{eq:expand}
A=\begin{pmatrix}
A_1 & \cdots & A_n
\end{pmatrix}
\in\mathbb R^{d\times nd}
\quad\text{and}\quad
B=\begin{pmatrix}
B_1 \\ \vdots \\ B_n
\end{pmatrix}\in\mathbb R^{nd\times d}.
}
Then
\ba{
\norm{\sum_{i=1}^nA_iB_i}
&=\norm{AB}
\leq\norm{A}\norm{B}
=\sqrt{\norm{AA^\top}\norm{B^\top B}}\\
&=\sqrt{\norm{\sum_{i=1}^nA_i^2}\norm{\sum_{i=1}^nB_i^2}}
\leq\frac{1}{2}\lpa\norm{\sum_{i=1}^nA_i^2}+\norm{\sum_{i=1}^nB_i^2}\rpa.
}
Combining these two bounds gives the desired result. 
\end{proof}

\begin{lemma}\label{lem:mat2}
Let $A_1,\dots,A_n$ are $d\times d$ positive semidefinite matrices. Then
\[
\norm{\sum_{i=1}^nA_i^2}
\leq\max_{1\leq i\leq n}\|A_i\|\norm{\sum_{i=1}^nA_i}.
\] 
\end{lemma}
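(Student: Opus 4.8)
The plan is to bound the spectral norm of $\sum_{i=1}^n A_i^2$ by writing it as a product of a block row matrix and a block column matrix, exactly as in the proof of Lemma \ref{lem:mat1}. First I would set
\[
A=\begin{pmatrix} A_1 & \cdots & A_n \end{pmatrix}\in\mathbb R^{d\times nd},
\qquad
D=\begin{pmatrix} A_1 & & \\ & \ddots & \\ & & A_n \end{pmatrix}\in\mathbb R^{nd\times nd},
\]
and observe that $\sum_{i=1}^n A_i^2 = A D A^\top$ (the block-diagonal $D$ sandwiched between the block row and its transpose reproduces $\sum_i A_i^2$, using symmetry $A_i=A_i^\top$). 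Then submultiplicativity of the spectral norm gives $\|\sum_i A_i^2\|=\|ADA^\top\|\le\|A\|\,\|D\|\,\|A^\top\|=\|D\|\,\|A\|^2$.

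Next I would identify the two factors. Since $D$ is block-diagonal with blocks $A_1,\dots,A_n$, its spectral norm is $\|D\|=\max_{1\le i\le n}\|A_i\|$. And $\|A\|^2=\|AA^\top\|=\|\sum_{i=1}^n A_i^2\|$ — but this is circular if used directly, so instead I would use $\|A\|^2 = \|A^\top A\|$... no; the clean route is $\|A\|^2=\|AA^\top\|=\norm{\sum_{i=1}^n A_i^2}$ only reproduces the left side. The correct alternative: bound $\|A\|^2$ differently. Actually the slick argument uses a different split: write $\sum_i A_i^2 = A^\top A$ where now $A=\begin{pmatrix} A_1^{1/2}A_1 \\ \vdots \end{pmatrix}$... this is getting complicated. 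Let me instead use the factorization $\sum_i A_i^2 = \sum_i A_i^{1/2}(A_i)A_i^{1/2}$ — legitimate since $A_i\succeq0$ so $A_i^{1/2}$ exists and $A_i^{1/2}A_iA_i^{1/2}=A_i^2$. With $R=\begin{pmatrix} A_1^{1/2} & \cdots & A_n^{1/2}\end{pmatrix}$ and block-diagonal $\tilde D=\diag(A_1,\dots,A_n)$, we get $\sum_i A_i^2 = R\tilde D R^\top$, so
\[
\norm{\sum_{i=1}^n A_i^2}\le \|\tilde D\|\,\|R\|^2 = \Big(\max_{1\le i\le n}\|A_i\|\Big)\,\norm{RR^\top} = \Big(\max_{1\le i\le n}\|A_i\|\Big)\,\norm{\sum_{i=1}^n A_i},
\]
since $RR^\top=\sum_i A_i^{1/2}(A_i^{1/2})^\top=\sum_i A_i$ and $\|\tilde D\|=\max_i\|A_i\|$.

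The only mild obstacle is justifying the existence of $A_i^{1/2}$ and the identities $A_i^{1/2}A_iA_i^{1/2}=A_i^2$ and $RR^\top=\sum_i A_i$, all of which are immediate from positive semidefiniteness and the symmetry of the square root. I would also remark that $\|\tilde D\|=\max_i\|A_i\|$ because the eigenvalues of a block-diagonal symmetric matrix are the union of the eigenvalues of the blocks. Everything else is just submultiplicativity of the spectral norm together with the standard identity $\|M\|^2=\|MM^\top\|$, so the proof is short.
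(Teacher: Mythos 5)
Your final argument is correct, but it takes a genuinely different route from the paper's. The paper's proof is an operator-order argument: with $K=\max_i\|A_i\|$, each matrix $KA_i-A_i^2$ is positive semidefinite (the eigenvalues of $A_i$ lie in $[0,K]$ and $\lambda\mapsto K\lambda-\lambda^2$ is nonnegative there), so $K\sum_iA_i-\sum_iA_i^2\succeq 0$, and Weyl's inequality then gives $\norm{\sum_iA_i^2}\leq K\norm{\sum_iA_i}$ because both matrices are positive semidefinite. You instead factorize $\sum_iA_i^2=R\tilde D R^\top$ with $R=(A_1^{1/2}\;\cdots\;A_n^{1/2})$ and block-diagonal $\tilde D=\diag(A_1,\dots,A_n)$, and finish by submultiplicativity; this mirrors the block-matrix trick used in Lemma~\ref{lem:mat1} and has the small extra cost of invoking the matrix square root. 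The two are in fact closely linked: since $\tilde D\preceq K I_{nd}$, your factorization gives $R\tilde D R^\top\preceq KRR^\top=K\sum_iA_i$, which is precisely the paper's positive-semidefinite inequality viewed through your decomposition. One small correction to the exploratory part of your write-up: the first identity you floated, $\sum_iA_i^2=ADA^\top$ with $A=(A_1\;\cdots\;A_n)$ and $D=\diag(A_1,\dots,A_n)$, is not an identity at all — that product equals $\sum_iA_i^3$, not $\sum_iA_i^2$ — but since you discarded that route anyway (on the unrelated grounds of circularity), this does not affect the validity of your final proof.
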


\begin{proof}
Write $K=\max_{1\leq i\leq n}\|A_i\|$. It is not difficult to check that $KA_i-A_i^2$ is positive semidefinite for all $i=1,\dots,n$. Hence $K\sum_{i=1}^nA_i-\sum_{i=1}^nA_i^2$ is also positive semidefinite. Now the desired result follows from Weyl's inequality.
\end{proof}

\begin{proof}[\bf Proof of Theorem \ref{jump-bound2}]
We divide the proof into two steps. 

\noindent\textbf{Step 1}. Define the process $J=(J_t)_{t\in[0,1]}$ by \eqref{def:J}. Also, define the process $\wt D=(\wt D_{t})_{t\in[0,1]}$ by
\ba{
\wt D_{t}&=\int_0^t\wt b_sds\quad\text{with}\quad\wt b_s=b_s+\int_E\kappa'\bra{\delta(s,z)}\levy(dz),
}
and set $X=\wt D+M$. Then we have
\ban{
\norm{{[Z,Z]}^n_1}&=\|\Delta^nZ\|^2
=\|\Delta^nX+\Delta^nJ\|^2
\nonumber\\
&\leq2(\|\Delta^nX\|^2+\|\Delta^nJ\|^2)
=2\bra{\norm{{[X,X]}^n_1}+\norm{{[J,J]}^n_1}}.\label{xx-bound}
}
For $j=1,\dots,d$, we have by \eqref{eq:bdd-jump}
\[
\abs{\int_E\delta_j(s,z)1_{\{|\delta_j(s,z)|>1\}}\levy(dz)}
\leq\int_E\delta_j(s,z)^2\levy(dz)
\leq\Lambda\ol\gamma_2.
\]
Thus, $|\wt b_s|_\infty\leq\Lambda(1+\ol\gamma_2)$. 
Hence, applying Theorem \ref{cont-bound} with $u=\log(4/\alpha)$, we have
\be{
\norm{{[X,X]}^n_1}\lesssim \Lambda\bra{1+\ol\gamma_2}\bra{1+\frac{d+\log(1/\alpha)}{n}}
}
with probability at least $1-\alpha/2$. Therefore, we complete the proof once we show that the bound
\ben{\label{IA}
\norm{{[J,J]}^n_1}
\lesssim \Lambda\log^2(d/\alpha)\bra{\frac{d\log n}{n}\ol{\gamma}_2+\log^2n}
}
holds with probability at least $1-\alpha/2$.

\noindent\textbf{Step 2}. Let $p=\log (2/\alpha)$. Observe that $p\geq2$ and
\[
{[J,J]}^n_1-[J,J]_1=\sum_{i=1}^n\cbra{(\Delta^n_iJ)(\Delta^n_iJ)^\top-\Delta^n_i[J,J]}.
\]
Therefore, we have by Theorem \ref{matrix-burkholder}
\ba{
\norm{{[J,J]}^n_1-[J,J]_1}_{L^p(\sch^\infty)}
\leq c_0\log(d/\alpha)\norm{\bra{\sum_{i=1}^n\cbra{(\Delta^n_iJ)(\Delta^n_iJ)^\top-\Delta^n_i[J,J]}^2}^{1/2}}_{L^p(\sch^\infty)},
}
where $c_0>0$ is a universal constant. 
Noting that $\|A^{1/2}\|=\|A\|^{1/2}$ for any positive semidefinite $d\times d$ matrix $A$, we have by Lemmas \ref{lem:mat1} and \ref{lem:mat2}
\ba{
&\norm{\lpa\sum_{i=1}^n\{(\Delta^n_iJ)(\Delta^n_iJ)^\top-\Delta^n_i[J,J]\}^2\rpa^{1/2}}\\
&\leq\sqrt{2\max_{1\leq i\leq n}|\Delta^n_iJ|^2\norm{{[J,J]}^n_1}}
+\sqrt{2\max_{1\leq i\leq n}\norm{\Delta^n_i[J,J]}\norm{[J,J]_1}}.
}
Combining these bounds with \eqref{eq:jqv-bound} and \ref{ass:SH}, we obtain
\ba{
\norm{{[J,J]}^n_1}_{L^p(\sch^\infty)}
&\leq\norm{{[J,J]}^n_1-[J,J]_1}_{L^p(\sch^\infty)}+\norm{[J,J]_1}_{L^p(\sch^\infty)}\\
&\leq \sqrt2c_0\log(d/\alpha)\norm{\sqrt{\max_{1\leq i\leq n}|\Delta^n_iJ|^2\norm{{[J,J]}^n_1}}}_{p}
+\{\sqrt2c_0\log(d/\alpha)+1\}\Lambda\\
&\leq c_0^2\log^2(d/\alpha)\norm{\max_{1\leq i\leq n}|\Delta^n_iJ|^2}_{p}
+\frac{1}{2}\norm{{[J,J]}^n_1}_{L^p(\sch^\infty)}
+\sqrt2\{c_0\log(d/\alpha)+1\}\Lambda,
}
where the last line follows from the AM-GM inequality. Thus we conclude
\[
\norm{{[J,J]}^n_1}_{L^p(\sch^\infty)}
\leq 2c_0^2\log^2(d/\alpha)\norm{\max_{1\leq i\leq n}|\Delta^n_iJ|^2}_{p}
+2\sqrt2\{c_0\log(d/\alpha)+1\}\Lambda.
\]
Since $p\leq\log n$, we have
\[
\norm{\max_{1\leq i\leq n}|\Delta^n_iJ|^2}_{p}
\leq\norm{\max_{1\leq i\leq n}|\Delta^n_iJ|^2}_{\log n}
\leq\bra{\sum_{i=1}^n\E\left[|\Delta^n_iJ|^{2\log n}\right]}^{1/\log n}
\leq e\max_{1\leq i\leq n}\norm{\Delta^n_iJ}_{2\log n}^2.
\]
Theorem \ref{thm:rosenthal} and \eqref{eq:bdd-jump} yield
\ba{
\|\Delta^n_iJ\|_{2\log n}&\lesssim\bra{\sqrt{\log n}\norm{\sqrt{\int_{t_{i-1}}^{t_i}\int_E|\delta(s,z)|^2\nu(ds,dz)}}_{2\log n}+(\log n)\norm{\sup_{s\in(t_{i-1},t_i]}|\Delta J_s|}_{2\log n}}\\
&\leq\sqrt\Lambda\bra{\sqrt{(\log n)\sum_{j=1}^d\int_{t_{i-1}}^{t_i}\int_E\gamma_j(z)^2\nu(ds,dz)}+\log n}\\
&\leq\sqrt\Lambda\bra{\sqrt{\frac{d\log n}{n}\ol{\gamma}_2}+\log n}.
}
Thus we obtain
\ba{
\norm{{[J,J]}^n_1}_{L^p(\sch^\infty)}
&\leq c_1\log^2(d/\alpha)\Lambda\bra{\frac{d\log n}{n}\ol{\gamma}_2+\log^2 n}
+c_1\log(d/\alpha)\Lambda\\
&\leq 2c_1\Lambda\log^2(d/\alpha)\bra{\frac{d\log n}{n}\ol{\gamma}_2+\log^2n},
}
where $c_1>0$ is a universal constant. 
Finally, we have by Markov's inequality
\ba{
P\bra{\norm{{[J,J]}^n_1}>e\cdot 2c_1\Lambda\log^2(d/\alpha)\bra{\frac{d\log n}{n}\ol{\gamma}_2+\log^2n}}
\leq e^{-p}=\frac{\alpha}{2}.
}
This completes the proof of \eqref{IA}.
\end{proof}

\subsection{Proofs for Section \ref{sec:factor}}\label{sec:proof3}

\subsubsection{Proof of Lemma \ref{lemma:cov-eigen}}

\begin{lemma}\label{lem:ostrowski}
Let $A\in\mathbb R^{r\times r}$ and $B\in\mathbb R^{d\times r}$. Assume that $A$ is positive semidefinite. Then, for any $j=1,\dots,d$,
\[
\lambda_r(A)\lambda_j(BB^\top)\leq\lambda_j(BAB^\top)\leq\lambda_1(A)\lambda_j(BB^\top).
\]
\end{lemma}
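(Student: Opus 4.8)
The plan is to reduce the statement to the standard monotonicity of eigenvalues under the Loewner (positive semidefinite) order. Since $A$ is positive semidefinite, its eigenvalues satisfy $\lambda_1(A)\geq\cdots\geq\lambda_r(A)\geq0$, and the spectral theorem gives the two-sided Loewner bound $\lambda_r(A)I_r\preceq A\preceq\lambda_1(A)I_r$, where $X\preceq Y$ means that $Y-X$ is positive semidefinite. The map $M\mapsto BMB^\top$ preserves this order, because $B(Y-X)B^\top$ is positive semidefinite whenever $Y-X$ is; applying it to the displayed bound yields
\[
\lambda_r(A)\,BB^\top\preceq BAB^\top\preceq\lambda_1(A)\,BB^\top,
\]
an inequality between symmetric $d\times d$ matrices.

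Next I would invoke Weyl's monotonicity principle: if $X\preceq Y$ are symmetric $d\times d$ matrices, then $\lambda_j(X)\leq\lambda_j(Y)$ for every $j=1,\dots,d$. This is immediate from the Courant--Fischer min-max characterization of eigenvalues together with the pointwise inequality $x^\top Xx\leq x^\top Yx$, $x\in\mathbb R^d$. Applying it to both halves of the previous display gives $\lambda_j(\lambda_r(A)BB^\top)\leq\lambda_j(BAB^\top)\leq\lambda_j(\lambda_1(A)BB^\top)$ for each $j$.

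Finally, since $\lambda_r(A)$ and $\lambda_1(A)$ are non-negative scalars, eigenvalues are positively homogeneous, i.e.~$\lambda_j(cM)=c\,\lambda_j(M)$ for $c\geq0$; substituting this into the previous display produces exactly $\lambda_r(A)\lambda_j(BB^\top)\leq\lambda_j(BAB^\top)\leq\lambda_1(A)\lambda_j(BB^\top)$. I do not expect any genuine obstacle here: the argument is three short, standard steps, and indeed this is essentially Ostrowski's theorem. The only mild point to keep in mind is that when $d>r$ the matrix $BB^\top$ is rank-deficient, so many of the $\lambda_j(BB^\top)$ vanish; for those indices the middle term is squeezed between $0$ and $0$ (being itself a non-negative eigenvalue of the positive semidefinite matrix $BAB^\top$), so the claimed chain of inequalities remains valid, now trivially.
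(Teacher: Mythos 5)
Your proof is correct and is essentially the paper's argument, just repackaged: the pointwise quadratic-form inequality $\lambda_r(A)\,x^\top BB^\top x\le x^\top BAB^\top x\le\lambda_1(A)\,x^\top BB^\top x$ that the paper inserts directly into the Courant--Fischer max--min is exactly the Loewner sandwich $\lambda_r(A)BB^\top\preceq BAB^\top\preceq\lambda_1(A)BB^\top$ you derive, and Weyl's monotonicity is precisely the Courant--Fischer step the paper carries out inline.
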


\begin{proof}
Note that $\lambda_r(A)\geq0$ by assumption. Then, we have by Theorems 11.4 and 11.8 in \cite{MaNe19}
\ba{
\lambda_j(BAB^\top)&=\max_{C\in\mathbb R^{d\times(d-j)}:C^\top C=I_{d-j}}\min_{x\in\mathbb R^d\setminus\{0\}:C^\top x=0}\frac{x^\top BAB^\top x}{x^\top x}\\
&\geq\max_{C\in\mathbb R^{d\times(d-j)}:C^\top C=I_{d-j}}\min_{x\in\mathbb R^d\setminus\{0\}:C^\top x=0}\frac{\lambda_r(A)x^\top BB^\top x}{x^\top x}\\
&=\lambda_r(A)\lambda_j(BB^\top),
}
and similarly $\lambda_j(BAB^\top)\leq\lambda_1(A)\lambda_j(BB^\top)$.
\end{proof}

\begin{proof}[\bf Proof of Lemma \ref{lemma:cov-eigen}]
We may assume $n>r$ without loss of generality. 
First, note that $\lambda_j({[Y,Y]}^n_1)=\lambda_j(\Delta^nY(\Delta^nY)^\top)=\mf s_j(\Delta^n_jY)^2$ for $j=1,\dots,d\wedge n$. 
Next, we have by Corollary 7.3.5 in \cite{HJ2013}
\[
|\mf{s}_j(\Delta^nY)-\mf{s}_j(\beta\Delta^nF)|\leq\|\Delta^nZ\|
\]
for $j=1,\dots,d\wedge n$. 
Therefore, for $j=r_\tau+1,\dots,d$,
\ba{
\sqrt{\lambda_j({[Y,Y]}^n_1)}
&\leq\sqrt{\lambda_{r_1+1}({[Y,Y]}^n_1)}
=\mf{s}_{r_\tau+1}(\Delta^nY)
\leq\mf{s}_{r_\tau+1}(\beta\Delta^nF)+\|\Delta^nZ\|\\
&=\sqrt{\lambda_{r_\tau+1}(\beta{[F,F]}^n_1\beta^\top)}+\sqrt{\|{[Z,Z]}^n_1\|}.
}
Similarly, for $j=1,\dots,r_\tau$,
\ba{
\sqrt{\lambda_{j}(\beta{[F,F]}^n_1\beta^\top)}-\sqrt{\|{[Z,Z]}^n_1\|}
\leq\sqrt{\lambda_j({[Y,Y]}^n_1)}
\leq\sqrt{\lambda_{j}(\beta{[F,F]}^n_1\beta^\top)}+\sqrt{\|{[Z,Z]}^n_1\|}.
}
Now, by Theorem \ref{localization} and \eqref{ass:d-n}, we have $\|{[Z,Z]}^n_1\|=O_p(\Lambda_nd^\tau)$. Hence, by \ref{ass:svf} and \eqref{eq:svf}, we obtain $\|{[Z,Z]}^n_1\|=o_p(d^\alpha)$ for any $\alpha>\tau$. 
Meanwhile, since $\|{[F,F]}^n_1-[F,F]_1\|\to^p0$ as $n\to\infty$ by Theorem 4.47a in \cite[Chapter I]{JS}, we have $\lambda_{j}(\beta{[F,F]}^n_1\beta^\top)\asymp_p d^{\alpha_j}$ for $j=1,\dots,r$ by Lemma \ref{lem:ostrowski} and \ref{ass:F}. 
In addition, since $\rank(\beta{[F,F]}^n_1\beta^\top)\leq r$, we have $\lambda_{j}(\beta{[F,F]}^n_1\beta^\top)=0$ for $j>r$. Combining these results completes the proof. 
\end{proof}

\subsubsection{Proof of Proposition \ref{prop:bn}}

First, since
\ba{
P(\hat r^{BN}_\tau>r_\tau)
&=P\bra{\lambda_j({[Y,Y]}^n_1)>d^\tau g(d)\text{ for some $j>r_\tau$}}
\leq P\bra{\lambda_{r_\tau+1}({[Y,Y]}^n_1)>d^\tau g(d)},
}
we have $P(\hat r^{BN}_\tau>r_\tau)\to0$ by Lemma \ref{lemma:cov-eigen}\ref{eigen-weak} and $\lim_{d\to\infty}g(d)=\infty$. 
If $r_\tau=0$, this completes the proof. Otherwise, since
\ba{
P(\hat r^{BN}_\tau<r_\tau)
&=P\bra{\lambda_j({[Y,Y]}^n_1)\leq d^\tau g(d)\text{ for all $j\geq r_\tau$}}
\leq P\bra{\lambda_{r_\tau}({[Y,Y]}^n_1)\leq d^\tau g(d)},
}
we have $P(\hat r^{BN}_\tau<r_\tau)\to0$ by Lemma \ref{lemma:cov-eigen}\ref{eigen-strong} and \eqref{eq:svf}. 
Consequently, $P(\hat r^{BN}_\tau\neq r_\tau)=P(\hat r^{BN}_\tau<r_\tau)+P(\hat r^{BN}_\tau>r_\tau)\to0$.
\qed

\subsubsection{Proof of Proposition \ref{prop:pelger}}

First, since
\ba{
P(\hat r^{P}_\tau(\gamma)>r_\tau)
&=P\bra{ER_j({[Y,Y]}^n_1)>1+\gamma\text{ for some $j>r_\tau$}}\\
&=P\bra{\lambda_j({[Y,Y]}^n_1)>(1+\gamma)\lambda_{j+1}({[Y,Y]}^n_1)+\gamma d^\tau g(d)\text{ for some $j>r_\tau$}}\\
&\leq P\bra{\lambda_{r_\tau+1}({[Y,Y]}^n_1)>\gamma d^\tau g(d)},
}
we have $P(\hat r^{P}_\tau(\gamma)>r_\tau)\to0$ by Lemma \ref{lemma:cov-eigen}\ref{eigen-weak} and $\lim_{d\to\infty}g(d)=\infty$. 
If $r_\tau=0$, this completes the proof. Otherwise, since
\ba{
P(\hat r^{P}_\tau(\gamma)<r_\tau)
&\leq P\bra{ER_{r_\tau}({[Y,Y]}^n_1)\leq 1+\gamma}\\
&\leq P\bra{\lambda_{r_\tau}({[Y,Y]}^n_1)\leq (1+\gamma)\lambda_{r_\tau+1}({[Y,Y]}^n_1)+\gamma d^\tau  g(d)},
}
we have $P(\hat r^{P}_\tau(\gamma)<r_\tau)\to0$ by Lemma \ref{lemma:cov-eigen} and \eqref{eq:svf}. 
Consequently, $P(\hat r^{P}_\tau(\gamma)\neq r_\tau)=P(\hat r^{P}_\tau(\gamma)<r_\tau)+P(\hat r^{P}_\tau(\gamma)>r_\tau)\to0$.
\qed

\subsubsection{Proof of Lemma \ref{lemma:cor-eigen}}

\begin{lemma}\label{lemma:diag}
Under the assumptions of Lemma \ref{lemma:cor-eigen},
\[
\max_{1\leq j\leq d}{[Y^j,Y^j]}^n_1=O_p\bra{\Lambda_n\bra{\frac{\log d}{\log\log d}}^2}
\quad\text{and}\quad
\max_{1\leq j\leq d}\bra{{[Y^j,Y^j]}^n_1}^{-1}=O_p(1)
\]
as $n\to\infty$.
\end{lemma}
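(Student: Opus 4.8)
The plan is to reduce $Y^j=(\beta F)^j+Z^j$ to ingredients already controlled in this section. After the standard localization used for Theorem~\ref{localization} and an application of (the localized analogue of) Lemma~\ref{lemma:bdd-jump}, we may assume the bounds of \ref{ass:SH}-type hold with $\Lambda$ a fixed multiple of $\Lambda_n$ and that \eqref{eq:bdd-jump} holds, and we write $Z^j=\widetilde D^j+M^j+J^j$ with $\widetilde D^j$ the (modified) drift, $M^j=\int_0^\cdot(\sigma_s\,dW_s)^j$ and $J^j=\delta_j\star(\mu-\nu)$, exactly as in the proofs of Theorems~\ref{cont-bound} and \ref{jump-bound2}. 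For the first assertion we use ${[Y^j,Y^j]}^n_1\le 2{[(\beta F)^j,(\beta F)^j]}^n_1+6\big({[\widetilde D^j,\widetilde D^j]}^n_1+{[M^j,M^j]}^n_1+{[J^j,J^j]}^n_1\big)$ and bound each term uniformly in $j$. The factor term satisfies ${[(\beta F)^j,(\beta F)^j]}^n_1\le|\beta^{j\cdot}|^2\norm{{[F,F]}^n_1}$; here $\norm{{[F,F]}^n_1}=O_p(1)$ because $r$ is fixed, and $\max_j|\beta^{j\cdot}|^2$ is controlled by \ref{ass:R}(iii), so this term is of the claimed order. The drift term is deterministically $O(\Lambda_n^2/n)=o(\Lambda_n)$. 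For $M^j$, running the proof of Theorem~\ref{cont-bound} with $d=1$ gives $P\big(|{[M^j,M^j]}^n_1-{[M^j,M^j]}_1|>u\big)\le 2\exp\bra{-\frac{cu^2n}{\Lambda_n^2+\Lambda_n u}}$, and since $\log d\ll n$ by \eqref{ass:d-n} and ${[M^j,M^j]}_1\le C\Lambda_n$, a union bound over $j$ yields $\max_j{[M^j,M^j]}^n_1=O_p(\Lambda_n)$.

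The essential step is the jump term $\max_j{[J^j,J^j]}^n_1$. One cannot combine the (slow) convergence of ${[J^j,J^j]}^n_1$ to ${[J^j,J^j]}_1$ with a union bound over the $d$ coordinates, so instead I would run the proof of Theorem~\ref{jump-bound2} coordinatewise, applying Theorem~\ref{matrix-burkholder} with $d=1$, the scalar forms of Lemmas~\ref{lem:mat1}--\ref{lem:mat2}, and the ``alternative version'' of Theorem~\ref{thm:rosenthal} in which $\sqrt p$ and $p$ are replaced by $p/\log p$. Taking $p$ of order $\log(d\wedge n)$, the Rosenthal bound gives $\norm{\Delta^n_iJ^j}_{2p}\lesssim\sqrt{\Lambda_n}\,\tfrac{p}{\log p}\bra{\sqrt{\ol\gamma_{m,2}/n}+1}$, and since $p/\log p\lesssim\log d/\log\log d$ and $\ol\gamma_{m,2}=O(1)$, the argument of that proof (including its AM--GM step) yields $\norm{{[J^j,J^j]}^n_1}_{p}\lesssim\Lambda_n(\log d/\log\log d)^2$ uniformly in $j$. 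Markov's inequality for the $p$-th moment together with a union bound over $j$ then gives $\max_j{[J^j,J^j]}^n_1=O_p\big(\Lambda_n(\log d/\log\log d)^2\big)$, and combining the four estimates proves the first assertion.

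For the second assertion, write $Y^j=Y^j_0+C^j+N^j$ with $C^j$ the continuous part (drift plus continuous martingale) of $Y^j$ and $N^j$ its purely discontinuous martingale part. Since ${[N^j,N^j]}^n_1\ge0$ we get ${[Y^j,Y^j]}^n_1\ge{[C^j,C^j]}^n_1-2\abs{{[C^j,N^j]}^n_1}$, and from $(a+b)^2\ge\tfrac12b^2-a^2$ we get ${[C^j,C^j]}^n_1\ge\tfrac12{[(Y^c)^j,(Y^c)^j]}^n_1-{[D^j,D^j]}^n_1$ with ${[D^j,D^j]}^n_1=O_p(\Lambda_n^2/n)=o_p(1)$ by \eqref{ass:d-n}. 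Writing ${[(Y^c)^j,(Y^c)^j]}^n_1={[(Y^c)^j,(Y^c)^j]}_1+\zeta^n_j$, the remainder $\zeta^n_j$ is a martingale to which the scalar Bernstein bound applies with parameters of order $\Lambda_n^4/n$ and $\Lambda_n^2/n$; since $\Lambda_n=O(f(d))$ is slowly varying, $\Lambda_n^4=o(d^{1-\tau})$, so \eqref{ass:d-n} forces $\max_j|\zeta^n_j|=o_p(1)$, while $\min_j{[(Y^c)^j,(Y^c)^j]}_1$ is bounded away from $0$ in probability by \ref{ass:R}(i). Finally, ${[C^j,N^j]}^n_1$ decomposes into the sum of two (approximate) martingales whose $p$-th moments ($p\asymp\log(d\wedge n)$) are $o(1)$, again by combining the moment bounds of Theorems~\ref{cont-bound} and \ref{thm:rosenthal} with \eqref{ass:d-n}, so $\max_j\abs{{[C^j,N^j]}^n_1}=o_p(1)$. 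Putting these together, $\min_j{[Y^j,Y^j]}^n_1$ is bounded away from $0$ in probability, which is the second assertion.

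The main obstacle is the jump term: because ${[J^j,J^j]}^n_1$ does not concentrate around its limit, the bound must come from the Rudelson / non-commutative-martingale route of Theorem~\ref{jump-bound2} rather than a law-of-large-numbers argument, and the $(\log d/\log\log d)^2$ factor arises precisely from pairing the exponent $p\asymp\log(d\wedge n)$ needed for the union bound over the $d$ coordinates with the $p/\log p$ growth in the Rosenthal inequality. A secondary technical point is ensuring that all the discretization-error terms in the lower bound are $o_p(1)$ uniformly in $j$; this is where the explicit rate in \eqref{ass:d-n} and the slow variation of $\Lambda_n$ (from \ref{ass:svf}) enter.
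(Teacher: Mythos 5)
Your overall blueprint (localize, split into drift, continuous martingale, and compensated jump part; use non-negativity of the jump quadratic variation for the lower bound; control everything with moment bounds at level $p\asymp\log d$ and a union bound over the $d$ coordinates) is the same as the paper's. There is, however, a genuine gap in how you bound the jump term $\max_j{[J^j,J^j]}^n_1$. You propose to run the proof of Theorem \ref{jump-bound2} coordinatewise (scalar Burkholder--Gundy with constant $O(p)$, the scalar versions of Lemmas \ref{lem:mat1}--\ref{lem:mat2}, and then AM--GM) and claim this yields $\norm{{[J^j,J^j]}^n_1}_p\lesssim\Lambda_n(\log d/\log\log d)^2$. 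It does not: the AM--GM step in that proof necessarily \emph{squares} the Burkholder--Gundy constant, producing a factor $p^2\asymp(\log d)^2$ in front of $\norm{\max_i|\Delta^n_iJ^j|^2}_p$ (this is exactly where $\log^2(d/\alpha)$ comes from in the displayed estimate leading to \eqref{IA}). Moreover, to control $\norm{\max_{1\leq i\leq n}|\Delta^n_iJ^j|^2}_p$ you need moment level at least $\log n$, not $\log(d\wedge n)$, so the Rosenthal constant that actually enters is of order $\log n/\log\log n$, not $p/\log p$. Putting these together, your route gives something of the form $\Lambda_n(\log d)^2(\log n/\log\log n)^2$, not the asserted $\Lambda_n(\log d/\log\log d)^2$.

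The paper avoids both problems by \emph{not} going through the matrix-Burkholder/AM--GM machinery in the scalar case. Instead it first combines the factor and idiosyncratic jumps into a single compensated jump martingale $J=\delta^Y\star(\mu-\nu)$ with $\delta^Y=\beta\delta^F+\delta$ (so that $Y=Y_0+\widetilde D+M+J$ with $M=Y^c$), and then uses integration by parts to write
\[
{[J^j,J^j]}^n_1-[J^j,J^j]_1
=2\int_{0}^{1}\sum_{i=1}^n 1_{(t_{i-1},t_i]}(s)\,(J^j_{s-}-J^j_{t_{i-1}})\,dJ^j_s,
\]
a single purely discontinuous scalar martingale, to which it applies the Rosenthal-type inequality of \cite{ReTi03} with constant $p/\log p$ directly. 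The angle-bracket and jump terms arising from that application themselves involve $J^j_{s-}-J^j_{t_{i-1}}$, which are controlled by a second, nested application of the same inequality; each application contributes one factor $p/\log p$, and the max over jump times is dominated by a sum against $\mu$ (so no extra union bound over $i$ is needed). With $p=2\vee\log d$ this gives exactly $(\log d/\log\log d)^2$. Your decomposition that keeps $\beta F$ and $Z$ separate, and your lower-bound argument via $Y^j=Y^j_0+C^j+N^j$, are workable in outline (the paper gets the lower bound from the single error term $R=\max_j|{[Y^j,Y^j]}^n_1-{[J^j,J^j]}^n_1-[M^j,M^j]_1|=o_p(1)$ together with \ref{ass:R}(i)), but the proof of the first assertion as written does not yield the stated rate without replacing the Theorem \ref{jump-bound2} route by the integration-by-parts/nested-Rosenthal argument above.
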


The proof of Lemma \ref{lemma:diag} is more or less routine but lengthy, so we postpone it to Appendix \ref{sec:proof-diag}. 

\begin{proof}[\bf Proof of Lemma \ref{lemma:cor-eigen}]
By Ostrowski's theorem (cf.~Theorem 4.5.9 in \cite{HJ2013}), for any $j=1,\dots,d$,
\[
\lambda_j\bra{{[Y,Y]}^n_1}\bra{\max_{1\leq j\leq d}{[Y^j,Y^j]}^n_1}^{-1}\leq\lambda_j\bra{{R}_{Y,n}}\leq\lambda_j\bra{{[Y,Y]}^n_1}\max_{1\leq j\leq d}\bra{{[Y^j,Y^j]}^n_1}^{-1}.
\]
Hence, the desired result follows from Lemmas \ref{lemma:cov-eigen} and \ref{lemma:diag} as well as \eqref{eq:svf}.
\end{proof}

\appendix

\section*{Appendix}

\section{Additional proofs}\label{sec:a-proof}

\subsection{Proof of Theorem \ref{localization}}\label{pr-local}

\begin{lemma}\label{lemma:l2}
Assume \ref{ass:SH}. There is a universal constant $C>0$ such that
\[
\norm{{[Z,Z]}^n_1}_{L^2(\sch^\infty)}\leq C\Lambda\bra{1+\ol\gamma_2\bra{1+\frac{d}{\sqrt n}}}.
\]
\end{lemma}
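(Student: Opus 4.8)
The plan is to derive Lemma~\ref{lemma:l2} as the $L^2(\sch^\infty)$ counterpart of Theorems~\ref{cont-bound} and~\ref{jump-bound2}, reusing the decomposition of $Z$ from their proofs. First, by Lemma~\ref{lemma:bdd-jump} I may assume \eqref{eq:bdd-jump}, so that $J=\delta\star(\mu-\nu)$ from \eqref{def:J} is a well-defined locally square integrable martingale; writing $Z=X+J$ with $X=\wt D+M$ exactly as in Step~1 of the proof of Theorem~\ref{jump-bound2}, the elementary bound \eqref{xx-bound} gives
\[
\norm{{[Z,Z]}^n_1}\leq 2\norm{{[X,X]}^n_1}+2\norm{{[J,J]}^n_1},
\]
so it suffices to estimate each term in $L^2(\sch^\infty)$.

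For the continuous part $X$, I would note that its coefficients satisfy the hypotheses of Theorem~\ref{cont-bound} after replacing $\Lambda$ by a constant of the order appearing in that proof (recall $|\wt b_s|_\infty\leq\Lambda(1+\ol\gamma_2)$, $\|\sigma_s\|^2\leq\Lambda$, $\|[X,X]_1\|=\|[M,M]_1\|\leq\Lambda$): the deterministic estimates behind \eqref{eq:basic-est} control $\norm{{[X,X]}^n_1}$ by a universal multiple of the drift contribution together with $\norm{R}$, where $R={[M,M]}^n_1-[M,M]_1$, and \eqref{eq:D-est} provides a sub-exponential tail bound for $\norm{R}$. Integrating $\E\sbra{\norm{R}^2}=\int_0^\infty 2t\,P(\norm{R}>t)\,dt$ then yields the required $L^2$ bound for $\norm{{[X,X]}^n_1}$; since $d/n\leq d/\sqrt n$, this is of the stated form. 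This part is routine once the bookkeeping of constants is arranged.

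For the jump part I would rerun Step~2 of the proof of Theorem~\ref{jump-bound2} at the fixed exponent $p=2$ in place of $p=\log(2/\alpha)$. Theorem~\ref{matrix-burkholder} bounds $\norm{{[J,J]}^n_1-[J,J]_1}_{L^2(\sch^\infty)}$ by a universal constant times $\norm{\lpa\sum_{i=1}^n Y_i^2\rpa^{1/2}}_{L^2(\sch^\infty)}$ with $Y_i=(\Delta^n_iJ)(\Delta^n_iJ)^\top-\Delta^n_i[J,J]$; Lemmas~\ref{lem:mat1} and~\ref{lem:mat2}, the jump-quadratic-variation bound \eqref{eq:jqv-bound}, and the AM--GM absorption already used in that proof then reduce everything to controlling $\norm{\max_{1\leq i\leq n}|\Delta^n_iJ|^2}_2$, which I would bound via $\norm{\max_i|\Delta^n_iJ|^2}_2\leq\bra{\sum_i\E\sbra{|\Delta^n_iJ|^4}}^{1/2}$ and Theorem~\ref{thm:rosenthal} applied to each increment of $J$ together with \eqref{eq:bdd-jump}. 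Alternatively, one can avoid Theorem~\ref{matrix-burkholder} at $p=2$ by martingale orthogonality in the Hilbert--Schmidt norm, $\E\sbra{\norm{\sum_iY_i}^2}\leq\E\sbra{\norm{\sum_iY_i}_F^2}=\sum_i\E\sbra{\norm{Y_i}_F^2}$, estimating each $\E\sbra{\norm{Y_i}_F^2}$ by the scalar Burkholder--Davis--Gundy inequality applied to the coordinates $J^j$ and \eqref{eq:bdd-jump}.

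The main obstacle is the one emphasized throughout the paper: because $J$ has infinite-activity jumps, $\max_i|\Delta^n_iJ|^2$ does not vanish as $n\to\infty$, so there is no concentration of ${[J,J]}^n_1$ around $[J,J]_1$ to exploit. What has to be shown is only that this quantity, and hence the whole jump contribution, stays of the order claimed in $L^2$ under \ref{ass:SH} alone; the Rosenthal estimate (or the coordinatewise BDG estimate) for the jump increments is the step that delivers this, while the remaining matrix inequalities and the integration of the continuous-part tail are bookkeeping.
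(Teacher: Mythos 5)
Your decomposition $Z=X+J$ via \eqref{xx-bound} and your handling of the continuous part $X$ match the paper: the paper applies \eqref{eq:basic-est} and \eqref{eq:bernstein} to $X$ and then converts the sub-exponential tail of $\|R\|$ to an $L^2$ bound over the net via Lemma~2.2.10 of \cite{VW1996}, which is morally the same as the tail integration you describe. Your \emph{alternative} route for the jump term is also precisely what the paper does: bound $\norm{{[J,J]}^n_1-[J,J]_1}$ by its Frobenius norm, note that the martingale difference blocks are orthogonal in $L^2$, and control each block's second moment with the It\^o isometry (the paper writes each entry ${[J^j,J^k]}^n_1-[J^j,J^k]_1$ via integration by parts and then uses the isometry and \eqref{eq:bdd-jump}; your coordinatewise BDG route is equivalent at $p=2$).

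However, your \emph{primary} route for the jump part is flawed and would not deliver the stated inequality. You assert that ``Theorem~\ref{matrix-burkholder} bounds $\norm{{[J,J]}^n_1-[J,J]_1}_{L^2(\sch^\infty)}$ by a universal constant times'' the square-function term, but this is not what Theorem~\ref{matrix-burkholder} says at $p=2$: the prefactor there is $Cp\bigl(\frac{\log d}{p-1}+1\bigr)$, which for $p=2$ is of order $\log d$, not a universal constant. Running Step~2 of the proof of Theorem~\ref{jump-bound2} verbatim at $p=2$ therefore reproduces the $\log^2(d/\alpha)$-type prefactor (the AM--GM absorption squares it), and the resulting bound on $\norm{{[J,J]}^n_1}_{L^2(\sch^\infty)}$ picks up a $\log^2 d$ factor that is incompatible with the dimension-free constant $C$ claimed in Lemma~\ref{lemma:l2}. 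The whole point of proving this $L^2$ lemma separately from Theorem~\ref{jump-bound2} is exactly to obtain a bound without the $\log$ factors, which is possible at $p=2$ only because the crude $\|A\|\leq\|A\|_F$ bound combined with martingale orthogonality sidesteps Theorem~\ref{matrix-burkholder} altogether. So you should promote your alternative (b) to the main argument and drop the $p=2$ application of Theorem~\ref{matrix-burkholder}.
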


\begin{proof}
In view of \eqref{xx-bound}, the desired result follows once we show that
\ben{\label{eq:l2-aim}
\norm{{[X,X]}^n_1}_{L^2(\sch^\infty)}\lesssim\Lambda\bra{1+\ol\gamma_2}\bra{1+\frac{d}{n}}
\quad\text{and}\quad
\norm{{[J,J]}^n_1}_{L^2(\sch^\infty)}
\lesssim\Lambda\bra{1+\ol\gamma_2\frac{d}{\sqrt n}}.
}
Applying \eqref{eq:basic-est} and \eqref{eq:bernstein} to $X$ and then using \eqref{eq:net-arg} and \cite[Lemma 2.2.10]{VW1996}, we obtain
\[
\norm{{[X,X]}^n_1-[X,X]_1}_{L^2(\sch^\infty)}\lesssim\Lambda\bra{1+\ol\gamma_2}\bra{\frac{d}{n}+\sqrt{\frac{d}{n}}}.
\]
This implies the first bound in \eqref{eq:l2-aim}. 
Next, for all $j,k\in\{1,\dots,d\}$, we have by integration by parts 
\ba{
{[J^j,J^k]}^n_1-[J^j,J^k]_1=\sum_{i=1}^n\bra{\int_{t_{i-1}}^{t_i}(J^j_{s-}-J^j_{t_{i-1}})dJ^k_s+\int_{t_{i-1}}^{t_i}(J^k_{s-}-J^k_{t_{i-1}})dJ^j_s}.
}
Using the It\^o isometry, \eqref{eq:bdd-jump} and Jensen's inequality repeatedly, we obtain
\ba{
\sum_{j,k=1}^d\E\sbra{\abs{\int_{t_{i-1}}^{t_i}(J^j_{s-}-J^j_{t_{i-1}})dJ^k_s}^2}
&=\sum_{j,k=1}^d\E\sbra{\int_{t_{i-1}}^{t_i}\int_E(J^j_{s-}-J^j_{t_{i-1}})^2\delta_k(s,z)^2\nu(ds,dz)}\\
&\leq\Lambda\sum_{j,k=1}^d\int_{t_{i-1}}^{t_i}\E\sbra{(J^j_{s-}-J^j_{t_{i-1}})^2}ds\int_E\gamma_k(z)^2\levy(dz)\\
&\leq\Lambda\ol\gamma_2d\sum_{j=1}^d\int_{t_{i-1}}^{t_i}\E\sbra{\int_{t_{i-1}}^{s}\int_E\delta_j(u,z)^2\nu(du,dz)}ds\\
&\leq\Lambda^2\ol\gamma_2^2\frac{d^2}{n^2}.
}
Consequently, we deduce
\ba{
\E\sbra{\norm{{[J,J]}^n_1-[J,J]_1}^2}
\leq\E\sbra{\norm{{[J,J]}^n_1-[J,J]_1}_F^2}
\leq2\Lambda^2\ol\gamma_2^2\frac{d^{2}}{n}.
}
This implies the second bound in \eqref{eq:l2-aim}.
\end{proof}

\begin{proof}[\bf Proof of Theorem \ref{localization}]
We divide the proof into four steps. 

\noindent\textbf{Step 1}. First, since the process $Z$ is not affected by the values $b_0$ and $\sigma_0$, we may assume $b_0=0$ and $\sigma_0=0$ and hence $h_0=0$ without loss of generality. Then, there is a localizing sequence $(T_m)_{m=1}^\infty$ such that, for each $m$, $h_t\leq m$ if $0\leq t\leq T_m$. 
Next, for any $m,n\in\mathbb N$, define the stopping time $R_{m,n}$ by $R_{m,n}:=\inf\{t\in[0,1]:\|[Z,Z]_t\|\geq m\Lambda_n\}\wedge1$ and set $S_{m,n}:=T_m\wedge R_{m,n}\wedge\tau_m$. 
We have by assumption
\[
\limsup_{m\to\infty}\limsup_{n\to\infty}P\bra{S_{m,n}<1}=0.
\]
Finally, we define the processes $b^{(m)}=(b^{(m)}_t)_{t\in[0,1]}$, $\sigma^{(m)}=(\sigma^{(m)}_t)_{t\in[0,1]}$ and function $\delta^{(m)}$ on $\Omega\times[0,1]\times E$ by
\ba{
b_t^{(m)}=b_{t\wedge S_{m,n}},\quad
\sigma_t^{(m)}=\sigma_{t\wedge S_{m,n}},\quad
\delta^{(m)}(t,z)=\delta(t\wedge S_{m,n},z)1_{\{|\delta(t\wedge S_{m,n},z)|^2\leq m\Lambda_n\}}.
}
Then we define the process $Z(m)=(Z(m)_t)_{t\in[0,1]}$ by
\ba{
Z(m)_t&=Z_0+\int_0^tb^{(m)}_sds+\int_0^t\sigma^{(m)}_sdW_s
+\kappa(\delta^{(m)})\star(\mu-\nu)_t
+\kappa'(\delta^{(m)})\star\mu_t.
}
Note that $\|[Z(m),Z(m)]_1\|\leq\|[Z(m),Z(m)]_{S_{m,n}-}\|+|\Delta Z(m)_{S_{m,n}}|^2\leq2m\Lambda_n$. Hence $Z(m)$ satisfies \ref{ass:SH} with $\Lambda=2m\Lambda_n$ and $\gamma_j=\gamma^{(n)}_{m,j}$. 

\noindent\textbf{Step 2}. We prove $Z_t=Z(m)_t$ for all $t<S_{m,n}$. First, $\int_0^tb_sds=\int_0^tb^{(m)}_sds$ for all $t<S_{m,n}$ by construction. 
Next, $\int_0^t\sigma_sdW_s=\int_0^t\sigma^{(m)}_sdW_s$ for all $t<S_{m,n}$ by Claim 4.38 in \cite[Chapter I]{JS}. 
Now, let us consider the representation \eqref{eq:prm} for $\mu$, and recall that we have $\Delta Z_s=\delta(s,\theta_s)1_{\mcl T}(s)$ for all $s$. 
Then, if $s<S_{m,n}$, we also have $s<R_{m,n}$ and thus $|\Delta Z_s|^2\leq\norm{[Z,Z]_s}\leq m\Lambda_n$, so $\delta(s,\theta_s)1_{\mcl T}(s)=\delta^{(m)}(s,\theta_s)1_{\mcl T}(s)$ for all $s<S_{m,n}$. This implies $\kappa'(\delta^{(m)})\star\mu_t=\kappa'(\delta)\star\mu_t$ for all $t<S_{m,n}$. 
Also, since $\kappa(\delta)1_{\{|\delta|^2>m\Lambda_n\}}\star(\mu-\nu)=\kappa(\delta)1_{\{|\delta|^2>m\Lambda_n\}}\star\mu-\kappa(\delta)1_{\{|\delta|^2>m\Lambda_n\}}\star\nu$ by Proposition 1.28 in \cite[Chapter II]{JS}, $\kappa(\delta)1_{\{|\delta|^2>m\Lambda_n\}}\star(\mu-\nu)_t=0$ for all $t<S_{m,n}$. Thus $\kappa(\delta)\star(\mu-\nu)_t=\kappa(\delta)1_{\{|\delta|^2\leq m\Lambda_n\}}\star(\mu-\nu)_t$ for all $t<S_{m,n}$. 
Since $\kappa(\delta(s,\theta_s))1_{\{|\delta(s,\theta_s)|^2\leq m\Lambda_n\}}1_{\mcl T}(s)=\kappa(\delta^{(m)}(s,\theta_s))1_{\mcl T}(s)$ for all $s\leq S_{m,n}$, two purely discontinuous local martingales $\kappa(\delta)1_{\{|\delta|^2\leq m\Lambda_n\}}\star(\mu-\nu)$ and $\kappa(\delta^{(m)})\star(\mu-\nu)$ have the same jumps when they are stopped at $S_{m,n}$. Hence they are almost surely equal on $[0,S_{m,n}]$ by Corollary 4.19 in \cite[Chapter I]{JS}. Consequently, $\kappa(\delta)\star(\mu-\nu)_t=\kappa(\delta^{(m)})\star(\mu-\nu)_t$ for all $t<S_{m,n}$. Overall, $Z_t=Z(m)_t$ for all $t<S_{m,n}$.

\noindent\textbf{Step 3}. Set
\ben{\label{def:an}
u_n:=\Lambda_n(\log^2d)\bra{\frac{d}{n}\log n+\log^2n},\qquad
v_n:=\Lambda_n\bra{1+\frac{d}{\sqrt n}}.
}
In this step, we prove
\ben{\label{localization-aim}
\|[Z,Z]^n_1\|=O_p(u_n\wedge v_n).
}
Fix $m\in\mathbb N$ and $\alpha\in(0,2/e^2)$. By Step 1 and Theorem \ref{jump-bound2}, there is a universal constant $C>0$ such that
\[
\norm{{[Z(m),Z(m)]}^n_1}
\leq Cm\Lambda_n\cbra{\frac{d+\log(1/\alpha)}{n}+\ol\gamma_{m,2}+\log^2(d/\alpha)\bra{\frac{d\log n}{n}\ol{\gamma}_{m,2}+\log^2n}}
\]
with probability $1-\alpha$ when $\alpha\geq2/n$. Hence
\[
\limsup_{K\to\infty}\limsup_{n\to\infty}P\bra{\|{[Z(m),Z(m)]}^n_1\|>Ku_n}\leq\alpha.
\]
Also, by Markov's inequality and Lemma \ref{lemma:l2},
\ba{
\limsup_{K\to\infty}\limsup_{n\to\infty}P\bra{\|{[Z(m),Z(m)]}^n_1\|>Kv_n}
&\leq\limsup_{K\to\infty}K^{-2}\limsup_{n\to\infty}(v_n^{-1}\|{[Z(m),Z(m)]}^n_1\|_{L^2(\sch^\infty)})^2\\
&=0.
}
Consequently,
\[
\limsup_{K\to\infty}\limsup_{n\to\infty}P\bra{\|{[Z(m),Z(m)]}^n_1\|>K(u_n\wedge v_n)}\leq\alpha.
\]
Moreover, noting that $Z_1=Z_{1-}$ a.s.~by Corollary 1.19 in \cite[Chapter II]{JS}, we have ${[Z,Z]}^n_1={[Z(m),Z(m)]}^n_1$ on the event $\{S_{m,n}=1\}$ by Step 2. Hence
\ba{
&\limsup_{K\to\infty}\limsup_{n\to\infty}P\bra{{[Z,Z]}^n_1>K(u_n\wedge v_n)}\\
&\leq \limsup_{K\to\infty}\limsup_{n\to\infty}P\bra{{[Z(m),Z(m)]}^n_1>K(u_n\wedge v_n)}+\limsup_{n\to\infty}P(S_{m,n}<1)\\
&\leq\alpha+\limsup_{n\to\infty}P(S_{m,n}<1).
}
Letting $m\to\infty$ and $\alpha\downarrow0$, we obtain \eqref{localization-aim}.

\noindent\textbf{Step 4}. When $d\leq\sqrt n$, $u_n\wedge v_n\leq\Lambda_n(1+d/\sqrt n)\leq2\Lambda_n$. Otherwise, we have $\log n\leq2\log d$, so
\[
u_n\wedge v_n=O\bra{\Lambda_n(\log^2d)\bra{\frac{d}{n}\log (d\wedge n)+\log^2(d\wedge n)}}.
\]
Hence the desired result follows from \eqref{localization-aim}.
\end{proof}

\subsection{Proof of Theorem \ref{matrix-burkholder}}\label{sec:mBDG}

Before starting the proof, we introduce some notation. 
For numbers $a_1,\dots,a_d$, $\diag(a_1,\dots,a_d)$ denotes the diagonal matrix with the diagonal entries $a_1,\dots,a_d$.  
For a $d\times d$ matrix $A$, the Schatten $p$-norm of $A$ is defined by 
\[
\|A\|_{\sch^p}=
\begin{cases}
\left(\sum_{j=1}^d\mf s_j(A)^p\right)^{1/p} & \text{if }p\in[1,\infty),\\
\mf s_1(A) & \text{if }p=\infty.
\end{cases}
\]
For a $d\times d$ random matrix $Y$ and $p\in[1,\infty),q\in[1,\infty]$, we set $\|Y\|_{L^p(\sch^q)}:=\|\|Y\|_{\sch^q}\|_p$. 
The following properties are well-known:
\begin{itemize}

\item For any $A\in\mathbb R^{d\times d}$,
\ben{\label{eq:trace}
|\trace(A)|\leq\|A\|_{\sch^1}.
}

\item For any $A,B\in\mathbb R^{d\times d}$ and any $p,q\in[1,\infty]$ such that $1/p+1/q=1$,
\ben{\label{eq:holder}
\|AB\|_{\sch^1}\leq\|A\|_{\sch^p}\|A\|_{\sch^q}.
}

\end{itemize}

The proof of Theorem \ref{matrix-burkholder} relies on two far-reaching results in the theory of non-commutative $L^p$-spaces. 
The first one is the so-called UMD property of non-commutative $L^p$-spaces. 
\begin{lemma}[\cite{Ra02}, Corollary 4.5]\label{lem:UMD}
For any $p\in(1,\infty)$, the Banach space $\mathbb R^{d\times d}$ equipped with the norm $\norm{\cdot}_{\sch^p}$ has the UMD property. More precisely, there is a universal constant $C>0$ such that
\[
\norm{\sum_{i=1}^n\epsilon_iY_i}_{L^p(\sch^p)}\leq C\frac{p}{1-1/p}\norm{\sum_{i=1}^nY_i}_{L^p(\sch^p)}
\]
for any $\mathbb R^{d\times d}$-valued martingale difference sequence $(Y_i)_{i=1}^n$ and any constants $\epsilon_1,\dots,\epsilon_n\in\{-1,1\}$. 
\end{lemma}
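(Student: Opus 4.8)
Lemma \ref{lem:UMD} is \cite[Corollary 4.5]{Ra02}, so in the write-up I would simply cite it; here is the mechanism behind it and how I would reconstruct the argument. Let $\mcl M$ denote the finite von Neumann algebra of (essentially) bounded $d\times d$ random matrices with trace $\tau(x)=\E\trace(x)$, so that $\norm{x}_{L^p(\sch^p)}=\tau(|x|^p)^{1/p}$, and let $\mcl E_i$ be the $\tau$-preserving conditional expectation onto the bounded $\mcl G_i$-measurable matrices. Writing $f=\sum_{i=1}^nY_i$ with $Y_i=df_i:=\mcl E_if-\mcl E_{i-1}f$, the asserted inequality is precisely the statement that the sign martingale transform $T_\epsilon\colon \sum_idf_i\mapsto\sum_i\epsilon_i\,df_i$ ($\epsilon_i\in\{-1,1\}$) is bounded on $L^p(\mcl M)$ with operator norm $\le C\tfrac{p}{1-1/p}$.

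I would begin with two reductions. Since $\tau(df_i(dg_j)^\top)=0$ for $i\ne j$ (martingale-difference orthogonality, obtained by conditioning on the earlier index) and the $\epsilon_i$ are real, $T_\epsilon$ is self-dual for the pairing $(x,y)\mapsto\E\trace(xy^\top)$: $\E\trace\!\big((T_\epsilon f)g^\top\big)=\sum_i\epsilon_i\,\E\trace(df_i\,dg_i^\top)=\E\trace\!\big(f\,(T_\epsilon g)^\top\big)$. Hence $\norm{T_\epsilon}_{L^p\to L^p}=\norm{T_\epsilon}_{L^{p'}\to L^{p'}}$, and because $\tfrac{p}{1-1/p}=\tfrac{p^2}{p-1}\asymp\max(p,p')$ is symmetric in $p\leftrightarrow p'$ up to an absolute factor, it suffices to treat $p\ge2$. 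The second, structural, observation is that $T_\epsilon$ fixes the non-commutative column and row square functions, $s_c(h):=\big(\sum_idh_i^\top\,dh_i\big)^{1/2}$ and $s_r(h):=\big(\sum_idh_i\,dh_i^\top\big)^{1/2}$, i.e.\ $s_c(T_\epsilon f)=s_c(f)$ and $s_r(T_\epsilon f)=s_r(f)$, while $T_\epsilon f$ is again an $L^p(\mcl M)$-martingale.

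For $p\ge2$, combining these with the non-commutative Burkholder--Gundy equivalence $\norm h_{L^p(\mcl M)}\asymp\max\{\norm{s_c(h)}_p,\norm{s_r(h)}_p\}$ (Pisier--Xu) already gives the qualitative bound $\norm{T_\epsilon f}_p\le C_p\max\{\norm{s_c(f)}_p,\norm{s_r(f)}_p\}\le C_p^2\norm f_p$; with the known $C_p\lesssim p$ this is of order $p^2$. To reach the stated linear order one must invoke Burkholder--Gundy on one side only, by duality: write $\norm{T_\epsilon f}_p=\sup\{\E\trace((T_\epsilon f)g^\top):\norm g_{p'}\le1\}$, expand $\E\trace((T_\epsilon f)g^\top)=\sum_i\epsilon_i\E\trace(df_i\,dg_i^\top)$, split each $dg_i=dg_i^{(c)}+dg_i^{(r)}$ along a near-optimal decomposition for the conjugate Hardy norm $\norm g_{\mf h_{p'}}:=\inf\{\norm{s_c(g^{(c)})}_{p'}+\norm{s_r(g^{(r)})}_{p'}\}$, and apply the matrix H\"older inequalities \eqref{eq:trace}--\eqref{eq:holder} in the column pattern to the first part and the row pattern to the second; this bounds $\E\trace((T_\epsilon f)g^\top)$ by $\max\{\norm{s_c(f)}_p,\norm{s_r(f)}_p\}\cdot\norm g_{\mf h_{p'}}$. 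Since $1<p'\le2$ one has $\norm g_{\mf h_{p'}}\lesssim\norm g_{p'}\le1$ (the cheap direction at the conjugate exponent), while the $f$-factor is controlled by $\norm f_p$ with the sharp-order Burkholder--Gundy constant; the precise bookkeeping of these two constants — which is the substance of \cite[Corollary 4.5]{Ra02} — gives $\norm{T_\epsilon f}_p\lesssim\max(p,p')\norm f_p$, hence the claim for all $1<p<\infty$ after the self-duality reduction.

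The only genuinely delicate point is the order of the constant. The self-duality reduction and the invariance of the square functions under the sign transform are immediate, and the qualitative UMD property follows at once from non-commutative Burkholder--Gundy; what needs care is organizing the dual pairing so that the lossy Burkholder--Gundy equivalence is used exactly once, on the exponent-$\ge2$ side, and balanced against only the cheap Hardy-space estimate at the conjugate exponent, so that the resulting constant is $\max(p,p')$ rather than its square. I would defer this bookkeeping to \cite{Ra02}.
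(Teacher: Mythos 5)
The paper offers no proof of Lemma~\ref{lem:UMD}: it is stated as a direct quotation of \cite[Corollary~4.5]{Ra02}, and the argument that follows (Corollary~\ref{coro:UMD} and the proof of Theorem~\ref{matrix-burkholder}) treats it as a black box. Your decision to cite the reference is therefore exactly the paper's treatment, and the only content to review is your heuristic reconstruction of the mechanism.

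Two comments on that reconstruction. First, your preliminary reductions are correct and clean: the self-adjointness of $T_\epsilon$ under the pairing $(x,y)\mapsto\E\trace(xy^\top)$ (via the martingale-difference orthogonality $\E\trace(df_i\,dg_j^\top)=0$ for $i\neq j$) is valid, and $\tfrac{p}{1-1/p}=\tfrac{p^2}{p-1}$ is in fact \emph{exactly} invariant under $p\leftrightarrow p'$, not merely up to an absolute factor, so the reduction to $p\geq2$ is clean. Second, your proposed route — duality pairing plus a one-sided use of the non-commutative Burkholder--Gundy/Hardy-space machinery — is a reasonable alternative, but it is most likely \emph{not} how \cite[Corollary~4.5]{Ra02} is actually proved: the main theorem of that paper is a weak type $(1,1)$ inequality for non-commutative martingale transforms (a Gundy-type decomposition argument), and the $L^p$ bound with the stated constant $p^2/(p-1)$ follows from it by Marcinkiewicz interpolation with the trivial $L^2$ bound on $(1,2]$ and then duality for $p>2$. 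This matches the shape of the constant (blow-up $\sim1/(p-1)$ near $1$, linear growth for large $p$) far more naturally than a Burkholder--Gundy argument.

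A genuine caveat on your sketch's constant bookkeeping: you assert that $\norm{g}_{\mf h_{p'}}\lesssim\norm{g}_{p'}$ for $1<p'\leq2$ is the ``cheap'' direction with an $O(1)$ constant. By duality, that inequality is equivalent to $\max\{\norm{s_c(f)}_p,\norm{s_r(f)}_p\}\lesssim\norm{f}_p$ at the conjugate exponent $p\geq2$, and both directions of non-commutative Burkholder--Gundy at exponents $\geq2$ are generically of order $p$. So your pairing, as written, appears to give $O(p^2)$ rather than $O(p)$ for $p\geq2$; there is no obvious way to make one of the two Burkholder--Gundy uses free without an independent ingredient such as the weak type $(1,1)$ estimate. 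You correctly flag this as the delicate point and defer to the reference, so this is a caveat rather than an error in the write-up, but the claim that the sharp order is reachable by this route alone should not be taken at face value.
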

We refer to \cite[Chapter 4]{HvNVW16} for a detailed exposition of the UMD property of Banach spaces. 
We will use Lemma \ref{lem:UMD} in the following form.
\begin{corollary}\label{coro:UMD}
Let $\epsilon=(\epsilon_i)_{i=1}^n$ be a sequence of i.i.d.~Rademacher variables. 
There is a universal constant $C>0$ such that
\[
\norm{\sum_{i=1}^n\epsilon_iY_i}_{L^q(\sch^p)}\leq C\bra{\frac{q}{p}+\frac{1-1/p}{1-1/q}}\frac{p}{1-1/p}\norm{\sum_{i=1}^nY_i}_{L^q(\sch^p)}
\]
for any $p,q\in(1,\infty)$ and any $\mathbb R^{d\times d}$-valued martingale difference sequence $(Y_i)_{i=1}^n$ independent of $\epsilon$.
\end{corollary}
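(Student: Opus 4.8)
The plan is to deduce the corollary from Lemma \ref{lem:UMD} in two steps: pass from random Rademacher signs to arbitrary deterministic signs (which is essentially free, by independence), and then raise the integrability exponent from $p$ to $q$ while keeping the Schatten index fixed at $p$.

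\emph{Deterministic signs.} Since $\epsilon=(\epsilon_i)_{i=1}^n$ is independent of $\mcl Y$, conditioning on $\epsilon$ preserves the martingale difference structure: for $P_\epsilon$-a.e.\ sign pattern $(\varepsilon_1,\dots,\varepsilon_n)\in\{-1,1\}^n$ the sequence $(\varepsilon_iY_i)_{i=1}^n$ is again an $\mathbb R^{d\times d}$-valued $(\mcl G_i)$-martingale difference sequence. Hence, once we establish $\norm{\sum_{i=1}^n\varepsilon_iY_i}_{L^q(\sch^p)}\leq K(p,q)\norm{\sum_{i=1}^nY_i}_{L^q(\sch^p)}$ for every deterministic pattern with a constant $K(p,q)$ not depending on it, taking $L^q$-norms in $\epsilon$ returns the same bound for the random $\epsilon$. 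So it suffices to bound the operator norm of the martingale transform $T\colon\sum_id_i\mapsto\sum_i\varepsilon_id_i$ on $L^q(\sch^p)$ uniformly in $\varepsilon\in\{-1,1\}^n$. For $q=p$ this is exactly Lemma \ref{lem:UMD}, and the prefactor $C\,\tfrac{p}{1-1/p}$ it yields agrees with the one claimed, since $\tfrac qp+\tfrac{1-1/p}{1-1/q}=2$ when $q=p$.

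\emph{Exponent change.} I would first reduce to $q\geq p$ by duality. Identifying $L^q(\Omega;\sch^p)^*$ with $L^{q'}(\Omega;\sch^{p'})$ via the trace pairing $\langle A,B\rangle=\E[\trace(AB^\top)]$ and using orthogonality of martingale differences under it, $T$ is self-adjoint, so $\norm{T}_{L^q(\sch^p)\to L^q(\sch^p)}=\norm{T}_{L^{q'}(\sch^{p'})\to L^{q'}(\sch^{p'})}$; since $q\leq p\iff q'\geq p'$ and the claimed prefactor $C\bra{\tfrac qp+\tfrac{1-1/p}{1-1/q}}\tfrac{p}{1-1/p}$ is invariant under $(q,p)\mapsto(q',p')$, it is enough to treat $q\geq p$. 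In that range $\tfrac{q'}{p'}\leq1\leq\tfrac qp$, so the target reduces to a bound of order $\tfrac qp\cdot\tfrac{p}{1-1/p}$. This follows from the $q=p$ endpoint by the classical self-improvement of martingale-transform bounds across $L$-exponents for the \emph{fixed} Banach space $X=\sch^p$: from $\norm{T}_{L^p(X)\to L^p(X)}\lesssim\tfrac{p}{1-1/p}$ one obtains $\norm{T}_{L^q(X)\to L^q(X)}\lesssim\tfrac qp\cdot\tfrac{p}{1-1/p}$ for $q>p$ via a stopping-time/good-$\lambda$ argument combined with Doob's maximal inequality — this uses only that $T$ commutes with stopping and the $L^p$ bound, and in particular introduces no dimension-dependent constant. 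Assembling the $q=p$ case, this extrapolation, and the duality reduction gives the stated prefactor, and undoing the conditioning on $\epsilon$ completes the proof.

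The main obstacle is the quantitative exponent change: that UMD is independent of the exponent is classical, but producing the prefactor in precisely the stated form $\tfrac qp+\tfrac{1-1/p}{1-1/q}$, rather than merely ``a constant depending on $p$ and $q$'', requires careful bookkeeping of the constants in the good-$\lambda$ and Doob inequalities. The independence reduction and the duality symmetrization are routine by comparison.
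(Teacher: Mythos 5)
Your proof is correct and follows essentially the same route as the paper. The paper's proof is a one-line citation of Lemma \ref{lem:UMD}, the UMD exponent-change theorem \cite[Theorem 4.2.7]{HvNVW16}, and Jensen's inequality (for averaging over $\epsilon$); your steps---conditioning on $\epsilon$ to reduce to deterministic signs, invoking Lemma \ref{lem:UMD} at $q=p$, then duality plus good-$\lambda$ extrapolation to move the $L$-exponent from $p$ to $q$ while keeping $\sch^p$ fixed---amount to sketching a proof of that cited theorem rather than quoting it.
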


\begin{proof}
The claim follows from Lemma \ref{lem:UMD}, \cite[Theorem 4.2.7]{HvNVW16} and Jensen's inequality. 
\end{proof}

\begin{rmk}[Relation to symmetrization inequalities]\label{rem:UMD}
In the setting of Corollary \ref{coro:UMD}, observe that $(\epsilon_iY_i)_{i=1}^n$ is also an $\mathbb R^{d\times d}$-valued martingale difference sequence with respect to an appropriate filtration. 
So the role of $Y_i$ and $\epsilon_iY_i$ can be switched in the above inequality. 
Moreover, since $\|A\|\leq\|A\|_{\sch^{\log d}}\leq e\|A\|$ for any $A\in\mathbb R^{d\times d}$, we have 
\[
\norm{\sum_{i=1}^nY_i}_{L^p(\sch^\infty)}\leq C'p\bra{1+\frac{\log d}{p-1}}\norm{\sum_{i=1}^n\epsilon_iY_i}_{L^p(\sch^\infty)},
\]
where $C'$ is a universal constant. 
Consequently, matrix-valued martingales satisfy a symmetrization inequality with respect to the spectral norm up to a constant depending on $d$ and $p$. 
\end{rmk}
Combining the symmetrization inequality in the above remark with the matrix Khintchine inequality (see e.g.~Theorem 4.1.1 in \cite{Tr15}), we can derive the bound of Theorem \ref{matrix-burkholder} with an additional multiplicative factor $\sqrt{p+\log d}$. 
To avoid getting such an extra factor, we will use a duality argument. 
A key result is the non-commutative Khintchine inequality for $p=1$ originally obtained in \cite{LPP91}. We will state a version given in \cite{HaMu07} with an explicit constant. 
To state this result, we introduce some additional notation. 
Given a sequence of $d\times d$ matrices $\mcl A=(A_i)_{i=1}^n$ and $p\in[1,\infty]$, define
\[
\norm{\mcl A}_{\mcl H^p_C}:=\norm{\lpa\sum_{i=1}^nA_i^\top A_i\rpa^{1/2}}_{\sch^p},\qquad
\norm{\mcl A}_{\mcl H^p_R}:=\norm{\lpa\sum_{i=1}^nA_i A_i^\top\rpa^{1/2}}_{\sch^p}
\]
and
\[
\norm{\mcl A}_{\mcl H^p}:=
\begin{cases}
\inf\left\{\norm{(B_i)_{i=1}^n}_{\mcl H^p_C}+\norm{(C_i)_{i=1}^n}_{\mcl H^p_R}:A_i=B_i+C_i,~B_i,C_i\in\mathbb R^{d\times d}~(i=1,\dots,n)\right\} & \text{if }p<2,\\
\norm{\mcl A}_{\mcl H^p_C}\vee\norm{\mcl A}_{\mcl H^p_R} & \text{if }p\geq2.
\end{cases}
\]

\begin{lemma}[\cite{HaMu07}, Theorem 2.11]\label{lem:khintchine}
Let $\epsilon_1,\dots,\epsilon_n$ be i.i.d.~Rademacher variables. Then
\[
\frac{1}{\sqrt 3}\norm{\mcl A}_{\mcl H^1}
\leq\norm{\sum_{i=1}^n\epsilon_iA_i}_{L^1(\sch^1)}
\leq \norm{\mcl A}_{\mcl H^1}
\]
for any sequence of $d\times d$ matrices $\mcl A=(A_i)_{i=1}^n$.
\end{lemma}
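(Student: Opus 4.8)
The plan is to prove the two inequalities separately; the right-hand (upper) estimate is elementary and I would dispose of it first, while the left-hand (lower) estimate is the substantive part.

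\textbf{Upper estimate, constant $1$.} Fix any decomposition $A_i=B_i+C_i$ with $B_i,C_i\in\mathbb R^{d\times d}$. By the triangle inequality in $L^1(\sch^1)$ it is enough to bound $\norm{\sum_i\epsilon_iB_i}_{L^1(\sch^1)}$ by $\norm{(B_i)}_{\mcl H^1_C}$ and $\norm{\sum_i\epsilon_iC_i}_{L^1(\sch^1)}$ by $\norm{(C_i)}_{\mcl H^1_R}$. For the first, put $T=\sum_i\epsilon_iB_i$ and write $\norm{T}_{\sch^1}=\trace\bra{(T^\top T)^{1/2}}$; the map $X\mapsto\trace(X^{1/2})$ is concave on the cone of positive semidefinite matrices, so Jensen's inequality over the Rademacher variables together with $\E[\epsilon_i\epsilon_j]=\delta_{ij}$ gives
\[
\E\,\trace\bra{(T^\top T)^{1/2}}\le\trace\bra{\left(\sum_iB_i^\top B_i\right)^{1/2}}=\norm{(B_i)}_{\mcl H^1_C}.
\]
Using instead $\norm{M}_{\sch^1}=\trace\bra{(MM^\top)^{1/2}}$ yields the companion bound for $(C_i)$. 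Taking the infimum over all decompositions gives $\norm{\sum_i\epsilon_iA_i}_{L^1(\sch^1)}\le\norm{\mcl A}_{\mcl H^1}$.

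\textbf{Lower estimate: reduction to a lifting problem.} For $\frac1{\sqrt3}\norm{\mcl A}_{\mcl H^1}\le\norm{\sum_i\epsilon_iA_i}_{L^1(\sch^1)}$ I would argue by duality, everything here being finite-dimensional. Since $\mcl H^1=\mcl H^1_C+\mcl H^1_R$, its dual under the pairing $\langle(A_i),(Z_i)\rangle=\sum_i\trace(A_iZ_i^\top)$ is the intersection $\mcl H^\infty:=\mcl H^\infty_C\cap\mcl H^\infty_R$, with norm $\norm{(Z_i)}_{\mcl H^\infty}=\norm{(\sum_iZ_i^\top Z_i)^{1/2}}_{\sch^\infty}\vee\norm{(\sum_iZ_iZ_i^\top)^{1/2}}_{\sch^\infty}$; this follows from $\sch^1$--$\sch^\infty$ Schatten duality applied to the column and row block matrices formed from $(Z_i)$, plus the identification of the dual of a sum of Banach spaces with the intersection of the duals. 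Also $L^1(\sch^1)$, viewed as matrix-valued functions on $\{-1,1\}^n$, has dual $L^\infty(\sch^\infty)$ under $\langle X,Y\rangle=\E\,\trace(XY^\top)$. Hence $\norm{\mcl A}_{\mcl H^1}=\sup\{|\langle(A_i),(Z_i)\rangle|:\norm{(Z_i)}_{\mcl H^\infty}\le1\}$, and the inequality reduces to the following \emph{lifting statement}: for every $(Z_i)$ with $\norm{(Z_i)}_{\mcl H^\infty}\le1$ there is a map $\epsilon\mapsto Y(\epsilon)$ from $\{-1,1\}^n$ to $\mathbb R^{d\times d}$ with $\norm{Y(\epsilon)}_{\sch^\infty}\le\sqrt3$ for every $\epsilon$ and $\E[\epsilon_iY(\epsilon)]=Z_i$ for all $i$. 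Indeed, given such a $Y$,
\[
|\langle(A_i),(Z_i)\rangle|=\left|\E\,\trace\left(\left(\sum_i\epsilon_iA_i\right)Y^\top\right)\right|\le\norm{\sum_i\epsilon_iA_i}_{L^1(\sch^1)}\norm{Y}_{L^\infty(\sch^\infty)}\le\sqrt3\,\norm{\sum_i\epsilon_iA_i}_{L^1(\sch^1)}
\]
by the trace H\"older inequality \eqref{eq:holder}, and taking the supremum over admissible $(Z_i)$ completes the argument.

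\textbf{The lifting: the main obstacle.} Producing the bounded lifting $Y$ is the heart of the matter and the step I expect to be hardest. The naive choice $Y=\sum_i\epsilon_iZ_i$ has the right Rademacher coefficients but its spectral norm can be of order $\sqrt n$, so a genuinely finer construction is needed. I would split $(Z_i)$ into a ``column'' and a ``row'' part according to which of the two constraints defining $\norm{(Z_i)}_{\mcl H^\infty}$ is binding, and lift the two parts separately via the operator-valued martingale Hardy-space machinery on the dyadic filtration of $\{-1,1\}^n$ (a Davis-type decomposition / good-$\lambda$ argument): sequences with controlled $\mcl H^\infty_C$-norm are precisely the ``column-$\mathrm{BMO}$'' functions, which lie within a universal constant of $L^\infty(\sch^\infty)$ functions carrying the same first-order coefficients, and symmetrically for rows. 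The real difficulty, and what I expect to be the obstacle, is to carry this through with a constant independent of both $n$ and $d$; to control and then optimize that constant I would bring in the non-commutative Khintchine inequality for exponents $p\ge2$ (accessible from the matrix Khintchine inequality, or from the $\mathrm{UMD}$ property of $\sch^p$ as in Lemma \ref{lem:UMD}) together with an optimization over $p$, and/or a Gaussian comparison. Absent the sharp value, I would first settle for the inequality with \emph{some} universal constant $C$: the $1<p\le2$ analogue (with constant $O(\sqrt{p/(p-1)})$) comes for free by combining the $p\ge2$ Khintchine inequality with $L^p$--$L^{p'}$ duality, but passing from $p>1$ to $p=1$ without letting the constant blow up is itself the deep step — essentially the Lust--Piquard--Pisier theorem \cite{LPP91} — and trimming the resulting $C$ down to $\sqrt3$ is then a separate optimization that I would carry out last.
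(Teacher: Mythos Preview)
The paper does not prove this lemma; it is quoted verbatim as Theorem 2.11 of \cite{HaMu07} and used as a black box. So there is no ``paper's own proof'' to compare against, and your task here is really to reproduce a known result from the non-commutative Khintchine literature.

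Your upper estimate is correct and complete: the concavity of $X\mapsto\trace(X^{1/2})$ plus Jensen is exactly the standard argument.

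For the lower estimate, your duality reduction to a bounded lifting problem is the right framework, and you have correctly identified both the target statement and the obstacle. But the proposal stops precisely where the content begins: you do not actually construct the lifting $Y$, and the sentence ``I would lift the two parts separately via the operator-valued martingale Hardy-space machinery\ldots'' is a gesture rather than a proof. The difficulty you flag---passing from $p>1$ to $p=1$ with a dimension-free constant---\emph{is} the Lust--Piquard--Pisier theorem \cite{LPP91}, and it is not obtained by the Davis-decomposition route you sketch; the original argument goes through complex interpolation and a clever use of the three-lines lemma, while the sharp constant $\sqrt3$ in \cite{HaMu07} requires a separate and rather delicate analysis (involving an explicit optimal embedding and a careful $2\times2$ block computation). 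Neither of these is captured by ``good-$\lambda$ plus optimization over $p$''. So as written, the proposal is an accurate map of the terrain but does not traverse it: the lower bound remains unproved.
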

An important feature of the above inequality is that the involved constants are dimension-free. 
This contrasts with the matrix Khintchine inequality with respect to the spectral norm. 

%

We additionally need a few elementary results for matrices which would be well-known at least for general non-commutative $L^p$-spaces. We directly prove them for the sake of completeness. 
\begin{lemma}[Duality formula]\label{lem:duality}
Let $Y$ be a $d\times d$ random matrix such that $\|Y\|_{L^p(\sch^\infty)}<\infty$ for some $p\in(1,\infty)$. Let $q\in(1,\infty)$ satisfy $1/p+1/q=1$. Then
\ben{\label{eq:duality}
\|Y\|_{L^p(\sch^\infty)}=\sup_{\|Z\|_{L^q(\sch^1)}\leq1}|\E[\trace(YZ)]|,
}
where the supremum is taken over all $d\times d$ random matrices $Z$ with $\|Z\|_{L^q(\sch^1)}\leq1$.
\end{lemma}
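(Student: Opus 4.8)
The plan is to prove the two inequalities in \eqref{eq:duality} separately. For the inequality ``$\geq$'' — i.e.\ that the supremum on the right-hand side is at most $\|Y\|_{L^p(\sch^\infty)}$ — I would fix an arbitrary $d\times d$ random matrix $Z$ with $\|Z\|_{L^q(\sch^1)}\leq1$ and chain the pointwise estimates \eqref{eq:trace} and \eqref{eq:holder} (the latter with exponents $\infty$ and $1$): for every $\omega$,
\[
|\trace(Y(\omega)Z(\omega))|\leq\|Y(\omega)Z(\omega)\|_{\sch^1}\leq\|Y(\omega)\|_{\sch^\infty}\|Z(\omega)\|_{\sch^1}.
\]
Taking expectations and applying Hölder's inequality for real random variables with exponents $p$ and $q$ gives $|\E[\trace(YZ)]|\leq\E[\|Y\|_{\sch^\infty}\|Z\|_{\sch^1}]\leq\|Y\|_{L^p(\sch^\infty)}\|Z\|_{L^q(\sch^1)}\leq\|Y\|_{L^p(\sch^\infty)}$, as required.

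For the reverse inequality I would exhibit an extremal $Z$. We may assume $0<\|Y\|_{L^p(\sch^\infty)}<\infty$, the case $\|Y\|_{L^p(\sch^\infty)}=0$ being trivial and the case $=\infty$ excluded by hypothesis. Since $A\mapsto\|A\|_{\sch^\infty}=\mf s_1(A)$ is continuous on $\mathbb R^{d\times d}$, the map $\omega\mapsto\mf s_1(Y(\omega))$ is measurable; moreover, by a measurable selection theorem (the map $(u,v,\omega)\mapsto u^\top Y(\omega)v$ is Carathéodory, so Kuratowski--Ryll-Nardzewski applies) there are measurable maps $\omega\mapsto u(\omega),v(\omega)\in\mathbb R^d$ with $|u(\omega)|=|v(\omega)|=1$ and $u(\omega)^\top Y(\omega)v(\omega)=\|Y(\omega)\|_{\sch^\infty}$, obtained by selecting left/right singular vectors associated with the largest singular value. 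Put
\[
Z(\omega):=\frac{\mf s_1(Y(\omega))^{p-1}}{\|Y\|_{L^p(\sch^\infty)}^{p-1}}\,v(\omega)u(\omega)^\top .
\]
Then $Z(\omega)$ has rank at most one, and since $|u(\omega)|=|v(\omega)|=1$ its single nonzero singular value is $\mf s_1(Y(\omega))^{p-1}/\|Y\|_{L^p(\sch^\infty)}^{p-1}$, whence $\|Z(\omega)\|_{\sch^1}$ equals this same quantity. Using $(p-1)q=p$,
\[
\|Z\|_{L^q(\sch^1)}^q=\frac{\E[\mf s_1(Y)^{(p-1)q}]}{\|Y\|_{L^p(\sch^\infty)}^{(p-1)q}}=\frac{\E[\mf s_1(Y)^p]}{\|Y\|_{L^p(\sch^\infty)}^p}=1,
\]
so $Z$ is admissible. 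On the other hand $\trace(Y(\omega)Z(\omega))=\|Z(\omega)\|_{\sch^1}\,u(\omega)^\top Y(\omega)v(\omega)=\mf s_1(Y(\omega))^p/\|Y\|_{L^p(\sch^\infty)}^{p-1}$, hence $\E[\trace(YZ)]=\E[\mf s_1(Y)^p]/\|Y\|_{L^p(\sch^\infty)}^{p-1}=\|Y\|_{L^p(\sch^\infty)}$. Combining this with the first part proves \eqref{eq:duality}.

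The only genuinely delicate point is the measurable selection of the singular vectors $u,v$; everything else is elementary linear algebra and Hölder's inequality. If one prefers to avoid invoking a selection theorem, an alternative is an approximation argument: for each $\eps>0$, partition $\Omega$ into finitely many measurable sets on which $Y$ is, up to $\eps$, constant, and build a simple (finitely valued) admissible $Z_\eps$ from deterministic near-maximizing singular vectors on each piece, so that $\E[\trace(YZ_\eps)]\geq\|Y\|_{L^p(\sch^\infty)}-\eps$; letting $\eps\downarrow0$ then gives ``$\leq$''. Either route works, but the selection route yields the extremizer directly.
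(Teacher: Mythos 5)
Your proof is correct and follows essentially the same route as the paper: the easy direction via \eqref{eq:trace}, \eqref{eq:holder}, and H\"older, and the sharp direction via an explicit extremizer built from a measurable SVD of $Y$. Your $Z(\omega)=c(\omega)\,v(\omega)u(\omega)^\top$ is the paper's $V^\top\diag\bigl((\E[\|Y\|^p])^{-1/q}\|Y\|^{p-1},0,\dots,0\bigr)U^\top$ written in rank-one form, and the only cosmetic difference is that you justify measurability of the singular vectors via Kuratowski--Ryll-Nardzewski while the paper cites a reference on measurable SVD.
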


\begin{proof}
By \eqref{eq:trace}--\eqref{eq:holder} and H\"older's inequality, the right hand side of \eqref{eq:duality} is bounded by $\|Y\|_{L^p(\sch^\infty)}$. 
To prove the opposite inequality, let $Y=UDV$ be a singular decomposition of $Y$ such that $U,V\in\mathbb R^{d\times d}$ are (random) orthogonal matrices and $D=\diag(\mf s_1(Y),\dots,\mf s_d(Y))$. Note that $U$ and $V$ can be taken to be measurable (cf.~\cite{Az74}). Define 
\[
Z:=V^\top\diag((\E[\|Y\|^p])^{-1/q}\|Y\|^{p-1},0,\dots,0)U^\top.
\]
Then we have
\ba{
\E[\trace(YZ)]=\E[\trace(DVZU)]
=(\E[\|Y\|^p])^{-1/q}\E[\|Y\|^{p}]
=\|Y\|_{L^p(\sch^\infty)}.
}
Also, since $\mf s_j(Z)=(\E[\|Y\|^p])^{-1/q}\|Y\|^{p-1}$ if $j=1$ and $\mf s_j(Z)=0$ otherwise, we obtain
\ba{
\|Z\|_{L^q(\sch^1)}^q=\E[\|Z\|_{\sch^1}^q]
=(\E[\|Y\|^p])^{-1}\E[\|Y\|^{q(p-1)}]
=1.
} 
Consequently, $\|Y\|_{L^p(\sch^\infty)}$ is bounded by the right hand side of \eqref{eq:duality}. 
\end{proof}

\begin{lemma}\label{lem:schwarz}
Let $\mcl A=(A_i)_{i=1}^n$ and $\mcl B=(B_i)_{i=1}^n$ be two sequences of $d\times d$ matrices. Also, let $p,q\in[1,\infty]$ satisfy $1/p+1/q=1$. Then, 
\ba{
\norm{\sum_{i=1}^nA_iB_i}_{\sch^1}
\leq\norm{\mcl A}_{\mcl H^p_R}\norm{\mcl B}_{\mcl H^q_C}.
}
\end{lemma}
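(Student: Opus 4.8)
The plan is to reuse the block-matrix device already employed in the proof of Lemma~\ref{lem:mat1}. Form the $d\times nd$ block-row matrix $A=\begin{pmatrix}A_1&\cdots&A_n\end{pmatrix}$ and the $nd\times d$ block-column matrix $B=\begin{pmatrix}B_1\\\vdots\\B_n\end{pmatrix}$, exactly as in \eqref{eq:expand}. Then one has the bookkeeping identities $\sum_{i=1}^nA_iB_i=AB$, $\sum_{i=1}^nA_iA_i^\top=AA^\top$ and $\sum_{i=1}^nB_i^\top B_i=B^\top B$, so the claimed inequality is nothing but
\[
\norm{AB}_{\sch^1}\leq\norm{(AA^\top)^{1/2}}_{\sch^p}\norm{(B^\top B)^{1/2}}_{\sch^q}.
\]

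Next I would note that for a rectangular matrix $A$ the eigenvalues of the $d\times d$ positive semidefinite matrix $(AA^\top)^{1/2}$ are precisely the singular values of $A$ (read off from a singular value decomposition of $A$), so $\norm{(AA^\top)^{1/2}}_{\sch^p}$ coincides with the Schatten-$p$ norm of $A$; similarly $\norm{(B^\top B)^{1/2}}_{\sch^q}$ is the Schatten-$q$ norm of $B$. The inequality thus reduces to H\"older's inequality for Schatten norms, $\norm{AB}_{\sch^1}\leq\norm{A}_{\sch^p}\norm{B}_{\sch^q}$, which is the rectangular analogue of \eqref{eq:holder}.

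The only point requiring a word of care is that \eqref{eq:holder} was recorded for square matrices, whereas $A$ and $B$ are rectangular. I would dispose of this by padding with zeros: embed $A$ into $\bar A=\begin{pmatrix}A\\0\end{pmatrix}\in\mathbb R^{nd\times nd}$ and $B$ into $\bar B=\begin{pmatrix}B&0\end{pmatrix}\in\mathbb R^{nd\times nd}$. Appending zero rows or columns changes neither the set of nonzero singular values nor any Schatten norm, and $\bar A\bar B=\begin{pmatrix}AB&0\\0&0\end{pmatrix}$, so $\norm{\bar A\bar B}_{\sch^1}=\norm{AB}_{\sch^1}$, $\norm{\bar A}_{\sch^p}=\norm{A}_{\sch^p}$ and $\norm{\bar B}_{\sch^q}=\norm{B}_{\sch^q}$. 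Applying \eqref{eq:holder} to the square matrices $\bar A$ and $\bar B$ then yields the assertion for every conjugate pair $p,q\in[1,\infty]$. There is no genuine obstacle here; the lemma is essentially the Cauchy--Schwarz/H\"older inequality for the Hilbert--Schmidt pairing transcribed to the block matrices.
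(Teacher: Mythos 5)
Your proof is correct and follows the same route as the paper: form the block-row matrix $A$ and block-column matrix $B$ as in \eqref{eq:expand}, rewrite $\sum_i A_iB_i=AB$, observe $\norm{A}_{\sch^p}=\norm{\mcl A}_{\mcl H^p_R}$ and $\norm{B}_{\sch^q}=\norm{\mcl B}_{\mcl H^q_C}$, and invoke H\"older's inequality for Schatten norms. The only difference is that you explicitly handle the passage from the square-matrix statement \eqref{eq:holder} to the rectangular $d\times nd$ and $nd\times d$ matrices by padding with zero blocks, whereas the paper applies \eqref{eq:holder} to the rectangular matrices directly without comment; your extra step is harmless and slightly more careful, but not a different proof.
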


\begin{proof}
Define the matrices $A\in\mathbb R^{d\times nd}$ and $B\in\mathbb R^{nd\times d}$ as in \eqref{eq:expand}. 
Then, by \eqref{eq:holder},
\ba{
\norm{\sum_{i=1}^nA_iB_i}_{\sch^1}
=\norm{AB}_{\sch^1}
\leq\norm{A}_{\sch^p}\norm{B}_{\sch^q}.
}
Since $AA^\top=\sum_{i=1}^nA_iA_i^\top$ and $B^\top B=\sum_{i=1}^nB_i^\top B_i$, we obtain
$\norm{A}_{\sch^p}=\norm{\mcl A}_{\mcl H^p_R}$ and $\norm{B}_{\sch^q}=\norm{\mcl B}_{\mcl H^q_C}$. This completes the proof. 
\end{proof}

\begin{corollary}\label{coro:schwarz}
Under the assumptions of Lemma \ref{lem:schwarz}, we have 
\[
\left|\trace\left(\sum_{i=1}^nA_iB_i\right)\right|
\leq\norm{\mcl A}_{\mcl H^p}\norm{\mcl B}_{\mcl H^q}.
\]
\end{corollary}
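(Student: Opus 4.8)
The plan is to reduce the claim to Lemma \ref{lem:schwarz}, combining it with the bound $|\trace(C)|\le\|C\|_{\sch^1}$ from \eqref{eq:trace} and with cyclicity of the trace, and then to dispose of the ``mixed'' norm $\norm{\cdot}_{\mcl H^p}$ (the case $p<2$) by splitting the relevant sequence along the decomposition defining that norm. First I would note that cyclicity gives $\trace\bra{\sum_i A_iB_i}=\sum_i\trace(A_iB_i)=\sum_i\trace(B_iA_i)=\trace\bra{\sum_i B_iA_i}$, so the inequality to be proved is symmetric under exchanging $(\mcl A,p)$ with $(\mcl B,q)$; hence I may assume without loss of generality that $p\le 2\le q$.

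If $p=q=2$, the result is immediate: by \eqref{eq:trace} and Lemma \ref{lem:schwarz}, $|\trace(\sum_i A_iB_i)|\le\norm{\mcl A}_{\mcl H^2_R}\norm{\mcl B}_{\mcl H^2_C}$, and for exponent $r\ge2$ one has $\norm{\cdot}_{\mcl H^r_R}\vee\norm{\cdot}_{\mcl H^r_C}=\norm{\cdot}_{\mcl H^r}$ by definition. Now suppose $p<2$, hence $q>2$. Fix an arbitrary decomposition $A_i=U_i+V_i$ with $U_i,V_i\in\mathbb R^{d\times d}$, and put $\mcl U=(U_i)_{i=1}^n$, $\mcl V=(V_i)_{i=1}^n$. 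Writing $\trace\bra{\sum_i A_iB_i}=\trace\bra{\sum_i U_iB_i}+\trace\bra{\sum_i V_iB_i}$, I would estimate the second term directly via \eqref{eq:trace} and Lemma \ref{lem:schwarz}, obtaining $|\trace(\sum_i V_iB_i)|\le\norm{\mcl V}_{\mcl H^p_R}\norm{\mcl B}_{\mcl H^q_C}$; for the first term, cyclicity turns $\sum_i U_iB_i$ into $\sum_i B_iU_i$, and Lemma \ref{lem:schwarz} applied with $\mcl B$ as the left sequence (exponent $q$) and $\mcl U$ as the right one (exponent $p$) --- admissible since $1/q+1/p=1$ --- gives $|\trace(\sum_i U_iB_i)|\le\norm{\mcl B}_{\mcl H^q_R}\norm{\mcl U}_{\mcl H^p_C}$.

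Adding the two bounds and using $q\ge2$ to replace $\norm{\mcl B}_{\mcl H^q_R}\vee\norm{\mcl B}_{\mcl H^q_C}$ by $\norm{\mcl B}_{\mcl H^q}$, I get
\[
\abs{\trace\bra{\sum_{i=1}^n A_iB_i}}\le\bra{\norm{\mcl U}_{\mcl H^p_C}+\norm{\mcl V}_{\mcl H^p_R}}\norm{\mcl B}_{\mcl H^q}.
\]
Taking the infimum over all decompositions $A_i=U_i+V_i$ --- which is precisely the definition of $\norm{\mcl A}_{\mcl H^p}$ when $p<2$ --- yields the assertion. I do not anticipate a real obstacle: the content is entirely bookkeeping, and the only steps needing attention are keeping the Schatten exponents in the two applications of Lemma \ref{lem:schwarz} Hölder-conjugate, matching the row/column norms $\mcl H^p_R,\mcl H^p_C$ with the two summands in the definition of $\norm{\cdot}_{\mcl H^p}$, and using trace cyclicity both to reduce to $p\le 2\le q$ and to rewrite $\sum_i U_iB_i$ in the order required by Lemma \ref{lem:schwarz}.
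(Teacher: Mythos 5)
Your proposal is correct and is essentially the paper's proof in mirror image: the paper reduces by cyclicity to $p>2>q$ and decomposes $\mcl B$ (the sequence carrying the small exponent), whereas you reduce to $p<2<q$ and decompose $\mcl A$. Both arguments apply Lemma \ref{lem:schwarz} twice (once after a cyclic flip), use $\max=\norm{\cdot}_{\mcl H^r}$ for the large exponent, and take the infimum defining $\norm{\cdot}_{\mcl H^r}$ for the small one.
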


\begin{proof}
When $p=q=2$, the claim immediately follows from Lemma \ref{lem:schwarz}. Moreover, since $\trace\left(\sum_{i=1}^nA_iB_i\right)=\trace\left(\sum_{i=1}^nB_iA_i\right)$, it suffices to consider the case $p>2>q$. 

Let $\mcl C=(C_i)_{i=1}^n$ and $\mcl D=(D_i)_{i=1}^n$ be two sequences of $d\times d$ matrices such that $B_i=C_i+D_i$ for $i=1,\dots,n$. Then we have by \eqref{eq:trace} and Lemma \ref{lem:schwarz}
\ba{
\left|\trace\left(\sum_{i=1}^nA_iC_i\right)\right|
\leq\norm{\sum_{i=1}^nA_iC_i}_{\sch^1}
\leq\norm{\mcl A}_{\mcl H^p_R}\norm{\mcl C}_{\mcl H^q_C}
}
and
\ba{
\left|\trace\left(\sum_{i=1}^nA_iD_i\right)\right|
=\left|\trace\left(\sum_{i=1}^nD_iA_i\right)\right|
\leq\norm{\sum_{i=1}^nD_iA_i}_{\sch^1}
\leq\norm{\mcl D}_{\mcl H^q_R}\norm{\mcl A}_{\mcl H^p_C}.
}
Hence we obtain
\ba{
\left|\trace\left(\sum_{i=1}^nA_iB_i\right)\right|
=\left|\trace\left(\sum_{i=1}^nA_iC_i\right)+\trace\left(\sum_{i=1}^nA_iD_i\right)\right|
\leq\norm{\mcl A}_{\mcl H^p}\lpa\norm{\mcl C}_{\mcl H^q_C}+\norm{\mcl D}_{\mcl H^q_R}\rpa.
}
Taking the infimum with respect to $\mcl C$ and $\mcl D$, we obtain the desired result. 
\end{proof}

\begin{proof}[\bf Proof of Theorem \ref{matrix-burkholder}]
If $\|Y_i\|_{L^p(\sch^\infty)}=\infty$ for some $i$, then the right hand side of \eqref{eq:BG} is infinite because $\|(\sum_{j=1}^nY_j^\top Y_j)^{1/2}\|\geq\norm{(Y_i^\top Y_i)^{1/2}}=\|Y_i\|$ by Weyl's inequality. Hence we may assume $\|Y_i\|_{L^p(\sch^\infty)}<\infty$ for all $i$. 
Then, in view of Lemma \ref{lem:duality}, it suffices to show that
\be{
\left|\E\left[\trace\left(\sum_{i=1}^nY_iZ\right)\right]\right|
\lesssim p\bra{\frac{\log d}{p-1}+1}\lpa\E\norm{\mcl Y}_{\mcl H^\infty}^p\rpa^{1/p}
}
for any $d\times d$ random matrix $Z$ such that $\|Z\|_{L^q(\sch^1)}\leq1$, where $q=p/(p-1)$ so that $1/p+1/q=1$. 
For $i=1,\dots,n$, set $Z_i:=\E[Z\mid\mcl G_i]-\E[Z\mid\mcl G_{i-1}]$. Since $\mcl Y$ is a martingale difference sequence with respect to $(\mcl G_j)$, we obtain
\ba{
\E[Y_iZ]=\E[Y_i\E[Z\mid\mcl G_i]]
=\E[Y_iZ_i]+\E[Y_i\E[Z\mid\mcl G_{i-1}]]
=\E[Y_iZ_i].
}
Therefore, 
\ba{
\left|\E\left[\trace\left(\sum_{i=1}^nY_iZ\right)\right]\right|
&=\left|\E\left[\trace\lpa\sum_{i=1}^nY_iZ_i\rpa\right]\right|\\
&\leq\E\left[\norm{\mcl Y}_{\mcl H^\infty}\norm{\mcl Z}_{\mcl H^1}\right]
\leq\lpa\E\left[\norm{\mcl Y}_{\mcl H^\infty}^p\right]\rpa^{1/p}
\lpa\E\left[\norm{\mcl Z}_{\mcl H^1}^q\right]\rpa^{1/q},
}
where $\mcl Z=(Z_i)_{i=1}^n$ and the last line follows from Corollary \ref{coro:schwarz} and H\"older's inequality. Thus, we complete the proof once we prove
\ben{\label{mat-b:aim}
\lpa\E\left[\norm{\mcl Z}_{\mcl H^1}^q\right]\rpa^{1/q}\lesssim p\bra{\frac{\log d}{p-1}+1}.
}
Applying Lemma \ref{lem:khintchine} conditional on $\mcl Z$, we obtain
\[
\norm{\mcl Z}_{\mcl H^1}\leq \sqrt 3\E\left[\norm{\sum_{i=1}^n\epsilon_iZ_i}_{\sch^1}\mid\mcl Z\right],
\]
where $\epsilon_1,\dots,\epsilon_n$ are i.i.d.~Rademacher variables independent of $\mcl Z$. 
This and Jensen's inequality imply that
\ben{\label{res:khintchine}
\lpa\E\left[\norm{\mcl Z}_{\mcl H^1}^q\right]\rpa^{1/q}
\leq\sqrt{3}\norm{\sum_{i=1}^n\epsilon_iZ_i}_{L^q(\sch^1)}.
}
Now, let $r=(\log d)/(\log d -1)$ so that $1/r+1/\log d=1$. By the $C_r$-inequality and H\"older's inequality, we have for any $A\in\mathbb R^{d\times d}$
\[
\norm{A}_{\sch^r}\leq\norm{A}_{\sch^1}\leq e\norm{A}_{\sch^r}.
\]
Moreover, note that $\mcl Z$ is a martingale difference sequence with respect to $(\mcl G_i)_{i=0}^n$ by construction. Combining these results with Corollary \ref{coro:UMD}, we obtain
\ba{
\norm{\sum_{i=1}^n\epsilon_iZ_i}_{L^q(\sch^1)}
&\leq e\norm{\sum_{i=1}^n\epsilon_iZ_i}_{L^q(\sch^r)}
\lesssim\bra{\frac{q}{r}+\frac{p}{\log d}}(r\log d)\norm{\sum_{i=1}^nZ_i}_{L^q(\sch^r)}\\
&\lesssim p\bra{\frac{\log d}{p-1}+1}\norm{\E[Z\mid\mcl G_n]-\E[Z\mid\mcl G_0]}_{L^q(\sch^1)}\\
&\leq 2p\bra{\frac{\log d}{p-1}+1}\|Z\|_{L^q(\sch^1)}
\leq 2p\bra{\frac{\log d}{p-1}+1},
}
where the last line follows from Jensen's inequality and $\|Z\|_{L^q(\sch^1)}\leq1$. Combining this with \eqref{res:khintchine} gives \eqref{mat-b:aim}.
where the last line follows from Jensen's inequality and $\|Z\|_{L^q(\sch^1)}\leq1$. Combining this with \eqref{res:khintchine} gives \eqref{mat-b:aim}.
\end{proof}

\subsection{Proof of Theorem \ref{thm:rosenthal}}\label{sec:rosenthal}

The proof is done by approximating the desired inequality by the following discrete-time analog: 
\begin{lemma}
\label{disc-rosenthal}
Let $(\xi_i)_{i=1}^n$ be an $\mathbb R^d$-valued martingale difference sequence with respect to a filtration $(\mcl{G}_i)_{i=0}^n$. Then there is a universal constant $C>0$ such that
\[
\norm{\abs{\sum_{i=1}^n\xi_i}}_p\leq C\bra{\sqrt p\norm{\sqrt{\sum_{i=1}^n\E[|\xi_i|^2\mid\mcl G_{i-1}]}}_p
+p\norm{\max_{i=1,\dots,n}|\xi_i|}_p}
\]
for all $p\geq2$. 
\end{lemma}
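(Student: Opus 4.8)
The plan is to obtain the inequality from a \emph{dimension-free} Bernstein bound for $\mathbb R^d$-valued martingales, and then to remove the boundedness by truncation. Write $M_k=\sum_{i\le k}\xi_i$, $b_n:=(\sum_{i=1}^n\E[|\xi_i|^2\mid\mcl G_{i-1}])^{1/2}$ and $m:=\max_{i\le n}|\xi_i|$; the target is $\norm{|M_n|}_p\lesssim\sqrt p\,\norm{b_n}_p+p\,\norm m_p$ for $p\ge2$. The naive reduction to the scalar Burkholder--Rosenthal inequality coordinate by coordinate is useless here because it contributes a factor blowing up with $d$, so one must argue intrinsically in $\mathbb R^d$, using the $2$-uniform smoothness of the Euclidean norm.

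First I would prove the bounded case: if $|\xi_i|\le K$ a.s.\ for all $i$ and $b_n\le B$ a.s.\ with $K,B$ deterministic, then $P(|M_n|\ge t)\le 2\exp(-t^2/(2(c_1B^2+c_2Kt)))$ for all $t>0$, with universal $c_1,c_2$. The engine is the function $\phi(x):=\cosh(\lambda|x|)$ on $\mathbb R^d$, whose Hessian obeys $\norm{\nabla^2\phi(x)}\le 2\lambda^2\cosh(\lambda|x|)$ \emph{uniformly, including at the origin} --- in contrast with naive second-order estimates for $x\mapsto|x+y|$. A second-order Taylor expansion of $y\mapsto\phi(M_{k-1}+y)$, the identity $\E[\xi_k\mid\mcl G_{k-1}]=0$, and $\cosh(\lambda|\zeta|)\le e^{\lambda K}\cosh(\lambda|M_{k-1}|)$ for $|\zeta|\le|M_{k-1}|+K$ yield $\E[\phi(M_k)\mid\mcl G_{k-1}]\le\phi(M_{k-1})\bra{1+\lambda^2e^{\lambda K}\E[|\xi_k|^2\mid\mcl G_{k-1}]}$; iterating gives $\E\cosh(\lambda|M_n|)\le\exp(\lambda^2e^{\lambda K}B^2)$, and a Chernoff bound over $\lambda\le 1/K$ produces the tail (alternatively, one may invoke Pinelis' exponential inequalities for martingales in $2$-smooth spaces). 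Feeding this tail into Lemma~\ref{lem:bo} and using the standard estimate $\norm X_p\lesssim(\sqrt p+Lp)\norm X_{\Psi_L}$, one gets $\norm{|M_n|}_p\lesssim B\sqrt p+Kp$ in the bounded case.

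To reach random $b_n,m$, fix $\lambda>0$ and split $\xi_i=\xi_i'+\xi_i''$ with $\xi_i':=\xi_i1_{\{|\xi_i|\le\lambda\}}-\E[\xi_i1_{\{|\xi_i|\le\lambda\}}\mid\mcl G_{i-1}]$, so that $(\xi_i')$ is a martingale difference sequence with $|\xi_i'|\le2\lambda$, and stop $\sum\xi_i'$ at $\sigma_\lambda:=\min\{k:\sum_{i\le k+1}\E[|\xi_i'|^2\mid\mcl G_{i-1}]>\lambda^2\}$. On $\{m\le\lambda\}\cap\{b_n\le\lambda\}$ one has $\sigma_\lambda\ge n$ and $|M_n|\le\labs\sum_{i\le n\wedge\sigma_\lambda}\xi_i'\rabs+b_n^2/\lambda\le\labs\sum_{i\le n\wedge\sigma_\lambda}\xi_i'\rabs+\lambda$, while the events $\{m>\lambda\}$ and $\{b_n>\lambda\}$ are treated directly; applying the bounded estimate to the stopped martingale then gives $P(|M_n|>t)\le 2\exp\bra{-(t-\lambda)^2/(2(c_1\lambda^2+2c_2\lambda t))}+P(m>\lambda)+P(b_n>\lambda)$, and integrating against $pt^{p-1}\,dt$ with $\lambda$ a suitable function of $t$ (equivalently, summing the bounded moment bound over a dyadic partition of the ranges of $b_n$ and $m$) yields the claim.

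The substantive obstacle is keeping every constant free of $d$: this rules out the $\varepsilon$-net argument used elsewhere in the paper (which would carry a factor $(\#\mcl N)^{1/p}$, exponential in $d$ for fixed $p$) and forces the $\cosh$-based computation above; it is also why one cannot route through the Burkholder--Davis--Gundy square-function inequality, which would only deliver the weaker exponent $p^{3/4}$ on $\norm{b_n}_p$, since compensating the realized square function into $b_n^2$ costs an extra $\sqrt p$ that square-rooting recovers only in part. The remaining difficulty is technical: because $m=\max_i|\xi_i|$ is not predictable one cannot stop at the first large increment, so one truncates and compensates, and must verify that the compensator $\sum_i\E[\xi_i1_{\{|\xi_i|>\lambda\}}\mid\mcl G_{i-1}]$ and the mass discarded by truncation are absorbed into $\norm{b_n}_p$ and $\norm m_p$ with the correct powers of $p$.
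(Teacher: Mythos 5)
The paper's own proof of this lemma is a one-line citation: it invokes Eq.~(4.2) of Pinelis (1994), after observing that a Hilbert space is $(2,1)$-smooth, and that equation is exactly the Burkholder--Rosenthal moment inequality for martingales in a $2$-smooth Banach space with the stated $\sqrt p$ and $p$ factors. You are therefore attempting something considerably more ambitious than the paper, namely to rederive that result.

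Your bounded case is sound and is, in effect, a self-contained derivation of Pinelis' exponential martingale inequality in $\mathbb R^d$. The Hessian estimate for $\phi(x)=\cosh(\lambda|x|)$ is correct (indeed $\|\nabla^2\phi(x)\|\le\lambda^2\cosh(\lambda|x|)$ without the factor $2$, since both eigenvalues $\lambda^2\cosh(\lambda r)$ and $\lambda\sinh(\lambda r)/r$ are $\le\lambda^2\cosh(\lambda r)$), the supermartingale $\cosh(\lambda|M_k|)\exp\bigl(-\tfrac12\lambda^2e^{\lambda K}\sum_{i\le k}\E[|\xi_i|^2\mid\mcl G_{i-1}]\bigr)$ closes the induction, and combining the resulting tail with Lemma~\ref{lem:bo} gives $\norm{|M_n|}_p\lesssim\sqrt p\,B+pK$ when $b_n\le B$ and $m\le K$ are deterministic constants.

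The genuine gap is in your reduction from random $b_n,m$ to this bounded case. Your plan is to integrate the bound $P(|M_n|>t)\le 2\exp\bigl(-(t-\lambda)^2/(2(c_1\lambda^2+2c_2\lambda t))\bigr)+P(m>\lambda)+P(b_n>\lambda)$ against $pt^{p-1}\,dt$ with a \emph{deterministic} $\lambda=\lambda(t)$, but no such choice can work. The change of variables in $p\int t^{p-1}P(m>\lambda(t))\,dt$ (respectively $P(b_n>\mu(t))$) forces $\lambda(t)$ and $\mu(t)$ to be \emph{linear} in $t$ in order to produce $\E[m^p]$ and $\E[b_n^p]$ rather than some other power; but with $\lambda,\mu\propto t$ the Bernstein exponent becomes independent of $t$, so the first term contributes a positive constant uniformly in $t$ and the integral $p\int_0^\infty t^{p-1}\cdot(\text{const})\,dt$ diverges. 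The dyadic alternative you mention in parentheses runs into a related obstruction: the truncated--compensated differences $\xi_i'$ and the stopping time $\sigma_\lambda$ depend on the truncation level $\lambda$, so the events $\{2^{j-1}<m\vee b_n\le 2^{j}\}$ do not decompose $M_n$ into a family of bounded martingales to which the deterministic moment bound can be applied level by level, and summing the unrestricted bound $\lesssim(2^j p)^p$ over $j$ likewise diverges. Passing from the bounded exponential inequality to the moment inequality with random $b_n,m$ is precisely the hard part of Burkholder--Rosenthal; it requires either iterating the moment estimate itself (as Pinelis does) or a good-$\lambda$/Lenglart-type domination argument calibrated to deliver $\sqrt p$ and $p$, neither of which is a simple integration. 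At this point it is cleaner to do exactly what the paper does and cite Pinelis~(1994), Eq.~(4.2), which you already note as an alternative for the tail bound but which in fact also resolves the step you left to the reader.
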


\begin{proof}
This follows from Eq.(4.2) of \cite{Pi94} since any Hilbert space is $(2,1)$-smooth. 
\end{proof}

\begin{proof}[\bf Proof of Theorem \ref{thm:rosenthal}]
We divide the proof into two steps. 

\noindent\textbf{Step 1}. 
First we prove the claim when both $M$ and $\langle M\rangle$ are bounded. 
Take $n\in\mathbb N$ arbitrarily and set $\xi^n_i:=M_{it/n}-M_{(i-1)t/n}$ for $i=1,\dots,n$. 
Since $M_t=\sum_{i=1}^n\xi^n_i$ and $(\xi^n_i)_{i=1}^n$ is an $\mathbb R^d$-valued martingale difference sequence with respect to $(\mathcal F_{it/n})_{i=0}^n$, we have by Lemma \ref{disc-rosenthal}
\[
\norm{\abs{M_t}}_p\lesssim\sqrt p\norm{\sqrt{\sum_{i=1}^n\E[|\xi^n_i|^2\mid\mcl F_{(i-1)t/n}]}}_p
+p\norm{\max_{i=1,\dots,n}|\xi^n_i|}_p.
\]
Set $\eta^n_i:=\langle M\rangle_{it/n}-\langle M\rangle_{(i-1)t/n}$ for $i=1,\dots,n$. Then we have
\ba{
\sum_{i=1}^n\E[|\xi^n_i|^2\mid\mcl F_{(i-1)t/n}]=\sum_{i=1}^n\E[\eta^n_i\mid\mcl F_{(i-1)t/n}].
}
Hence
\ba{
\norm{\sqrt{\sum_{i=1}^n\E[|\xi^n_i|^2\mid\mcl F_{(i-1)t/n}]}}_p
&\leq\norm{\sqrt{\sum_{i=1}^n\eta^n_i}}_p
+\norm{\sqrt{\abs{\sum_{i=1}^n\bra{\eta^n_i-\E[\eta^n_i\mid\mcl F_{(i-1)t/n}]}}}}_p\\
&=\norm{\langle M\rangle_t^{1/2}}_p+\norm{\sum_{i=1}^n\bra{\eta^n_i-\E[\eta^n_i\mid\mcl F_{(i-1)t/n}]}}_{p/2}^{1/2}.
}
Therefore, we obtain the desired result once we prove the following claims:
\ban{
&\limsup_{n\to\infty}\norm{\sum_{i=1}^n\bra{\eta^n_i-\E[\eta^n_i\mid\mcl F_{(i-1)t/n}]}}_{p/2}=0,
\label{rosenthal-aim1}\\
&\limsup_{n\to\infty}\norm{\max_{i=1,\dots,n}|\xi^n_i|}_p\leq\norm{\sup_{s\in(0,t]}|\Delta M_s|}_p.
\label{rosenthal-aim2}
}

\noindent\ul{Proof of \eqref{rosenthal-aim1}.} 
By the Burkholder--Davis--Gundy inequality,
\ben{\label{ros-aim1-bdg}
\norm{\sum_{i=1}^n\bra{\eta^n_i-\E[\eta^n_i\mid\mcl F_{(i-1)t/n}]}}_{p/2}
\leq C_p\norm{\sqrt{\sum_{i=1}^n\bra{\eta^n_i-\E[\eta^n_i\mid\mcl F_{(i-1)t/n}]}^2}}_{p/2},
}
where $C_p>0$ is a constant depending only on $p$. 
Noting that $\eta_i^n\geq0$ for all $i$, we have
\ba{
\norm{\sqrt{\sum_{i=1}^n\bra{\eta^n_i-\E[\eta^n_i\mid\mcl F_{(i-1)t/n}]}^2}}_{p/2}
\leq\norm{\sqrt{\sum_{i=1}^n\bra{\eta^n_i}^2}}_{p/2}
+\norm{\sqrt{\sum_{i=1}^n\E[\eta^n_i\mid\mcl F_{(i-1)t/n}]^2}}_{p/2}.
}
By Stein's inequality (cf.~Theorem 3.1 in \cite{AsMo93}), 
\ba{
\norm{\sqrt{\sum_{i=1}^n\E[\eta^n_i\mid\mcl F_{(i-1)t/n}]^2}}_{p/2}
\leq\norm{\sqrt{\sum_{i=1}^n\E[\eta^n_i\mid\mcl F_{(i-1)t/n}]^2}}_{p}
\leq C_p'\norm{\sqrt{\sum_{i=1}^n\bra{\eta^n_i}^2}}_{p},
}
where $C_p'$ is a constant depending only on $p$. Consequently, 
\ba{
\norm{\sum_{i=1}^n\bra{\eta^n_i-\E[\eta^n_i\mid\mcl F_{(i-1)t/n}]}}_{p/2}
\leq C_p(1+C_p')\norm{\sqrt{\sum_{i=1}^n\bra{\eta^n_i}^2}}_{p}.
}
Since
\[
\sum_{i=1}^n\bra{\eta^n_i}^2\leq\bra{\max_{i=1,\dots,n}\eta^n_i}\sum_{i=1}^n\eta^n_i
=\bra{\max_{i=1,\dots,n}\eta^n_i}\langle M\rangle_t,
\]
$\sum_{i=1}^n\bra{\eta^n_i}^2$ is bounded by some constant thanks to the boundedness assumption on $\langle M\rangle$. Moreover, since $\langle M\rangle$ is continuous by assumption, $\max_{i=1,\dots,n}\eta^n_i\to0$ as $n\to\infty$ a.s. 
All together, the bounded convergence theorem yields \eqref{rosenthal-aim1}.

\noindent\ul{Proof of \eqref{rosenthal-aim2}.} 
Define the process $M^n=(M^n_s)_{s\in[0,1]}$ by $M^n_s=M_{\lfloor(n/t)s\rfloor t/n}$ for $s\in[0,1]$. 
Observe that $\sup_{s\in(0,t]}|\Delta M^n_s|=\max_{i=1,\dots,n}|\xi^n_i|$. 
Also, by Proposition 6.37d in \cite[Chapter VI]{JS}, the sequence $M^n$ converges a.s.~to $M$ for the Skorohod topology as $n\to\infty$. Hence Lemma 2.5 in \cite[Chapter VI]{JS} yields
\[
\limsup_{n\to\infty}\max_{i=1,\dots,n}|\xi^n_i|\leq\sup_{s\in(0,t]}|\Delta M_s|\quad\text{a.s.}
\]
Since $\max_{i=1,\dots,n}|\xi^n_i|$ is bounded by some constant thanks to the boundedness assumption on $M$, we obtain \eqref{rosenthal-aim2} by the reverse Fatou lemma. 

\noindent\textbf{Step 2}. 
Next we drop the boundedness assumption on $M$ and $\langle M\rangle$ in Step 1 by a standard localization argument. 
For every $m\in\mathbb N$, define the stopping time $T_m$ by $T_m:=\inf\{s\in[0,1]:|M_s|+\langle M\rangle_s>m\}\wedge1$. 
If $0\leq s<T_m$, we have $|M_s|+\langle M\rangle_s\leq m$. Since $\langle M\rangle$ is continuous, this also means $\langle M\rangle_{T_m}\leq m$. 
In addition, recall that jumps of $M$ are bounded by assumption; hence there is a constant $c>0$ such that $\sup_{s\in(0,1]}|\Delta M_s|\leq c$. Thus, $|M_{T_m}|\leq m+c$. 
As a result, the stopped process $M^{T_m}=(M_{s\wedge T_m})_{s\in[0,1]}$ satisfies the boundedness assumption in Step 1. Hence we have
\ba{
\norm{M_{t\wedge T_m}}_p\lesssim\sqrt p\norm{\langle M^{T_m}\rangle_t^{1/2}}_p+p\norm{\sup_{s\in(0,t]}|\Delta M_s^{T_m}|}_p
\leq\sqrt p\norm{\langle M\rangle_t^{1/2}}_p+p\norm{\sup_{s\in(0,t]}|\Delta M_s|}_p.
}
Meanwhile, on the event $\{T_m<1\}$, we have $\sup_{s\in[0,1]}|M_s|+\langle M\rangle_1>m$. Hence $P(T_m<1)\to0$ as $m\to\infty$. This means $M_{t\wedge T_m}\to M_t$ as $m\to\infty$ a.s. Consequently, by Fatou's lemma,
\[
\norm{M_t}_p\leq\liminf_{m\to\infty}\norm{M_{t\wedge T_m}}_p
\lesssim\sqrt p\norm{\langle M\rangle_t^{1/2}}_p+p\norm{\sup_{s\in(0,t]}|\Delta M_s|}_p.
\]
This completes the proof. 
\end{proof}


\subsection{Proof of Lemma \ref{lemma:diag}}\label{sec:proof-diag}

For a semimartingale $X$ and a stopping time $\tau$, denote by $X^\tau$ the semimartingale obtained from $X$ stopped at $\tau$. Then, using Lemma 4.13c in \cite[Chapter I]{JS}, we can check $(X^\tau)^c=(X^c)^\tau$. 
Hence, we may use a localization argument for the proof. In particular, by the same localization procedure as in the proof of Theorem \ref{localization}, we may assume without loss of generality the following: There is a constant $K\geq1$ and Borel functions $\gamma^{(n)}_j:E\to[0,\infty)$ $(n\in\mathbb N; j=1,\dots,d)$ such that
\ben{\label{resid-sh}
|b_s(\omega)|_\infty\leq K\sqrt{\Lambda_n},\quad
\|\sigma_s(\omega)\|\leq K\sqrt{\Lambda_n},\quad
|\delta_j(\omega,s,z)|\wedge1\leq\gamma^{(n)}_j(z),\quad
\|[Z,Z]_1(\omega)\|\leq K^2\Lambda_n 
}
for all $n\in\mathbb N,\omega\in\Omega$, $s\in[0,1]$, $z\in E$ and $j=1,\dots,d$. Moreover, 
\ben{\label{resid-gamma}
\sup_{n\in\mathbb N}\max_{1\leq j\leq d}\int_E\gamma_{j}^{(n)}(z)^2\levy(dz)<\infty.
}
Then, Lemma \ref{lemma:bdd-jump} allows us to additionally assume that
\ben{\label{resid-bdd}
|\delta(\omega,s,z)|\leq K\sqrt\Lambda_n\qquad\text{and}\qquad
|\delta_j(\omega,s,z)|\leq K\sqrt\Lambda_n\gamma^{(n)}_j(z)
}
for all $n\in\mathbb N,\omega\in\Omega$, $s\in[0,1]$, $z\in E$ and $j=1,\dots,d$. 
Furthermore, by a localization procedure similar to the one used in the proof of \cite[Lemma 4.4.9]{JP2012}, we may also assume without loss of generality that there is a Borel function $\gamma_F:E\to[0,\infty)$ such that
\ben{\label{factor-sh}
|b_{s}^F(\omega)|\leq \sqrt K,\quad
\|\sigma_{s}^F(\omega)\|\leq \sqrt K,\quad
|\delta^F(\omega,s,z)|\leq\gamma^F(z)\leq \sqrt K,\quad
\int_E\gamma_F(z)^2\levy(dz)\leq K
}
for all $n\in\mathbb N,\omega\in\Omega$, $s\in[0,1]$ and $z\in E$, where we increase the value of $K$ if necessary.  
In addition, since we may focus only on sufficiently large $n$, we may assume, with $\beta^{j\cdot}$ denoteing the $j$-th row of $\beta$,
\ben{\label{beta-bound}
\max_{1\leq j\leq d}|\beta^{j\cdot}|\leq \sqrt{K\Lambda_n}
}
for all $n$ with increasing the value of $K$ if necessary (recall that $r$ is fixed).  

Next, under \eqref{resid-sh}--\eqref{factor-sh}, we can define the $d$-dimensional processes $J=(J_t)_{t\in[0,1]}$ and $\wt b=(\wt b_t)_{t\in[0,1]}$ by
\[
J_t=\delta^Y\star(\mu-\nu)_t\quad\text{with}\quad\delta^Y=\beta \delta^F+\delta
\]
and
\[
\wt b_t=\beta\bra{b^F_t+\int_{|\delta^F(t,z)|>1}\delta^F(t,z)\levy(dz)}+b_t+\int_{E}\kappa'(\delta(t,z))\levy(dz).
\]
Then we have $Y_t=Y_0+\wt D_t+M_t+J_t$, where
\ba{
\wt D_t=\int_0^t\wt b_sds\qquad\text{and}\qquad
M_t=\int_0^t\sigma^Y_sdW_s\quad\text{with}\quad\sigma^Y_s=\beta\sigma^F_s+\sigma_s.
}
In particular, we have $Y^c=M$. Now, let 
\[
R:=\max_{1\leq j\leq d}\abs{{[Y^j,Y^j]}^n_1-{[J^j,J^j]}^n_1-[M^j,M^j]_1}.
\] 
Then, for all $j=1,\dots,d$,
\be{
[M^j,M^j]_1-R
\leq{[Y^j,Y^j]}^n_1
\leq{[J^j,J^j]}^n_1+[M^j,M^j]_1+R,
}
where we used ${[J^j,J^j]}^n_1\geq0$ to deduce the first inequality. Therefore, the proof of the lemma is completed once we prove
\ban{
\max_{1\leq j\leq d}{[J^j,J^j]}^n_1&=O_p\bra{\Lambda_n\bra{\frac{\log d}{\log\log d}}^2},\label{JJ-bound}\\
R&=o_p(1).
\label{R-bound}
}

\noindent\ul{Proof of \eqref{JJ-bound}}. Set $\gamma^Y_j:=\gamma^F+\gamma_j^{(n)}$ for $j=1,\dots,d$. Define
\[
\ol{\gamma}^Y_2:=\max_{1\leq j\leq d}\int_E\gamma_{j}^{Y}(z)^2\levy(dz).
\]
We have $\ol{\gamma}^Y_2=O(1)$ as $n\to\infty$ by \eqref{resid-gamma} and \eqref{factor-sh}. 
Also, by \eqref{resid-bdd}--\eqref{beta-bound}, we have
\ben{\label{jy-bound}
|\delta^Y(s,z)|\leq 2K\sqrt{\Lambda_n}\quad
\text{and}\quad
|\delta^Y_j(s,z)|\leq K\sqrt{\Lambda_n}\gamma^Y_j(z)
}
for all $j=1,\dots,d$ and $(s,z)\in[0,1]\times E$, where $\delta^Y_j$ denotes the $j$-th component function of $\delta^Y$. 
Now, for each $j\in\{1,\dots,d\}$, integration by parts gives 
\ba{
{[J^j,J^j]}^n_1-[J^j,J^j]_1
=2\sum_{i=1}^n\int_{t_{i-1}}^{t_i}(J^j_{s-}-J^j_{t_{i-1}})dJ^j_s
=2\int_{0}^{1}\sum_{i=1}^n1_{(t_{i-1},t_i]}(s)(J^j_{s-}-J^j_{t_{i-1}})dJ^j_s.
}
Therefore, for any $p\geq2$, we have by Theorem 1 in \cite{ReTi03}
\ba{
&\norm{{[J^j,J^j]}^n_1-[J^j,J^j]_1}_p\\
&\lesssim\frac{p}{\log p}\bra{\norm{\sqrt{\sum_{i=1}^n\int_{t_{i-1}}^{t_i}\int_E(J^j_{s-}-J^j_{t_{i-1}})^2\delta^Y_j(s,z)^2\nu(ds,dz)}}_p
+\norm{\max_{1\leq i\leq n}\sup_{s\in(t_{i-1},t_i]}|(J^j_{s-}-J^j_{t_{i-1}})\Delta J^j_s|}_p}.
}
By \eqref{jy-bound}, we obtain
\ban{
&\norm{\sqrt{\sum_{i=1}^n\int_{t_{i-1}}^{t_i}\int_E(J^j_{s-}-J^j_{t_{i-1}})^2\delta^Y_j(s,z)^2\nu(ds,dz)}}_p
\notag\\
&\leq K\sqrt{\Lambda_n\ol\gamma^Y_2}\norm{\sqrt{\sum_{i=1}^n\int_{t_{i-1}}^{t_i}(J^j_{s-}-J^j_{t_{i-1}})^2ds}}_p
\leq K\sqrt{\Lambda_n\ol\gamma^Y_2\sum_{i=1}^n\int_{t_{i-1}}^{t_i}\norm{J^j_{s-}-J^j_{t_{i-1}}}_{p}^2ds},
\label{eq:jy-qv}
}
where the last inequality follows by the integral Minkowski inequality. 
Theorem 1 in \cite{ReTi03} yields
\ba{
\sup_{s\in[t_{i-1},t_i]}\norm{J^j_{s}-J^j_{t_{i-1}}}_{p}
&\lesssim\frac{p}{\log p}\bra{\norm{\sqrt{\int_{t_{i-1}}^{t_i}\int_E\delta^Y_j(s,z)^2\nu(ds,dz)}}_{p}
+\norm{\sup_{s\in[t_{i-1},t_i]}|\Delta J^j_s|}_{p}}.
}
Since $|\Delta J_s|\leq|\delta^Y(s,\Delta J_s)|$, we obtain by \eqref{jy-bound}
\ben{\label{jy-bound2}
\sup_{s\in[t_{i-1},t_i]}\norm{J^j_{s}-J^j_{t_{i-1}}}_{p}
\lesssim\frac{p}{\log p}K\sqrt{\Lambda_n}\bra{\sqrt{\frac{\ol\gamma^Y_2}{n}}+1}.
}
Consequently, 
\ben{\label{eq:jy-qv1}
\norm{\sqrt{\sum_{i=1}^n\int_{t_{i-1}}^{t_i}\int_E(J^j_{s-}-J^j_{t_{i-1}})^2\delta^Y_j(s,z)^2\nu(ds,dz)}}_p
\lesssim\frac{p}{\log p}K^2\Lambda_n\sqrt{\ol\gamma^Y_2}\bra{\sqrt{\frac{\ol\gamma^Y_2}{n}}+1}.
}
Meanwhile, we have by \eqref{jy-bound}
\ba{
&\norm{\max_{1\leq i\leq n}\sup_{s\in(t_{i-1},t_i]}|(J^j_{s-}-J^j_{t_{i-1}})\Delta J^j_s|}_p^p\\
&\leq\E\sbra{\sum_{i=1}^n\int_{t_{i-1}}^{t_i}\int_E|(J^j_{s-}-J^j_{t_{i-1}})\delta_k(s,z)|^p\mu(ds,dz)}\\
&=\E\sbra{\sum_{i=1}^n\int_{t_{i-1}}^{t_i}\int_E|(J^j_{s-}-J^j_{t_{i-1}})\delta_k(s,z)|^p\nu(ds,dz)}\\
&\leq K^p\Lambda_n^{p/2}\ol\gamma^Y_2\E\sbra{\sum_{i=1}^n\int_{t_{i-1}}^{t_i}|J^j_{s-}-J^j_{t_{i-1}}|^pds}
\leq K^p\Lambda_n^{p/2}\ol\gamma^Y_2\sup_{s\in(t_{i-1},t_i]}\|J^j_{s-}-J^j_{t_{i-1}}\|_p^p.
}
Hence, we obtain by \eqref{jy-bound2}
\ben{\label{eq:jy-qv2}
\norm{\max_{1\leq i\leq n}\sup_{s\in(t_{i-1},t_i]}|(J^j_{s-}-J^j_{t_{i-1}})\Delta J^j_s|}_p
\lesssim\frac{p}{\log p}K^2\Lambda_n\bra{\ol{\gamma}^Y_2}^{1/p}\bra{\sqrt{\frac{\ol\gamma^Y_2}{n}}+1}.
}
Combining \eqref{eq:jy-qv}, \eqref{eq:jy-qv1} and \eqref{eq:jy-qv2}, we obtain
\ba{
\norm{{[J^j,J^j]}^n_1-[J^j,J^j]_1}_p
\lesssim\bra{\frac{p}{\log p}}^2K^2\Lambda_n\bra{\sqrt{\ol\gamma^Y_2}+\bra{\ol{\gamma}^Y_2}^{1/p}}\bra{\sqrt{\frac{\ol\gamma^Y_2}{n}}+1}.
}
Applying this inequality with $p=2\vee\log d$, we obtain
\ba{
\E\sbra{\max_{1\leq j\leq d}\abs{{[J^j,J^j]}^n_1-[J^j,J^j]_1}}
&\leq\bra{\E\sbra{\sum_{j=1}^d\abs{{[J^j,J^j]}^n_1-[J^j,J^j]_1}^p}}^{1/p}\\
&\leq d^{1/p}\max_{1\leq j\leq d}\norm{{[J^j,J^j]}^n_1-[J^j,J^j]_1}_p
=O\bra{\Lambda_n\bra{\frac{\log d}{\log\log d}}^2}.
}
This implies \eqref{JJ-bound}.

\noindent\ul{Proof of \eqref{R-bound}}. Define $\Sigma^Y_s:=\sigma^Y_s(\sigma^Y_s)^\top$. 
By \eqref{resid-sh}--\eqref{beta-bound}, we obtain
\ben{\label{cy-bound}
|\wt b_t|_\infty\leq K\sqrt{\Lambda_n}(2+\ol\gamma^Y_2)
\quad\text{and}\quad
\|\Sigma^Y_s\|=\|\sigma^Y_s\|^2\leq4K^2\Lambda_n.
}
Next, define $X:=\wt D+M$ and consider the following decomposition for $j=1,\dots,d$:
\ba{
{[Y^j,Y^j]}^n_1-{[J^j,J^j]}^n_1-[M^j,M^j]_1
=\bra{{[X^j,X^j]}^n_1-[M^j,M^j]_1}
+2{[\wt D^j,J^j]}^n_1+2{[M^j,J^j]}^n_1.
}
A similar argument to the proof of \eqref{eq:bernstein} implies that there are constants $c_1,c_2>0$ independent of $n$ such that
\[
P\bra{\abs{{[X^j,X^j]}^n_1-[M^j,M^j]_1}\geq u}\leq2\exp\bra{-\frac{u^2}{2(c_1\Lambda_n^2/n+(c_2\Lambda_n/n) u)}}
\]
for all $u\geq0$. 
This and \cite[Lemma 2.2.10]{VW1996} yield
\[
\max_{1\leq j\leq d}\abs{{[X^j,X^j]}^n_1-[M^j,M^j]_1}=O_p\bra{\Lambda_n\bra{\sqrt{\frac{\log d}{n}}+\frac{\log d}{n}}}=o_p(1),
\]
where the last equality follows by \eqref{ass:d-n}. 
Also, we obtain by the Schwarz inequality, \eqref{JJ-bound} and \eqref{cy-bound}
\ba{
\max_{1\leq j\leq d}\abs{{[\wt D^j,J^j]}^n_1}
\leq\sqrt{\max_{1\leq j\leq d}\sum_{i=1}^n\bra{\Delta^n_i\wt D^j}^2}
\sqrt{\max_{1\leq j\leq d}\sum_{i=1}^n\bra{\Delta^n_iJ^j}^2}
=O_p\bra{\Lambda_n\frac{\log d}{\sqrt n\log\log d}}=o_p(1).
}
Therefore, we complete the proof of \eqref{R-bound} once we prove
\ben{\label{MJ-bound}
\max_{1\leq j\leq d}\abs{{[M^j,J^j]}^n_1}
=o_p(1).
}
%

Let $p:=2\vee\log d$. For any $j=1,\dots,d$, Rosenthal's inequality with sharp constant gives (see e.g.~\cite[Eq.(4.3)]{Pi94})
\ben{\label{eq:mj1}
\norm{{[M^j,J^j]}^n_1}_p
\lesssim\frac{p}{\log p}\bra{\norm{\sqrt{\sum_{i=1}^n\E[(\Delta^n_iM^j\Delta^n_iJ^j)^2\mid\mcl F_{t_{i-1}}]}}_p
+\norm{\max_{1\leq i\leq n}\abs{\Delta^n_iM^j\Delta^n_iJ^j}}_p}.
}
Integration by parts yields
\ba{
\Delta^n_iM^j\Delta^n_iJ^j
=\int_{t_{i-1}}^{t_i}(M^j_{s}-M^j_{t_{i-1}})dJ^j_s+\int_{t_{i-1}}^{t_i}(J^j_{s-}-J^j_{t_{i-1}})dM^j_s.
}
Therefore, by \eqref{jy-bound} and \eqref{cy-bound}
\ban{
&\E[(\Delta^n_iM^j\Delta^n_iJ^j)^2\mid\mcl F_{t_{i-1}}]
\notag\\
&\leq2\E\sbra{\int_{t_{i-1}}^{t_i}\int_E(M^j_{s}-M^j_{t_{i-1}})^2\delta^Y_j(s,z)^2\nu(ds,dz)\mid\mcl F_{t_{i-1}}}+2\E\sbra{\int_{t_{i-1}}^{t_i}(J^j_{s-}-J^j_{t_{i-1}})^2(\Sigma_s^Y)^{jj}ds\mid\mcl F_{t_{i-1}}}
\notag\\
&\leq2K^2\Lambda_n\ol\gamma^Y_2\E\sbra{\int_{t_{i-1}}^{t_i}(M^j_{s}-M^j_{t_{i-1}})^2ds\mid\mcl F_{t_{i-1}}}+8K^2\Lambda_n\E\sbra{\int_{t_{i-1}}^{t_i}(J^k_{s-}-J^k_{t_{i-1}})^2ds\mid\mcl F_{t_{i-1}}}
\notag\\
&=2K^2\Lambda_n\ol\gamma^Y_2\E\sbra{\int_{t_{i-1}}^{t_i}\bra{\int_{t_{i-1}}^s(\Sigma_u^Y)^{jj}du}ds\mid\mcl F_{t_{i-1}}}
+8K^2\Lambda_n\E\sbra{\int_{t_{i-1}}^{t_i}\bra{\int_{t_{i-1}}^s\int_E\delta^Y_k(u,z)^2\nu(du,dz)}ds\mid\mcl F_{t_{i-1}}}
\notag\\
&\leq\frac{16K^4\Lambda_n^2\ol\gamma^Y_2}{n^2}.\label{eq:mj2}
}
Next, the Schwarz inequality gives
\ben{\label{eq:mj3}
\norm{\max_{1\leq i\leq n}\abs{\Delta^n_iM^j\Delta^n_iJ^j}}_p
\leq\norm{\max_{1\leq i\leq n}\abs{\Delta^n_iM^j}}_{2p}\norm{\max_{1\leq i\leq n}\abs{\Delta^n_iJ^j}}_{2p}.
}
Set $q:=2p\vee\log n$. Then we have
\ban{
\norm{\max_{1\leq i\leq n}\abs{\Delta^n_iM^j}}_{2p}
&\leq\norm{\max_{1\leq i\leq n}\abs{\Delta^n_iM^j}}_{q}
\leq\bra{\sum_{i=1}^n\E\sbra{\abs{\Delta^n_iM^j}^{q}}}^{1/q}
\notag\\
&\leq \sqrt e\max_{1\leq i\leq n}\norm{\Delta^n_iM^j}_{q}
\leq4K\sqrt{e\frac{q\Lambda_n}{n}},
\label{eq:mj4}
}
where the third inequality follows from $q\geq\log n$ and the fourth from Lemma \ref{sharp-BDG} and \eqref{cy-bound}. 
Meanwhile, Theorem 1in \cite{ReTi03} gives
\ba{
\norm{\Delta^n_iJ^j}_{2p}
\lesssim\frac{p}{\log p}\bra{\norm{\sqrt{\int_{t_{i-1}}^{t_i}\int_E\delta^Y_j(s,z)^2\nu(ds,dz)}}_{2p}
+\norm{\sup_{s\in(t_{i-1},t_i]}|\Delta J^j_s|}_{2p}}.
}
By \eqref{jy-bound}, we obtain
\ba{
\sum_{i=1}^n\norm{\sqrt{\int_{t_{i-1}}^{t_i}\int_E\delta^Y_j(s,z)^2\nu(ds,dz)}}_{2p}^{2p}
=\sum_{i=1}^n\E\sbra{\bra{\int_{t_{i-1}}^{t_i}\int_E\delta^Y_j(s,z)^2\nu(ds,dz)}^p}
\leq\frac{\bra{K^2\Lambda_n\ol\gamma^Y_2}^p}{n^{p-1}}
}
and
\ba{
\sum_{i=1}^n\norm{\sup_{s\in[t_{i-1},t_i]}|\Delta J^j_s|}_{2p}^{2p}
&\leq\sum_{i=1}^n\E\sbra{\int_{t_{i-1}}^{t_i}\int_E|\delta^Y_j(s,z)|^{2p}\mu(ds,dz)}\\
&=\E\sbra{\int_{0}^{1}\int_E|\delta^Y_j(s,z)|^{2p}\nu(ds,dz)}
\leq(4K^2\Lambda_n)^{p}\ol\gamma^Y_2.
}
Consequently,
\ben{\label{eq:mj5}
\norm{\max_{1\leq i\leq n}\abs{\Delta^n_iJ^j}}_{2p}
\leq\bra{\sum_{i=1}^n\E\sbra{\abs{\Delta^n_iJ^j}^{2p}}}^{1/(2p)}
\lesssim\bra{\frac{p}{\log p}K^2\sqrt\Lambda_n\bra{\sqrt{\ol\gamma^Y_2}+(\ol\gamma^Y_2)^{1/(2p)}}}.
}
By \eqref{eq:mj1}--\eqref{eq:mj5}, we conclude
\[
\max_{1\leq j\leq d}\norm{{[M^j,J^j]}^n_1}_p
=O\bra{\frac{p}{\log p}\bra{\frac{\Lambda_n}{\sqrt n}+\frac{p}{\log p}\Lambda_n\sqrt{\frac{q}{n}}}}
=o(1),
\]
where the last equality follows by \eqref{ass:d-n}. 
Since $d^p\leq e$, this inequality and Markov's inequality give \eqref{MJ-bound}.
\qed

\section{Additional simulation results}\label{sec:a-sim}

Tables \ref{table:bm-wiener}--\ref{table:bm-alpha=0.75} report the simulation results when the factor process $F$ is an $r$-dimensional Wiener process while other settings are maintained from Section \ref{sec:simulation}. 
Figures \ref{fig:perturb} and \ref{fig:gamma} depict sensitivity of the estimator $\hat r^{P,cor}_\tau(\gamma)$ to the tuning parameters $g(d)$ and $\gamma$. 
Figure \ref{fig:snr} depicts the effect of the signal-to-noise ratio parameter $\theta$. 
Figure \ref{fig:phi} depicts the effect of the idiosyncratic cross-sectional correlation parameter $\phi$.

\begin{table}[p]
\caption{Simulation results when factors are Wiener processes and $L$ is a Wiener process}
\label{table:bm-wiener}
\centering
\small
\begin{tabular}{rrrrrrrrrrr}
  \hline
  $d$ & \multicolumn{2}{c}{$\hat r^{BN}_\tau$} & 
\multicolumn{2}{c}{$\hat r^{P,cor}_\tau(\gamma)$} & 
\multicolumn{2}{c}{$PC_{p1}$} & \multicolumn{2}{c}{\citet{Pe19}} & 
\multicolumn{2}{c}{\citet{On10}}\\ \hline & \multicolumn{10}{c}{$n=26$}\\100 & 20.00 & (0.00) & 5.55 & (0.47) & 20.00 & (0.00) & 11.18 & (0.20) & 2.50 & (0.09) \\ 
  500 & 20.00 & (0.00) & 5.79 & (0.79) & 20.00 & (0.00) & 5.70 & (0.73) & 4.72 & (0.53) \\ 
  1000 & 20.00 & (0.00) & 5.89 & (0.89) & 20.00 & (0.00) & 5.69 & (0.74) & 5.30 & (0.70) \\ 
  1500 & 20.00 & (0.00) & 5.89 & (0.89) & 20.00 & (0.00) & 5.59 & (0.66) & 5.52 & (0.74) \\ 
   & \multicolumn{10}{c}{$n=78$}\\100 & 7.21 & (0.14) & 5.91 & (0.80) & 8.11 & (0.00) & 5.87 & (0.77) & 4.87 & (0.59) \\ 
  500 & 6.39 & (0.63) & 6.00 & (1.00) & 6.32 & (0.69) & 6.01 & (0.98) & 6.04 & (0.96) \\ 
  1000 & 6.32 & (0.69) & 6.00 & (1.00) & 6.02 & (0.98) & 6.00 & (1.00) & 6.03 & (0.97) \\ 
  1500 & 6.34 & (0.67) & 6.00 & (1.00) & 6.00 & (1.00) & 6.00 & (1.00) & 6.02 & (0.98) \\ 
   & \multicolumn{10}{c}{$n=390$}\\100 & 6.33 & (0.66) & 6.13 & (0.80) & 8.01 & (0.02) & 5.96 & (0.80) & 6.33 & (0.53) \\ 
  500 & 6.01 & (0.99) & 6.02 & (0.98) & 7.14 & (0.20) & 6.05 & (0.95) & 6.89 & (0.42) \\ 
  1000 & 6.00 & (1.00) & 6.01 & (0.99) & 6.53 & (0.53) & 6.45 & (0.61) & 6.90 & (0.39) \\ 
  1500 & 6.00 & (1.00) & 6.00 & (1.00) & 6.15 & (0.85) & 6.28 & (0.73) & 6.76 & (0.41) \\ 
   \hline
\end{tabular}\vspace{2mm}

\parbox{13cm}{\small
\textit{Note}. For each estimator $\hat r$, this table reports the empirical mean of $\hat r$ and the empirical probability that $\hat r=6$ (in parentheses) based on 1,000 Monte Carlo iterations. 
} 
\end{table}

\begin{table}[p]
\small
\caption{Simulation results when factors are Wiener processes and $L$ is a NTS process with $\alpha=0.25$}
\label{table:bm-alpha=0.25}
\centering
\begin{tabular}{rrrrrrrrrrr}
  \hline
  $d$ & \multicolumn{2}{c}{$\hat r^{BN}_\tau$} & 
\multicolumn{2}{c}{$\hat r^{P,cor}_\tau(\gamma)$} & 
\multicolumn{2}{c}{$PC_{p1}$} & \multicolumn{2}{c}{\citet{Pe19}} & 
\multicolumn{2}{c}{\citet{On10}}\\ \hline & \multicolumn{10}{c}{$n=26$}\\100 & 20.00 & (0.00) & 5.84 & (0.56) & 20.00 & (0.00) & 14.96 & (0.06) & 2.92 & (0.17) \\ 
  500 & 20.00 & (0.00) & 5.95 & (0.89) & 20.00 & (0.00) & 5.97 & (0.87) & 5.17 & (0.67) \\ 
  1000 & 20.00 & (0.00) & 5.98 & (0.94) & 20.00 & (0.00) & 5.94 & (0.92) & 5.68 & (0.83) \\ 
  1500 & 20.00 & (0.00) & 5.97 & (0.96) & 20.00 & (0.00) & 5.91 & (0.91) & 5.83 & (0.88) \\ 
   & \multicolumn{10}{c}{$n=78$}\\100 & 11.61 & (0.00) & 6.01 & (0.80) & 15.00 & (0.00) & 6.29 & (0.68) & 4.91 & (0.56) \\ 
  500 & 9.63 & (0.01) & 6.01 & (0.99) & 9.28 & (0.01) & 6.12 & (0.89) & 6.08 & (0.93) \\ 
  1000 & 9.72 & (0.00) & 6.00 & (1.00) & 6.72 & (0.46) & 6.03 & (0.97) & 6.06 & (0.95) \\ 
  1500 & 10.47 & (0.00) & 6.00 & (1.00) & 6.17 & (0.84) & 6.01 & (0.99) & 6.06 & (0.94) \\ 
   & \multicolumn{10}{c}{$n=390$}\\100 & 8.86 & (0.02) & 6.18 & (0.75) & 15.64 & (0.00) & 6.26 & (0.70) & 6.48 & (0.48) \\ 
  500 & 6.31 & (0.73) & 6.03 & (0.97) & 12.66 & (0.00) & 6.21 & (0.81) & 6.66 & (0.62) \\ 
  1000 & 6.10 & (0.90) & 6.02 & (0.98) & 9.52 & (0.02) & 6.54 & (0.57) & 6.63 & (0.60) \\ 
  1500 & 6.06 & (0.94) & 6.00 & (1.00) & 7.78 & (0.15) & 6.37 & (0.67) & 6.55 & (0.62) \\ 
   \hline
\end{tabular}\vspace{2mm}

\parbox{13cm}{\small
\textit{Note}. For each estimator $\hat r$, this table reports the empirical mean of $\hat r$ and the empirical probability that $\hat r=6$ (in parentheses) based on 1,000 Monte Carlo iterations. 
} 
\end{table}

\begin{table}[p]
\caption{Simulation results when factors are Wiener processes and $L$ is a NTS process with $\alpha=0.50$}
\label{table:bm-alpha=0.5}
\centering
\small
\begin{tabular}{rrrrrrrrrrr}
  \hline
  $d$ & \multicolumn{2}{c}{$\hat r^{BN}_\tau$} & 
\multicolumn{2}{c}{$\hat r^{P,cor}_\tau(\gamma)$} & 
\multicolumn{2}{c}{$PC_{p1}$} & \multicolumn{2}{c}{\citet{Pe19}} & 
\multicolumn{2}{c}{\citet{On10}}\\ \hline & \multicolumn{10}{c}{$n=26$}\\100 & 20.00 & (0.00) & 5.79 & (0.55) & 20.00 & (0.00) & 14.54 & (0.07) & 2.87 & (0.14) \\ 
  500 & 20.00 & (0.00) & 5.91 & (0.88) & 20.00 & (0.00) & 5.90 & (0.86) & 5.15 & (0.67) \\ 
  1000 & 20.00 & (0.00) & 5.96 & (0.94) & 20.00 & (0.00) & 5.90 & (0.90) & 5.60 & (0.81) \\ 
  1500 & 20.00 & (0.00) & 5.97 & (0.95) & 20.00 & (0.00) & 5.88 & (0.88) & 5.74 & (0.86) \\ 
   & \multicolumn{10}{c}{$n=78$}\\100 & 10.90 & (0.00) & 6.00 & (0.77) & 14.10 & (0.00) & 6.19 & (0.70) & 4.93 & (0.57) \\ 
  500 & 9.36 & (0.00) & 6.00 & (0.99) & 9.06 & (0.01) & 6.12 & (0.89) & 6.07 & (0.93) \\ 
  1000 & 9.69 & (0.00) & 6.00 & (1.00) & 6.82 & (0.42) & 6.02 & (0.98) & 6.06 & (0.95) \\ 
  1500 & 10.34 & (0.00) & 6.00 & (1.00) & 6.22 & (0.80) & 6.00 & (0.99) & 6.05 & (0.96) \\ 
   & \multicolumn{10}{c}{$n=390$}\\100 & 8.61 & (0.03) & 6.16 & (0.78) & 14.47 & (0.00) & 6.20 & (0.74) & 6.34 & (0.51) \\ 
  500 & 6.38 & (0.68) & 6.03 & (0.97) & 12.38 & (0.00) & 6.17 & (0.84) & 6.59 & (0.62) \\ 
  1000 & 6.14 & (0.87) & 6.01 & (0.99) & 9.61 & (0.01) & 6.55 & (0.56) & 6.65 & (0.57) \\ 
  1500 & 6.10 & (0.90) & 6.00 & (1.00) & 7.99 & (0.12) & 6.35 & (0.68) & 6.51 & (0.62) \\ 
   \hline
\end{tabular}\vspace{2mm}

\parbox{13cm}{\small
\textit{Note}. For each estimator $\hat r$, this table reports the empirical mean of $\hat r$ and the empirical probability that $\hat r=6$ (in parentheses) based on 1,000 Monte Carlo iterations. 
} 
\end{table}

\begin{table}[p]
\caption{Simulation results when factors are Wiener processes and $L$ is a NTS process with $\alpha=0.75$}
\label{table:bm-alpha=0.75}
\centering
\small
\begin{tabular}{rrrrrrrrrrr}
  \hline
  $d$ & \multicolumn{2}{c}{$\hat r^{BN}_\tau$} & 
\multicolumn{2}{c}{$\hat r^{P,cor}_\tau(\gamma)$} & 
\multicolumn{2}{c}{$PC_{p1}$} & \multicolumn{2}{c}{\citet{Pe19}} & 
\multicolumn{2}{c}{\citet{On10}}\\ \hline & \multicolumn{10}{c}{$n=26$}\\100 & 20.00 & (0.00) & 5.71 & (0.52) & 20.00 & (0.00) & 13.44 & (0.12) & 2.79 & (0.15) \\ 
  500 & 20.00 & (0.00) & 5.92 & (0.88) & 20.00 & (0.00) & 5.88 & (0.86) & 5.14 & (0.65) \\ 
  1000 & 20.00 & (0.00) & 5.95 & (0.94) & 20.00 & (0.00) & 5.86 & (0.87) & 5.51 & (0.78) \\ 
  1500 & 20.00 & (0.00) & 5.94 & (0.94) & 20.00 & (0.00) & 5.83 & (0.84) & 5.74 & (0.85) \\ 
   & \multicolumn{10}{c}{$n=78$}\\100 & 9.83 & (0.00) & 6.01 & (0.81) & 12.48 & (0.00) & 6.08 & (0.77) & 4.88 & (0.59) \\ 
  500 & 8.90 & (0.02) & 6.00 & (1.00) & 8.61 & (0.03) & 6.07 & (0.94) & 6.07 & (0.94) \\ 
  1000 & 9.33 & (0.01) & 6.00 & (1.00) & 6.87 & (0.39) & 6.01 & (0.99) & 6.05 & (0.95) \\ 
  1500 & 9.97 & (0.00) & 6.00 & (1.00) & 6.34 & (0.71) & 6.00 & (1.00) & 6.03 & (0.97) \\ 
   & \multicolumn{10}{c}{$n=390$}\\100 & 7.91 & (0.08) & 6.16 & (0.76) & 12.30 & (0.00) & 6.12 & (0.76) & 6.35 & (0.51) \\ 
  500 & 6.46 & (0.62) & 6.03 & (0.97) & 11.50 & (0.00) & 6.11 & (0.90) & 6.71 & (0.54) \\ 
  1000 & 6.29 & (0.75) & 6.01 & (0.99) & 9.66 & (0.01) & 6.55 & (0.54) & 6.72 & (0.52) \\ 
  1500 & 6.21 & (0.81) & 6.00 & (1.00) & 8.30 & (0.07) & 6.36 & (0.65) & 6.66 & (0.52) \\ 
   \hline
\end{tabular}\vspace{2mm}

\parbox{13cm}{\small
\textit{Note}. For each estimator $\hat r$, this table reports the empirical mean of $\hat r$ and the empirical probability that $\hat r=6$ (in parentheses) based on 1,000 Monte Carlo iterations. 
} 
\end{table}

\begin{figure}[p]
\centering
\caption{Sensitivity of $\hat{r}^{P,cor}_\tau(\gamma)$ to $g(d)$}
\label{fig:perturb}
\includegraphics[scale=1]{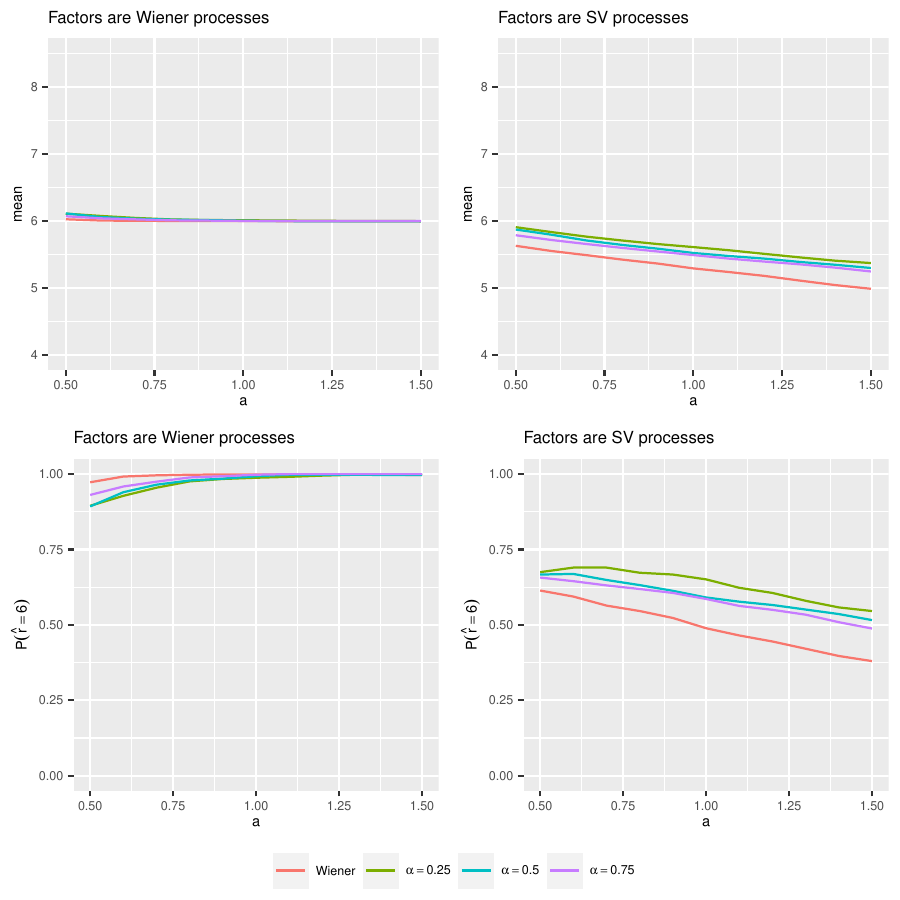}
\medskip

\parbox{15cm}{\small
\textit{Note}. This figure shows the empirical mean of $\hat r^{P,cor}_\tau(\gamma)$ (top panels) and the empirical probability that $\hat r^{P,cor}_\tau(\gamma)=6$ (bottom panels) for various values of $a$ with $g(d)=a\sqrt{\log\log d}$ and $\gamma=0.05$ when $n=78$ and $d=500$. The left panels are for the case that $F$ is an $r$-dimensional Wiener process and the right panels are for the case that $F$ is generated by the stochastic volatility model described in Section \ref{sec:sim-design}. Other settings are the same as in Section \ref{sec:sim-design}. 
Different colors correspond to four different models for the idiosyncratic process $Z$ in the simulation. 
}
\end{figure}

\begin{figure}[p]
\centering
\caption{Sensitivity of $\hat{r}^{P,cor}_\tau(\gamma)$ to $\gamma$}
\label{fig:gamma}
\includegraphics[scale=1]{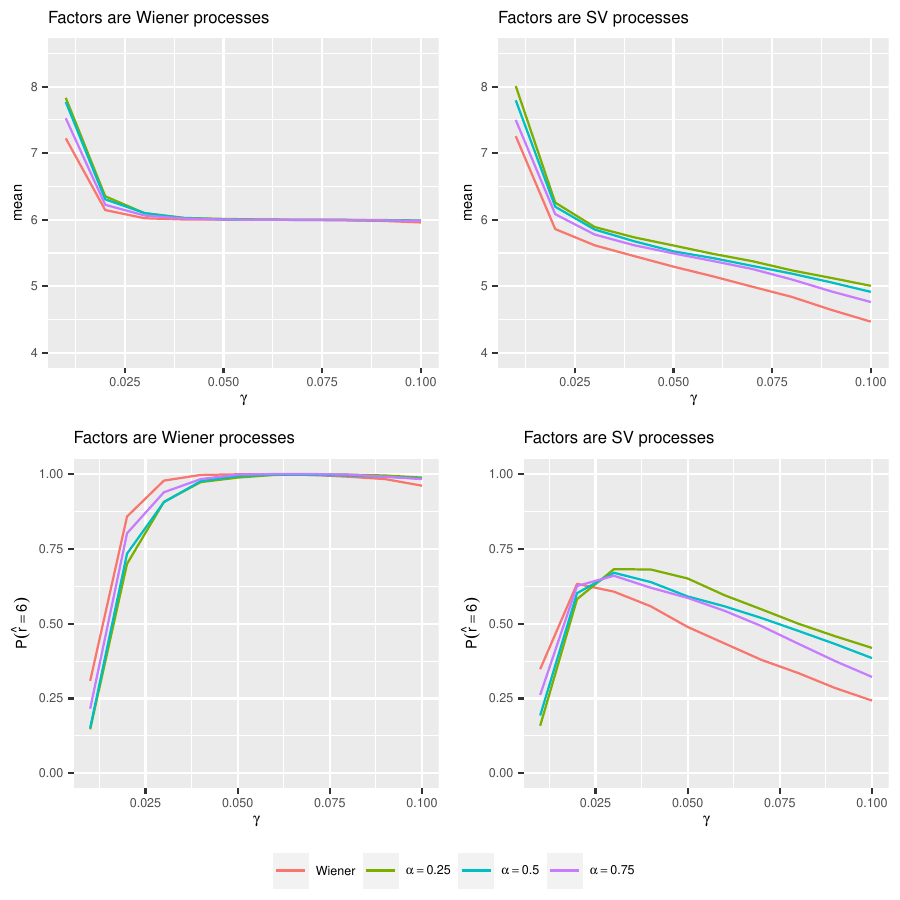}
\medskip

\parbox{15cm}{\small
\textit{Note}. This figure shows the empirical mean of $\hat r^{P,cor}_\tau(\gamma)$ (top panels) and the empirical probability that $\hat r^{P,cor}_\tau(\gamma)=6$ (bottom panels) for various values of $\gamma$ with $g(d)=\sqrt{\log\log d}$ when $n=78$ and $d=500$. The left panels are for the case that $F$ is an $r$-dimensional Wiener process and the right panels are for the case that $F$ is generated by the stochastic volatility model described in Section \ref{sec:sim-design}. Other settings are the same as in Section \ref{sec:sim-design}. 
Different colors correspond to four different models for the idiosyncratic process $Z$ in the simulation. 
}
\end{figure}

\begin{figure}[p]
\centering
\caption{Effect of the signal-to-noise ratio}
\label{fig:snr}
\includegraphics[scale=1]{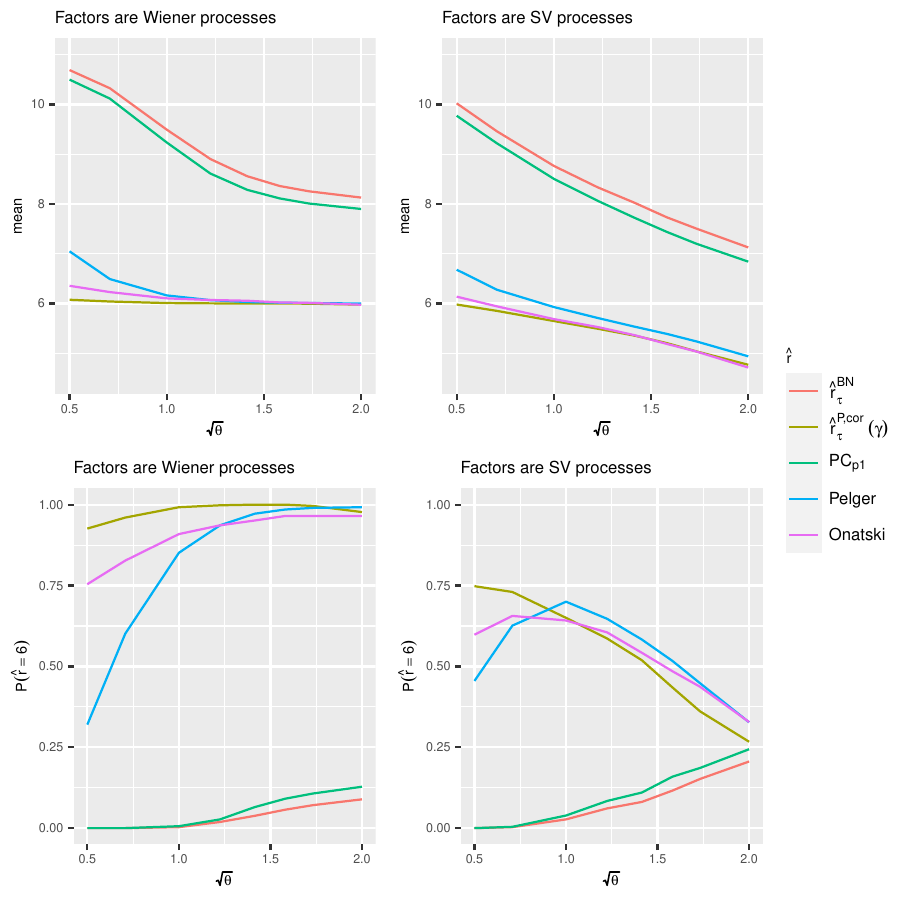}
\medskip

\parbox{15cm}{\small
\textit{Note}. This figure shows the empirical mean of $\hat r$ (top panels) and the empirical probability that $\hat r=6$ (bottom panels) for various values of $\theta$ when $n=78$ and $d=500$. 
The left panels are for the case that $F$ is an $r$-dimensional Wiener process and the right panels are for the case that $F$ is generated by the stochastic volatility model described in Section \ref{sec:sim-design}. 
The idiosyncratic process $Z$ is a $d$-dimensional NTS process with $\alpha=0.75$. 
Other settings are the same as in Section \ref{sec:simulation}. 
Different colors correspond to five different estimators in the simulation. 
}
\end{figure}

\begin{figure}[p]
\centering
\caption{Effect of the cross-sectional correlation of the idiosyncratic process}
\label{fig:phi}
\includegraphics[scale=1]{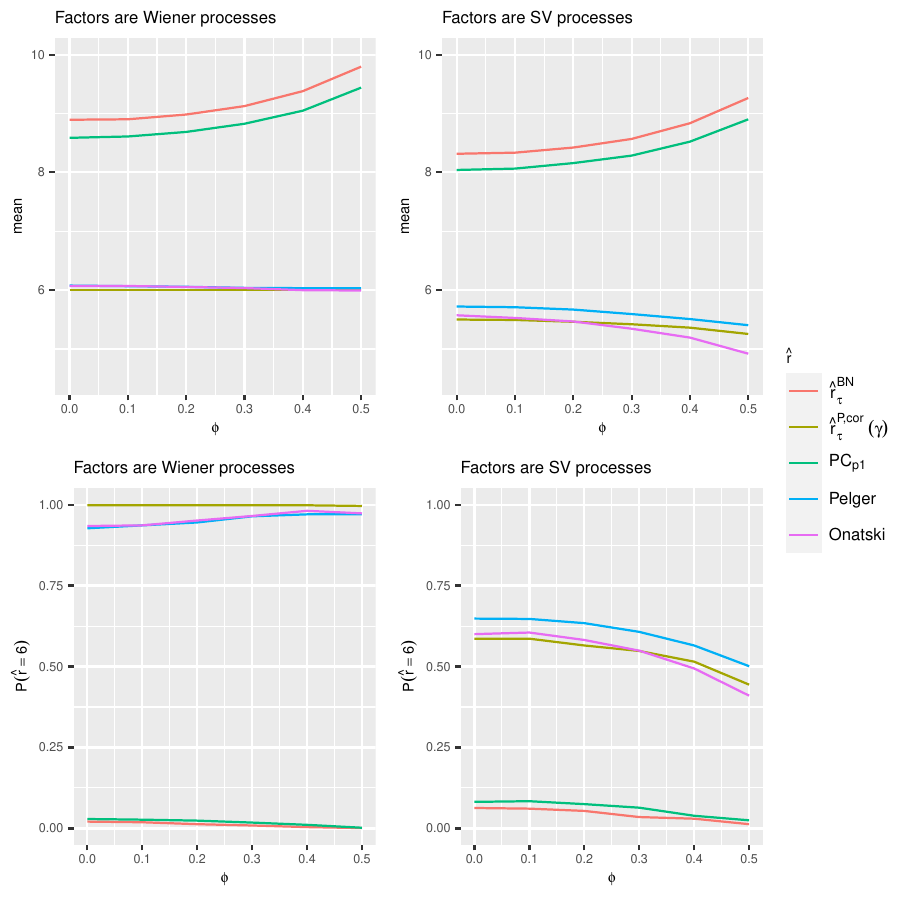}
\medskip

\parbox{15cm}{\small
\textit{Note}. This figure shows the empirical mean of $\hat r$ (top panels) and the empirical probability that $\hat r=6$ (bottom panels) for various values of $\phi$ when $n=78$ and $d=500$. 
The left panels are for the case that $F$ is an $r$-dimensional Wiener process and the right panels are for the case that $F$ is generated by the stochastic volatility model described in Section \ref{sec:sim-design}. 
The idiosyncratic process $Z$ is a $d$-dimensional NTS process with $\alpha=0.75$. 
Other settings are the same as in Section \ref{sec:simulation}. 
Different colors correspond to five different estimators in the simulation. 
}
\end{figure}

\newpage

\paragraph{Acknowledgments}

The author thanks Yuma Uehara for his helpful comments on simulation. 
This work was partly supported by JST CREST Grant Number JPMJCR2115 and JSPS KAKENHI Grant Numbers JP19K13668, JP22H00834, JP22H00889, JP22H01139.

{\small
\renewcommand*{\baselinestretch}{1}\selectfont
\addcontentsline{toc}{section}{References}
\bibliography{base}
}

\end{document}